\newtheorem{lema}{Lemma}[section]
\newtheorem{prop}{Proposition}[section]
\newtheorem{teo}{Theorem}[section]
\newtheorem{cor}{Corollary}[section]
\newtheorem{claim}{Claim}[prop]
\newtheorem{claimsec}{Claim}[section]
\newtheorem{conj}{Conjecture}[section]
\newcommand{\li}[1]{\overline{#1}}
\newcommand{\Li}{\mathcal{L}} 
\title{Chen--Chvátal Conjecture for Graphs of Diameter 3}
\author{Martín Matamala\thanks{DIM-CMM, CNRS-IRL 2807, Universidad de Chile, Chile. E-mail: mar.mat.vas@dim.uchile.cl}
~and 
Luciano Villarroel-Sepúlveda\thanks{DIM, Universidad de Chile, Chile. E-mail: luciano.gabriel.vs@gmail.com}}
\date{December 2025}
\begin{document}

\maketitle

\begin{abstract}
In 2008, Chen and Chvátal conjectured that in every finite metric space of $n$ points, there are at least $n$ distinct lines, or the whole set of points is a line. This is a generalization of a classical result in the Euclidean plane. The Chen--Chvátal conjecture is open even in metric spaces induced by connected graphs. In 2018, it was asked by Chvátal whether graphs of diameter three satisfy the conjecture. In this work, we find all graphs of diameter three having fewer lines than vertices. As a direct consequence, we prove that graphs of diameter three satisfy the Chen--Chvátal conjecture.
\end{abstract}

\section{Introduction}
It is well known that a set of $n$ non--collinear points in the Euclidean plane determines at least $n$ distinct lines. This fact is a corollary of a combinatorial theorem of de Bruijn and Erd\H{o}s \cite{DBE1948}, and it is often referred to as the de Bruijn--Erd\H{o}s theorem, despite having been proved earlier in \cite{erdos4065Sol}. In 2008, Chen and Chvátal conjectured that the de Bruijn--Erd\H{o}s theorem can be extended to arbitrary finite metric spaces with an appropriate definition of line \cite{Chen-Chvatal2008}. Given three distinct points $x,y,z$ in a metric space $(V,d)$, we say that $z$ lies between $x$ and $y$ whenever $d(x,y)=d(x,z)+d(z,y)$, and we denote this by $[xzy]$. In this setting, the line generated by distinct points $x,y\in V$ is defined as:
$$
\overline{xy}=\{x,y\}\cup\{z\in V: [zxy]\, \lor \,   [xzy]\, \lor \,[xyz]\}
$$    
Due to the symmetry of the distance function $d(\cdot,\cdot)$, the line $\li{yx}$ is equal to the line $\li{xy}$. If the whole set of points is a line, we say that the metric space has a universal line. Therefore, the Chen--Chvátal conjecture can be stated as follows.
\begin{conj}[Chen--Chvátal]
Every finite metric space of $n$ points without a universal line determines at least $n$ distinct lines.  
\label{CCconjecture}
\end{conj}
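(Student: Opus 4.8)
The plan is to argue by contradiction: take a finite metric space $(V,d)$ on $n$ points with no universal line and with fewer than $n$ distinct lines, and try to squeeze enough combinatorial structure out of this to reach an impossibility. The natural first move is to imitate the double-counting argument behind the de Bruijn--Erdős theorem. Writing $\mathcal{L}$ for the set of lines and noting that each of the $\binom{n}{2}$ unordered pairs generates some line, the pigeonhole principle tells us that when $|\mathcal{L}| < n$ some line is generated by many pairs, and a line generated by $k$ pairs must contain at least $\sqrt{2k}$ points; so a shortage of lines forces the existence of ``rich'' lines. First I would try to prove the incidence-style inequality that drives the Euclidean proof: for a point $p$ lying off a line $L$, the lines $\overline{pz}$ with $z\in L$ should be pairwise distinct, giving at least $|L|$ lines through $p$. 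Summing such inequalities over all incident and non-incident (point, line) pairs is exactly the mechanism that outputs $|\mathcal{L}|\ge n$.

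The hard part --- and precisely why the conjecture is still open in full generality --- is that this mechanism rests on the axiom that two distinct points lie on a \emph{unique} common line, equivalently that two distinct lines meet in at most one point. In an arbitrary metric space this collapses: betweenness is controlled only by the equality $d(x,y)=d(x,z)+d(z,y)$, so two lines may share arbitrarily many points and the candidate lines $\overline{pz}$ above can coincide. Hence the inequality ``at least $|L|$ lines through $p\notin L$'' is outright false, and the rank/counting backbone of de Bruijn--Erdős has no direct analogue. Finding a robust replacement for this rigidity is the central obstacle, and I do not expect any single global counting identity to supply it.

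To circumvent this I would pursue one of two substitutes. The first is an extremal/minimality analysis: fix a counterexample minimising $n$, delete a carefully chosen point $v$, and track how lines of $V$ restrict to lines of $V\setminus\{v\}$; the trouble is that deletion can simultaneously merge formerly distinct lines and erase betweenness relations, so the induction does not descend cleanly and must be controlled by additional invariants. The second is a structural stratification. Since the statement concerns arbitrary metric spaces, one natural parameter is the number of distinct distances realised in $V$; alternatively one may first restrict to metrics arising from connected graphs and stratify by the graph diameter --- diameters one and two are already understood, and the present paper settles diameter three, so a plausible long-term route is to climb this hierarchy. In every regime the recurring technical core is the same: classify exactly when a metric space has a universal line and, when it does not, exhibit the missing lines explicitly. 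I expect the genuine mathematical content to live there, in the case analysis that forces universality or produces new lines, rather than in any uniform incidence bound.
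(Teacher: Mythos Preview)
The statement you were asked to prove is Conjecture~1.1, the Chen--Chv\'atal conjecture itself. The paper does \emph{not} prove this statement; it is stated as an open conjecture, and the paper's contribution is to establish only the special case of metric spaces induced by connected graphs of diameter three (Theorem~2.1). So there is no ``paper's own proof'' of this statement to compare against.

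Your proposal is not a proof either, and you are aware of this: you explicitly write that the mechanism ``is outright false'' in general metric spaces, that the conjecture ``is still open in full generality,'' and that you ``do not expect any single global counting identity to supply it.'' What you have written is a thoughtful research outline --- the de Bruijn--Erd\H{o}s double count, the failure of the two-points-one-line axiom, minimal-counterexample deletion, stratification by diameter --- but none of these threads is carried to a conclusion, and indeed no one currently knows how to carry them to a conclusion. The stratification-by-diameter route you mention is exactly what the paper pursues for diameter three, but even there the argument is a lengthy case analysis (Sections~3 and~4) that classifies all fourteen diameter-three graphs with fewer lines than vertices and checks that each has a universal line; nothing in your outline approaches that level of concreteness. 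In short: the gap is total, because the target statement is an open problem and your text is a survey of obstructions rather than an argument.
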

It was recently shown that in every metric space with $n$ points and no universal line there are at least $\Omega(n^{2/3})$ lines \cite{Huang2025}, improving the previously best lower bound of $\Omega(\sqrt{n})$ \cite{Aboulker2016Lines}. 

Every connected graph induces a metric space in which the distance between two vertices is the minimum number of edges in a path joining them. We shall refer to the lines of a metric space induced by a connected graph $G=(V,E)$ simply as lines of $G$. The set of lines will be denoted by $\mathcal{L}(G)=\{\overline{xy}:x,y\in V, x\neq y\}$ and the number of lines by $\ell(G)=|\mathcal{L}(G)|$. The Chen--Chvátal conjecture has been widely studied in graph metrics, and several classes of graphs have been shown to satisfy the conjecture \cite{Aboulker2015DistanceHereditary, Beudou2015Chordal, Aboulker2018, Beudou2019Bisplit, Schrader2020q-q4,  Aboulker2022HHFree, Montejano2024, Matamala2025}. A class of connected graphs $\mathcal{C}$ is said to satisfy the Chen--Chvátal conjecture if every graph in $\mathcal{C}$ either has a universal line, or has at least as many lines as vertices.

In 2018, Aboulker, Matamala, Rochet and Zamora found all graphs $G=(V,E)$ in a super class of chordal and distance-hereditary graphs that do not satisfy the following inequality
\begin{equation}
\ell(G)+br(G)\geq |V|  
\label{Eq:CCbridges}
\end{equation}
where $br(G)$ is the number of bridges of $G$ \cite{Aboulker2018}. Since any bridge generates a universal line, connected graphs that satisfy inequality (\ref{Eq:CCbridges}) satisfy the Chen-Chvátal conjecture. In 2020, Matamala and Zamora found all connected bipartite graphs that do not satisfy (\ref{Eq:CCbridges}) \cite{Matamala2020}. In recent works, a similar question has been studied. Given a class of connected graphs $\mathcal{C}$, which graphs $G=(V,E)$ in $\mathcal{C}$ do not satisfy that $\ell(G)\geq |V|$ ? Answering this question will shed light on whether the class $\mathcal{C}$ satisfies the Chen--Chvátal conjecture or not, however, this problem is also of intrinsic interest. The answer may help deepen the understanding of configurations that determine a low number of lines in graphs, regardless of whether the graph has a universal line or bridges. In 2025, Matamala, Peña and Zamora found all locally connected graphs that have fewer lines than vertices \cite{Matamala2025} and most recently Matamala found all graphs of diameter two having fewer lines than vertices \cite{Matamala2025Diam2}. In this work, we find all graphs of diameter three with fewer lines than vertices. As a direct consequence, we obtain that graphs of diameter three satisfy the Chen--Chvátal conjecture. This, in fact, corresponds to Problem 2 in Chvátal's 2018 survey \cite{Chvatal2018Sur}, which, in the best of our knowledge, remained open until now. In 2024, it was shown by Montejano that graphs of diameter three with girth and minimum degree at least four have at least as many lines as vertices \cite{Montejano2024}.  It is also known that graphs of diameter three with $n$ vertices have $\Omega(n^{4/3})$ lines \cite{Aboulker2016Lines} and therefore that the number of graphs of diameter three with fewer lines than vertices is finite up to isomorphism. It turns out that there are only 14 such graphs, up to isomorphism.

\section{Preliminaries}\label{sec:Prelim}
In this section we present some useful lemmas that will help the reader become familiar with lines in graphs of diameter three. We also present 14 graphs of diameter three having less lines than vertices and offer some intuition about the main result.
Given a connected graph $G=(V,E)$ and three distinct vertices $x,y,z$, the relation of lying in between has the following characterization. $[xzy]$ if and only if there exists an $x$-$y$ path of length $d(x,y)$ containing $z$. From the definition of line, we also have that $[xyz] \Longrightarrow x\in  \overline{yz}\,\wedge\, y\in \overline{xz} \,\wedge\, z\in \overline{xy}$. These facts will be useful in determining which vertices belong to a given line. Some lines in graphs are particularly easy to determine. For example, the line generated by a bridge consists of the whole set of vertices. The line generated by vertices whose distance equals the diameter of the graph also admits a simple description. In the following lemma, we state this explicitly for graphs of diameter three.

\begin{lema}
Let $G=(V,E)$ be a graph of diameter 3 and $x,y\in V$
such that $d(x,y)=3$, then
$$
\overline{xy}= \overline{yx} = \{x,y\}\cup\{z\in V: [xzy]\}
$$ 
\label{LemmaLineDiam}
\end{lema}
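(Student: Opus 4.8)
The plan is to work directly from the definition of the line and to observe that, when $d(x,y)=3$ equals the diameter of $G$, two of the three betweenness conditions appearing in the definition of $\overline{xy}$ can never be satisfied. Recall that
$$\overline{xy}=\{x,y\}\cup\{z\in V:[zxy]\lor[xzy]\lor[xyz]\},$$
so it suffices to prove that for every $z\in V\setminus\{x,y\}$ neither $[xyz]$ nor $[zxy]$ holds; the condition $[xzy]$ then accounts for all of $\overline{xy}\setminus\{x,y\}$, which is exactly the claimed description.

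First I would rule out $[xyz]$. By definition this would mean $d(x,z)=d(x,y)+d(y,z)=3+d(y,z)$, and since $z\neq y$ we have $d(y,z)\geq 1$, hence $d(x,z)\geq 4$, contradicting that $G$ has diameter $3$. Symmetrically, $[zxy]$ would mean $d(z,y)=d(z,x)+d(x,y)=d(z,x)+3\geq 4$, again impossible. Thus the only betweenness relation among $x,y,z$ that can hold is $[xzy]$, which yields $\overline{xy}=\{x,y\}\cup\{z\in V:[xzy]\}$.

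Finally, the equality $\overline{xy}=\overline{yx}$ is immediate from the symmetry of $d$ already noted after the definition of the line, and the predicate $[xzy]$, i.e. $d(x,y)=d(x,z)+d(z,y)$, is visibly symmetric in $x$ and $y$, so the set on the right-hand side is unchanged under swapping $x$ and $y$. I do not expect any real obstacle here: the entire argument is a one-line application of the triangle inequality together with the diameter bound, and the only point needing a (trivial) bit of care is separating off the degenerate cases $z=x$ and $z=y$, which are absorbed by the explicit $\{x,y\}$ term present in both descriptions of the line.
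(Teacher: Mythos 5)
Your argument is correct and is precisely the routine argument the paper has in mind (the paper omits the proof of this lemma as straightforward): since $d(x,y)=3$ equals the diameter, both $[xyz]$ and $[zxy]$ would force a distance of at least $4$, so only $[xzy]$ can contribute. Nothing further is needed.
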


We now introduce some useful notation. For a set $U$ and $x\in U$, we denote $U\setminus\{x\}$ by $U-x$, and for $y\notin U$, we denote $U\cup\{y\}$ by $U+y$. Consider a graph $G=(V,E)$ and $x\in V$. The neighborhood at distance $i\geq 1$ of $x$ will be denoted by $N^i(x)=\{y\in V: d(x,y)=i\}$. The neighborhood of $x$, namely $N^1(x)$, will be simply denoted by $N(x)$. The degree of the vertex $x$ is defined as $d(x)=|N(x)|$. We define the set of lines $\Li^x=\{\li{xy}:y\in V-x\}$ and for $i\geq 1$, we define $\Li^x_i=\{\li{xy}:y\in N^i(x)\}$. These last types of lines will be of special interest when $i\in \{2,3\}$ due to the following Lemma.

\begin{lema}
Let $G=(V,E)$ be a graph of diameter 3, $x\in V$ and $i\in \{2,3\}$. For distinct vertices $y,y'\in N^i(x)$, we have that $y'\notin \li{xy}$ and hence $\li{xy}\neq \li{xy'}$. In particular, for $y\in N^i(x)$, it holds that $\li{xy}\cap N^i(x)=\{y\}$. Moreover, for $A\subseteq N^i(x)$, we have that $|\{\li{xy}:y\in A\}|=|A|$.
\label{LemmaNi}
\end{lema}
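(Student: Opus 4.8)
The plan is to expand the definition of the line $\li{xy}$ and to rule out, one betweenness relation at a time, every vertex $y'\in N^i(x)$ with $y'\neq y$. By definition, $y'\in\li{xy}$ exactly when one of $[y'xy]$, $[xy'y]$, $[xyy']$ holds. Write $i=d(x,y)=d(x,y')$. First I would dispose of $[xy'y]$: it reads $d(x,y)=d(x,y')+d(y',y)$, hence $d(y',y)=0$, contradicting $y\neq y'$; symmetrically, $[xyy']$ forces $d(y,y')=0$, again impossible. The only remaining relation is $[y'xy]$, which reads $d(y',y)=d(y',x)+d(x,y)=2i$, so $d(y',y)\in\{4,6\}$, and this cannot occur in a graph of diameter $3$. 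Therefore $y'\notin\li{xy}$.

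Everything else is an immediate consequence. Since a generating vertex always lies on its own line, $y'\in\li{xy'}$ while $y'\notin\li{xy}$, so $\li{xy}\neq\li{xy'}$. Intersecting with $N^i(x)$: we have $y\in\li{xy}\cap N^i(x)$, and by the previous paragraph no other vertex of $N^i(x)$ lies in $\li{xy}$, so $\li{xy}\cap N^i(x)=\{y\}$. Finally, for $A\subseteq N^i(x)$ the map $y\mapsto\li{xy}$ is injective on $A$ by the pairwise distinctness just shown, which gives $|\{\li{xy}:y\in A\}|=|A|$.

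I do not expect any genuine obstacle here: the whole argument is essentially a three-line case analysis on the betweenness relations. The only place where the hypothesis on the diameter is actually used is to discard $[y'xy]$, which would otherwise produce two vertices at distance $2i\ge 4$; note that the proof is uniform in $i\in\{2,3\}$ and does not require Lemma~\ref{LemmaLineDiam}.
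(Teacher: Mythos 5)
Your proof is correct and is precisely the straightforward case analysis on the three betweenness relations that the paper has in mind when it omits the proofs of Lemmas 2.1--2.4 as routine. No issues.
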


The last statement of Lemma \ref{LemmaNi} will be constantly used to count lines. We now state two further lemmas that together with Lemma \ref{LemmaNi} will be especially useful in determining which vertices do not belong to a given line.

\begin{lema}
Let $G=(V,E)$ be a graph of diameter 3. Consider distinct vertices $x,y,z\in V$ such that $d(z,x)=d(z,y)\in \{2,3\}$, then $z\notin \li{xy}$.
\label{Lemmadxdy}    
\end{lema}    

\begin{lema}
Let $G=(V,E)$ be a graph of diameter 3. Let $S$ a subset of $V$. If $S$ is independent or complete, then for distinct vertices $x,y\in S$ we have that $\li{xy}\cap S=\{x,y\}$.
\label{LemmaIndCom}
\end{lema}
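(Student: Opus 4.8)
The plan is to argue by cases according to whether $S$ is complete or independent. Fix distinct $x,y\in S$; since $x,y\in\li{xy}$ always holds, it suffices to show that no third vertex $z\in S\setminus\{x,y\}$ lies on $\li{xy}$, that is, that none of the betweenness relations $[zxy]$, $[xzy]$, $[xyz]$ holds. In each case I would simply read off the pairwise distances among $x,y,z$ that are forced by the structure of $S$ and check that they are incompatible with every one of these three identities.

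For the complete case, every pair of vertices of $S$ is at distance $1$, so $d(x,y)=d(x,z)=d(z,y)=1$. Each of $[zxy]$, $[xzy]$, $[xyz]$ asserts that one of these distances equals the sum of the other two, hence equals $2$, which is impossible; therefore $z\notin\li{xy}$. Note that this case does not even use the diameter hypothesis.

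For the independent case, no two vertices of $S$ are adjacent, so $d(x,y)$, $d(x,z)$ and $d(z,y)$ are all at least $2$. Then $[xzy]$ would give $d(x,y)=d(x,z)+d(z,y)\geq 4$, contradicting that the graph has diameter $3$; and $[zxy]$, $[xyz]$ are excluded in exactly the same way, since each of them forces one of the three pairwise distances among $x,y,z$ to be at least $4$. Hence $z\notin\li{xy}$, and in both cases we conclude $\li{xy}\cap S=\{x,y\}$.

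Since the whole argument is a direct unwinding of the definition of a line together with the triangle-type identities defining betweenness, I do not expect a genuine obstacle here; the only step requiring a little care is to verify all three betweenness patterns in the independent case and to observe that the bound $\mathrm{diam}(G)=3$ is precisely what rules them out.
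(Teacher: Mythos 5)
Your proof is correct and is exactly the straightforward argument the paper has in mind (the paper explicitly omits the proofs of Lemmas 2.1--2.4 as routine): in the complete case all three pairwise distances equal $1$ so no betweenness identity can hold, and in the independent case all three are at least $2$ so any betweenness identity would force a distance of at least $4$, contradicting diameter $3$. Your observation that the diameter hypothesis is only needed in the independent case is also accurate.
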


The proofs of these four lemmas are straightforward and are therefore omitted. These lemmas will be used repeatedly throughout our proofs, particularly to establish containment and non--containment of vertices in lines, to show that given lines are distinct, and to count lines. To illustrate their use, we include explicit references to these lemmas in the introduction of Section \ref{sec:Rnonempty}. From Section \ref{sec:R1nonempty} onward, we omit such references to avoid overloading the text with repetitive arguments.  The reader is encouraged to keep these lemmas in mind and to revisit them whenever an argument of this type is unclear.

We will now define some graphs of diameter three having less lines than vertices.
\begin{figure}[H]
    \centering
    \begin{subfigure}[t]{0.15\textwidth}
        \centering
        \resizebox{0.75\textwidth}{!}{\tikzset{every picture/.style={line width=0.75pt}} 

\begin{tikzpicture}[x=0.75pt,y=0.75pt,yscale=-1,xscale=1]

\draw  [fill={rgb, 255:red, 0; green, 0; blue, 0 }  ,fill opacity=1 ] (270,115) .. controls (270,112.24) and (272.24,110) .. (275,110) .. controls (277.76,110) and (280,112.24) .. (280,115) .. controls (280,117.76) and (277.76,120) .. (275,120) .. controls (272.24,120) and (270,117.76) .. (270,115) -- cycle ;
\draw  [fill={rgb, 255:red, 0; green, 0; blue, 0 }  ,fill opacity=1 ] (300,75) .. controls (300,72.24) and (302.24,70) .. (305,70) .. controls (307.76,70) and (310,72.24) .. (310,75) .. controls (310,77.76) and (307.76,80) .. (305,80) .. controls (302.24,80) and (300,77.76) .. (300,75) -- cycle ;
\draw  [fill={rgb, 255:red, 0; green, 0; blue, 0 }  ,fill opacity=1 ] (240,75) .. controls (240,72.24) and (242.24,70) .. (245,70) .. controls (247.76,70) and (250,72.24) .. (250,75) .. controls (250,77.76) and (247.76,80) .. (245,80) .. controls (242.24,80) and (240,77.76) .. (240,75) -- cycle ;
\draw  [fill={rgb, 255:red, 0; green, 0; blue, 0 }  ,fill opacity=1 ] (270,35) .. controls (270,32.24) and (272.24,30) .. (275,30) .. controls (277.76,30) and (280,32.24) .. (280,35) .. controls (280,37.76) and (277.76,40) .. (275,40) .. controls (272.24,40) and (270,37.76) .. (270,35) -- cycle ;
\draw    (275,35) -- (245,75) ;
\draw  [fill={rgb, 255:red, 0; green, 0; blue, 0 }  ,fill opacity=1 ] (270,165) .. controls (270,162.24) and (272.24,160) .. (275,160) .. controls (277.76,160) and (280,162.24) .. (280,165) .. controls (280,167.76) and (277.76,170) .. (275,170) .. controls (272.24,170) and (270,167.76) .. (270,165) -- cycle ;
\draw    (275,35) -- (305,75) ;
\draw    (245,75) -- (275,115) ;
\draw    (305,75) -- (275,115) ;
\draw    (275,115) -- (275,165) ;

\end{tikzpicture}}
        \caption{}
        \label{fig:a}
    \end{subfigure}
    \hspace{0.01\textwidth}
    \begin{subfigure}[t]{0.15\textwidth}
        \centering
        \resizebox{0.75\textwidth}{!}{\tikzset{every picture/.style={line width=0.75pt}} 

\begin{tikzpicture}[x=0.75pt,y=0.75pt,yscale=-1,xscale=1]

\draw  [fill={rgb, 255:red, 0; green, 0; blue, 0 }  ,fill opacity=1 ] (290,135) .. controls (290,132.24) and (292.24,130) .. (295,130) .. controls (297.76,130) and (300,132.24) .. (300,135) .. controls (300,137.76) and (297.76,140) .. (295,140) .. controls (292.24,140) and (290,137.76) .. (290,135) -- cycle ;
\draw  [fill={rgb, 255:red, 0; green, 0; blue, 0 }  ,fill opacity=1 ] (320,95) .. controls (320,92.24) and (322.24,90) .. (325,90) .. controls (327.76,90) and (330,92.24) .. (330,95) .. controls (330,97.76) and (327.76,100) .. (325,100) .. controls (322.24,100) and (320,97.76) .. (320,95) -- cycle ;
\draw  [fill={rgb, 255:red, 0; green, 0; blue, 0 }  ,fill opacity=1 ] (260,95) .. controls (260,92.24) and (262.24,90) .. (265,90) .. controls (267.76,90) and (270,92.24) .. (270,95) .. controls (270,97.76) and (267.76,100) .. (265,100) .. controls (262.24,100) and (260,97.76) .. (260,95) -- cycle ;
\draw  [fill={rgb, 255:red, 0; green, 0; blue, 0 }  ,fill opacity=1 ] (290,55) .. controls (290,52.24) and (292.24,50) .. (295,50) .. controls (297.76,50) and (300,52.24) .. (300,55) .. controls (300,57.76) and (297.76,60) .. (295,60) .. controls (292.24,60) and (290,57.76) .. (290,55) -- cycle ;
\draw    (295,55) -- (265,95) ;
\draw  [fill={rgb, 255:red, 0; green, 0; blue, 0 }  ,fill opacity=1 ] (290,185) .. controls (290,182.24) and (292.24,180) .. (295,180) .. controls (297.76,180) and (300,182.24) .. (300,185) .. controls (300,187.76) and (297.76,190) .. (295,190) .. controls (292.24,190) and (290,187.76) .. (290,185) -- cycle ;
\draw    (295,55) -- (325,95) ;
\draw    (265,95) -- (295,135) ;
\draw    (325,95) -- (295,135) ;
\draw    (295,135) -- (295,185) ;
\draw    (325,95) -- (265,95) ;

\end{tikzpicture}}
        \caption{}
        \label{fig:b}
    \end{subfigure}
    \hspace{0.01\textwidth}
    \begin{subfigure}[t]{0.15\textwidth}
        \centering
        \resizebox{0.75\textwidth}{!}{\tikzset{every picture/.style={line width=0.75pt}} 

\begin{tikzpicture}[x=0.75pt,y=0.75pt,yscale=-1,xscale=1]

\draw  [fill={rgb, 255:red, 0; green, 0; blue, 0 }  ,fill opacity=1 ] (280,165) .. controls (280,162.24) and (282.24,160) .. (285,160) .. controls (287.76,160) and (290,162.24) .. (290,165) .. controls (290,167.76) and (287.76,170) .. (285,170) .. controls (282.24,170) and (280,167.76) .. (280,165) -- cycle ;
\draw  [fill={rgb, 255:red, 0; green, 0; blue, 0 }  ,fill opacity=1 ] (340,115) .. controls (340,112.24) and (342.24,110) .. (345,110) .. controls (347.76,110) and (350,112.24) .. (350,115) .. controls (350,117.76) and (347.76,120) .. (345,120) .. controls (342.24,120) and (340,117.76) .. (340,115) -- cycle ;
\draw  [fill={rgb, 255:red, 0; green, 0; blue, 0 }  ,fill opacity=1 ] (280,115) .. controls (280,112.24) and (282.24,110) .. (285,110) .. controls (287.76,110) and (290,112.24) .. (290,115) .. controls (290,117.76) and (287.76,120) .. (285,120) .. controls (282.24,120) and (280,117.76) .. (280,115) -- cycle ;
\draw  [fill={rgb, 255:red, 0; green, 0; blue, 0 }  ,fill opacity=1 ] (310,75) .. controls (310,72.24) and (312.24,70) .. (315,70) .. controls (317.76,70) and (320,72.24) .. (320,75) .. controls (320,77.76) and (317.76,80) .. (315,80) .. controls (312.24,80) and (310,77.76) .. (310,75) -- cycle ;
\draw    (315,75) -- (285,115) ;
\draw    (315,75) -- (345,115) ;
\draw    (285,115) -- (285,165) ;
\draw    (345,115) -- (285,115) ;
\draw  [fill={rgb, 255:red, 0; green, 0; blue, 0 }  ,fill opacity=1 ] (280,215) .. controls (280,212.24) and (282.24,210) .. (285,210) .. controls (287.76,210) and (290,212.24) .. (290,215) .. controls (290,217.76) and (287.76,220) .. (285,220) .. controls (282.24,220) and (280,217.76) .. (280,215) -- cycle ;
\draw    (285,165) -- (285,215) ;

\end{tikzpicture}}
        \caption{}
        \label{fig:c}
    \end{subfigure}
    \hspace{0.01\textwidth}
    \begin{subfigure}[t]{0.15\textwidth}
        \centering
        \resizebox{0.75\textwidth}{!}{\tikzset{every picture/.style={line width=0.75pt}} 

\begin{tikzpicture}[x=0.75pt,y=0.75pt,yscale=-1,xscale=1]

\draw  [fill={rgb, 255:red, 0; green, 0; blue, 0 }  ,fill opacity=1 ] (250,195) .. controls (250,192.24) and (252.24,190) .. (255,190) .. controls (257.76,190) and (260,192.24) .. (260,195) .. controls (260,197.76) and (257.76,200) .. (255,200) .. controls (252.24,200) and (250,197.76) .. (250,195) -- cycle ;
\draw  [fill={rgb, 255:red, 0; green, 0; blue, 0 }  ,fill opacity=1 ] (310,145) .. controls (310,142.24) and (312.24,140) .. (315,140) .. controls (317.76,140) and (320,142.24) .. (320,145) .. controls (320,147.76) and (317.76,150) .. (315,150) .. controls (312.24,150) and (310,147.76) .. (310,145) -- cycle ;
\draw  [fill={rgb, 255:red, 0; green, 0; blue, 0 }  ,fill opacity=1 ] (250,145) .. controls (250,142.24) and (252.24,140) .. (255,140) .. controls (257.76,140) and (260,142.24) .. (260,145) .. controls (260,147.76) and (257.76,150) .. (255,150) .. controls (252.24,150) and (250,147.76) .. (250,145) -- cycle ;
\draw  [fill={rgb, 255:red, 0; green, 0; blue, 0 }  ,fill opacity=1 ] (280,105) .. controls (280,102.24) and (282.24,100) .. (285,100) .. controls (287.76,100) and (290,102.24) .. (290,105) .. controls (290,107.76) and (287.76,110) .. (285,110) .. controls (282.24,110) and (280,107.76) .. (280,105) -- cycle ;
\draw    (285,105) -- (255,145) ;
\draw    (285,105) -- (315,145) ;
\draw    (255,145) -- (255,195) ;
\draw  [fill={rgb, 255:red, 0; green, 0; blue, 0 }  ,fill opacity=1 ] (280,55) .. controls (280,52.24) and (282.24,50) .. (285,50) .. controls (287.76,50) and (290,52.24) .. (290,55) .. controls (290,57.76) and (287.76,60) .. (285,60) .. controls (282.24,60) and (280,57.76) .. (280,55) -- cycle ;
\draw    (285,55) -- (285,105) ;

\end{tikzpicture}}
        \caption{}
        \label{fig:d}
    \end{subfigure}
    \caption{}
    \label{fig:fewlinesA10}
\end{figure}
Let $G_a,G_b,G_c$ and $G_d$ be the graphs $(a),(b),(c)$ and $(d)$ from Figure \ref{fig:fewlinesA10}, respectively. These four graphs have diameter three and four lines, hence they all have less lines than vertices. These facts can be easily verified by the reader. Given $p, p_1,...,p_q\in  \mathbb{N}\setminus\{0\}$ such that $\sum_{i=1}^qp_i\leq p$, we define the graph $M_{p,p_1,...,p_q}$ given by joining a complete graph $K_p$ with the complete graphs $K_{p_1},...,K_{p_q}$ by a matching $M\subseteq E(K_p,\bigcup_{i=1}^q K_{p_i})$ of size $\sum_{i=1}^qp_i$. We also define the graph $M'_{2p}$ given by joining two complete graphs $K_p$ by a matching of size $p-1$.  In the following two propositions, we count the number of lines of graphs in the two latter families. Furthermore, we make explicit which graphs belonging to them have less lines than vertices. In a graph belonging to any of the two families, the line generated by an edge in the matching is universal, therefore both families satisfy the Chen--Chvátal conjecture. The proofs of Propositions \ref{PropFam1} and \ref{PropFam2} are routine and are thus omitted.

\begin{prop}
Given $p\geq 3$ and  $p_1,...,p_q\in  \mathbb{N}\setminus\{0\}$ with $q\geq 2$ such that $\sum_{i=1}^qp_i\leq p$, the graph $M_{p,p_1,...,p_q}$ has diameter 3 and 
$$
\ell(M_{p,p_1,...,p_q})=\binom{p}{2}+1
$$
In particular, 

{\small 
$$
\ell(M_{p,p_1,...,p_q})<|V(M_{p,p_1,...,p_q})| \Longleftrightarrow M_{p,p_1,...,p_q}\in \{M_{3,1,1}, M_{3,1,1,1}, M_{3,2,1}, M_{4,1,1,1,1}, M_{4,2,1,1}, M_{4,2,2},M_{4,3,1}\}
$$}
\label{PropFam1}
\end{prop}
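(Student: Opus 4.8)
The plan is to work directly with the structure of $M:=M_{p,p_1,\dots,p_q}$. Write $A=\{a_1,\dots,a_p\}$ for the vertex set of the central $K_p$, write $B_i$ for the vertex set of the $i$-th pendant $K_{p_i}$, and put $B=\bigcup_{i=1}^{q}B_i$. Since the matching has size $\sum_{i=1}^{q}p_i=|B|$ and thus saturates $B$, every $b\in B$ has a unique neighbor $m(b)\in A$ in the matching, and $b\mapsto m(b)$ is an injection $B\to A$; for $a\in A$ we write $m(a)$ for its matched partner in $B$ whenever $a$ is saturated. The first step is a routine distance computation: all distances inside $A$ and inside each $B_i$ equal $1$; for $a\in A$ and $b\in B_i$ one has $d(a,b)=1$ if $a=m(b)$ and $d(a,b)=2$ otherwise, realized by $a-m(b)-b$; and for $b\in B_i$, $b'\in B_j$ with $i\neq j$ one has $d(b,b')=3$, realized \emph{only} by the path $b-m(b)-m(b')-b'$ (the sole neighbor of $b$ closer to $b'$ is $m(b)$, and from $m(b)$ the sole neighbor adjacent to $b'$ is $m(b')$). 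In particular $M$ has diameter $3$ — here the hypothesis $q\geq 2$ is used — and the uniqueness just noted lets Lemma~\ref{LemmaLineDiam} turn the distance-$3$ lines into explicit finite sets.

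The core of the argument is a case analysis on the generating pair. For $a,a'\in A$: Lemma~\ref{LemmaIndCom} applied to $S=A$ gives $\overline{aa'}\cap A=\{a,a'\}$, and checking the betweenness relations with the distance table above yields $\overline{aa'}=\{a,a'\}\cup\{m(a),m(a')\}$, where an $m(\cdot)$ term is dropped if undefined; in particular $|\overline{aa'}|\leq 4$, and distinct pairs $\{a,a'\}$ have distinct traces on $A$, so these are $\binom{p}{2}$ pairwise distinct lines. For $a\in A$ and $b\in B_i$ with $a\neq m(b)$ (so $d(a,b)=2$): computing the common neighbors of $a$ and $b$ and discarding the remaining betweenness possibilities gives $\overline{ab}=\{a,b,m(b)\}\cup\{m(a)\}$, where the last term appears exactly when $a$ is saturated; since $m(m(b))=b$, this set is precisely the line $\overline{a\,m(b)}$ already counted. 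For $b,b'\in B_i$: Lemma~\ref{LemmaIndCom} gives $\overline{bb'}\cap B_i=\{b,b'\}$, and a short check gives $\overline{bb'}=\{b,b',m(b),m(b')\}=\overline{m(b)\,m(b')}$. For $b\in B_i$, $b'\in B_j$ with $i\neq j$: by Lemma~\ref{LemmaLineDiam} and the uniqueness of the shortest $b$--$b'$ path, $\overline{bb'}=\{b,m(b),m(b'),b'\}=\overline{m(b)\,m(b')}$. Finally, for a matching edge $\{a,b\}$ one verifies $d(z,a)\neq d(z,b)$ for every $z\notin\{a,b\}$, splitting into the cases $z\in A$, $z\in B_i$, and $z\in B_j$ with $j\neq i$; hence $\overline{ab}=V$ is a universal line. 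Therefore $\mathcal{L}(M)$ consists exactly of the $\binom{p}{2}$ first-type lines together with the universal line, and since $|V|=p+\sum_{i=1}^{q}p_i\geq 5$ while each first-type line has at most $4$ vertices, the universal line is new; this proves $\ell(M)=\binom{p}{2}+1$.

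It remains to determine when $\binom{p}{2}+1<|V|=p+\sum_{i=1}^{q}p_i$. Since $\sum_{i=1}^{q}p_i\leq p$, the inequality forces $\binom{p}{2}+1<2p$, i.e. $p^2-5p+2<0$, hence $p\in\{3,4\}$ given $p\geq 3$. If $p=3$ the inequality reads $\sum_i p_i\geq 2$, which combined with $q\geq 2$ and $\sum_i p_i\leq 3$ leaves exactly the multisets $\{1,1\}$, $\{1,1,1\}$, $\{2,1\}$, i.e. the graphs $M_{3,1,1}$, $M_{3,1,1,1}$, $M_{3,2,1}$. If $p=4$ it reads $\sum_i p_i\geq 4$, hence $\sum_i p_i=4$ with $q\geq 2$, leaving $\{1,1,1,1\}$, $\{2,1,1\}$, $\{2,2\}$, $\{3,1\}$, i.e. $M_{4,1,1,1,1}$, $M_{4,2,1,1}$, $M_{4,2,2}$, $M_{4,3,1}$. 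Collecting these seven graphs proves the stated equivalence.

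None of these steps is deep — which is why the paper omits the proof — but the bookkeeping in the second step is where care is needed. The point most prone to error is the subcase split for the distance-$2$ lines $\overline{ab}$ (according to whether $a$ is saturated, and if so whether $m(a)$ lies in $B_i$ or in another pendant clique), together with the verification that the exhibited shortest paths between vertices at distance $3$ are genuinely unique, since it is exactly that uniqueness that makes Lemma~\ref{LemmaLineDiam} pin the distance-$3$ lines down to four-element sets.
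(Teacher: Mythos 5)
Your proof is correct: the distance table, the classification of all five types of generating pairs (with the distance-$2$ and distance-$3$ lines collapsing onto the lines $\overline{m(b)\,m(b')}$ and the matching edges giving the universal line), and the final inequality analysis all check out. The paper omits this proof as routine, and what you have written is precisely the routine verification the authors had in mind, so there is nothing to compare beyond noting that your argument fills the stated gap correctly.
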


\begin{prop}
Given $p\geq 3$, the graph $M'_{2p}$ has diameter 3 and 
$$
\ell(M'_{2p})=\binom{p}{2}+1
$$
In particular,
$$
\ell(M'_{2p})<|V(M'_{2p})|\Longleftrightarrow M'_{2p}\in \{M'_6,M'_8\}
$$
\label{PropFam2}
\end{prop}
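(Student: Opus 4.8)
The plan is to fix explicit notation, read off all pairwise distances, and then determine every line by a short case analysis organized by the automorphism orbits of $M'_{2p}$. Write $A$ and $B$ for the two cliques $K_p$, let $a_1b_1,\dots,a_{p-1}b_{p-1}$ be the matching edges, and let $a^{*}\in A$, $b^{*}\in B$ be the two unmatched vertices. The distances are immediate: any two vertices inside $A$ (resp.\ inside $B$) are at distance $1$; $d(a_i,b_i)=1$; $d(a_i,b_j)=d(a_i,b^{*})=d(a^{*},b_j)=2$ whenever $i\neq j$; and $d(a^{*},b^{*})=3$, since $a^{*}$ and $b^{*}$ have disjoint neighbourhoods, hence no common neighbour, while $a^{*}a_ib_ib^{*}$ is a path. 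In particular $M'_{2p}$ has diameter $3$, which settles the first claim.

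Since $M'_{2p}$ has an automorphism permuting the matched pairs arbitrarily and one exchanging $A$ with $B$, an unordered pair of distinct vertices falls into one of six orbits: $\{a_i,a_j\}$, $\{a_i,a^{*}\}$, $\{a_i,b_i\}$, $\{a_i,b_j\}$ with $i\neq j$, $\{a_i,b^{*}\}$, and $\{a^{*},b^{*}\}$. I would compute the line of a representative of each orbit directly from the distance table, using Lemma \ref{LemmaIndCom} to discard the remaining vertices of a clique, the betweenness relations $[b_ia_ia_j]$, $[b_ia_ia^{*}]$, $[b^{*}a_ia^{*}]$ (and their mirrors) to include the relevant matched and unmatched partners, and Lemma \ref{Lemmadxdy} to exclude any vertex at distance $2$ from both endpoints. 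This yields $\overline{a_ia_j}=\overline{b_ib_j}=\overline{a_ib_j}=\{a_i,a_j,b_i,b_j\}$ for $i\neq j$, and $\overline{a_ia^{*}}=\overline{b_ib^{*}}=\overline{a^{*}b_i}=\overline{a_ib^{*}}=\{a_i,a^{*},b_i,b^{*}\}$; moreover every matching edge $a_ib_i$ generates the universal line (since $d(a_i,b_i)=1$, every other vertex $z$ satisfies $[za_ib_i]$ or $[a_ib_iz]$), and by Lemma \ref{LemmaLineDiam} the diametral pair gives $\overline{a^{*}b^{*}}=\{a^{*},b^{*}\}\cup\{z:[a^{*}zb^{*}]\}=V$, because the $a^{*}$--$b^{*}$ geodesics are exactly the paths $a^{*}a_ib_ib^{*}$.

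It then remains to count distinct lines. Apart from the universal line $V$, every line obtained is of the form $\overline{a_ia_j}$, reading $a^{*}$ for $a_j$ when $j=*$; such a line satisfies $\overline{a_ia_j}\cap A=\{a_i,a_j\}$, and since a line of this form is determined by its intersection with $A$, the assignment $L\mapsto L\cap A$ is a bijection onto the set of $2$-subsets of $A$. Hence these lines are pairwise distinct and number exactly $\binom{p}{2}$, and all of them differ from $V$ since $|V|=2p>4$ for $p\ge 3$. Therefore $\ell(M'_{2p})=\binom{p}{2}+1$. The equivalence then follows by solving $\binom{p}{2}+1<2p$: this holds for $p\in\{3,4\}$ and fails for every $p\ge 5$, where $\binom{p}{2}\ge 2p$, so $\ell(M'_{2p})<|V(M'_{2p})|$ exactly when $M'_{2p}\in\{M'_6,M'_8\}$.

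The only part that requires genuine care is the bookkeeping in the second step: one must make sure the distance-$2$ cross pairs $\{a_i,b_j\}$ and $\{a_i,b^{*}\}$ produce no line beyond those already found, and that the universal line is counted only once despite being generated by the $p-1$ matching edges and by the pair $\{a^{*},b^{*}\}$. Everything else is a routine distance computation, which is presumably why the authors omit it.
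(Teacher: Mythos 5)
Your proof is correct. The paper omits the proof of Proposition \ref{PropFam2} as routine, and your argument is exactly the direct computation the authors have in mind: the distance table, the orbit-by-orbit determination of lines (yielding the universal line together with the $\binom{p}{2}$ lines of the form $\{a_i,a_j,b_i,b_j\}$ and $\{a_i,a^{*},b_i,b^{*}\}$, distinguished by their intersections with $A$), and the elementary inequality $\binom{p}{2}+1<2p$ holding precisely for $p\in\{3,4\}$.
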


We now define the family $\mathcal{F}$, consisting of 14 graphs.

$$
\mathcal{F}=\{G_a,G_b,G_c,G_d,M_{2,1,1}, M_{3,1,1}, M_{3,1,1,1}, M_{3,2,1}, M_{4,1,1,1,1}, M_{4,2,1,1}, M_{4,2,2},M_{4,3,1},M'_6,M'_8\}
$$

$M_{2,1,1}$ is the path of length three, which has only one line, therefore all graphs in $\mathcal{F}$ have diameter three and less lines than vertices. Hence, we have established the backward implication of the following theorem.

\begin{teo}
Let $G=(V,E)$ be a graph of diameter 3. Then, $\ell(G)<|V|$ if and only if $G$ is isomorphic to one of the graphs in $\mathcal{F}$.   
\label{MainTheorem}
\end{teo}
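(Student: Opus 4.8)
The plan is to prove the forward implication: if $G=(V,E)$ has diameter $3$ and $\ell(G)<|V|$, then $G$ is isomorphic to a member of $\mathcal{F}$. I will fix a diametral pair $x,y$ (so $d(x,y)=3$) and argue with the distance layers $N^1(x),N^2(x),N^3(x)$. The core will be a counting scheme. By Lemma \ref{LemmaNi} the lines in $\Li^x_2\cup\Li^x_3$ comprise $|N^2(x)|$ pairwise distinct lines and $|N^3(x)|$ pairwise distinct lines, while by Lemma \ref{LemmaLineDiam} each line $\overline{xw}$ with $w\in N^3(x)$ is the ``small'' set $\{x,w\}\cup\{z:[xzw]\}$. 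Since $|V|=1+|N^1(x)|+|N^2(x)|+|N^3(x)|$, to reach the contradiction $\ell(G)\geq|V|$ it will suffice to produce about $|N^1(x)|+1$ further distinct lines not among the $\overline{xw}$ with $w\in N^2(x)\cup N^3(x)$. The natural suppliers are the edges at $x$ --- for $z\in N^1(x)$ the triangle inequality gives $\overline{xz}=V\setminus\{u\neq x,z:\,d(u,x)=d(u,z)\}$, so these lines are completely transparent --- together with the lines generated inside $N^1(x)$. Lemmas \ref{Lemmadxdy} and \ref{LemmaIndCom} will be the workhorses for certifying that the collected lines are new and pairwise distinct.

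\textbf{Case organization.} The whole analysis will be driven by the auxiliary set $R=V\setminus\overline{xy}=\{r\in V:\,d(x,r)+d(r,y)\geq 4\}$ of vertices lying off every $x$-$y$ geodesic; note that $R=\emptyset$ says precisely that $\overline{xy}$ is a universal line. I would first dispose of the case $R\neq\emptyset$ (Section \ref{sec:Rnonempty}) and then the case $R=\emptyset$. When $R\neq\emptyset$ one refines the situation according to the subset $R_1\subseteq R$ of vertices of $R$ adjacent to $x$ or to $y$ (Section \ref{sec:R1nonempty}), and so on down the hierarchy of auxiliary sets. In each branch one of two outcomes should occur: either (i) the candidate families above already furnish at least $|V|$ distinct lines, contradicting $\ell(G)<|V|$; or (ii) the hypotheses pin down the adjacencies of $G$ so tightly that $G$ is determined up to isomorphism, and a direct check identifies it as $G_a,G_b,G_c,G_d$ or as a small member of the matching families $M_{p,p_1,\dots,p_q}$ or $M'_{2p}$, whose line counts are recorded in Propositions \ref{PropFam1} and \ref{PropFam2}. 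To keep the casework finite I would choose $x$ extremally among diametral vertices (for instance so as to maximize $|N^3(x)|$), which eliminates many configurations at once.

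\textbf{Expected main obstacle.} The difficulty will lie entirely in the breadth of the case analysis rather than in any single decisive step: I must rule out every diameter-$3$ graph that is ``large but not quite large enough,'' and be sure that no sporadic graph on five, six, seven, or eight vertices escapes. Two technical irritants will recur and have to be managed. First, the baseline count $|N^2(x)|+|N^3(x)|$ can be eroded by coincidences $\overline{xz}=\overline{xw}$ with $z$ in a lower layer than $w$ (say $z\in N^1(x)$, $w\in N^2(x)$, or $z\in N^2(x)$, $w\in N^3(x)$); such an equality forces $z\sim w$ and strong metric restrictions, and every lost line must be paid for by an extra line found elsewhere. Second, since the statement is up to isomorphism and a vertex of $N^2(x)$ may itself have eccentricity $3$, the layer picture around $x$ is not canonical, so I have to make sure the dichotomies are genuinely exhaustive whichever diametral vertex is taken as $x$. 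Once $G$ has been squeezed down to at most eight vertices with a rigid adjacency pattern, the endgame will be a direct, if lengthy, enumeration, with Propositions \ref{PropFam1} and \ref{PropFam2} serving as templates for recognizing the two matching families.
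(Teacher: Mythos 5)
There is a genuine gap: what you have written is a roadmap that defers essentially all of the substance of the proof to steps you only name as ``irritants to be managed.'' The decisive difficulty is the one you flag in your first ``technical irritant'': the baseline count $|N^2(x)|+|N^3(x)|$ is not available when $\Li^x_2\cap\Li^x_3\neq\emptyset$, i.e.\ when $\overline{xv}=\overline{xw}$ for some $v\in N^2(x)$, $w\in N^3(x)$. You say every lost line ``must be paid for by an extra line found elsewhere'' but give no mechanism for finding it, and this is exactly where all the work lies. The paper's entire apparatus is built around this point: the case split is on whether some vertex $x$ has $\Li^x_2\cap\Li^x_3\neq\emptyset$ (that is the paper's set $R$ --- note it is not your $R=V\setminus\overline{xy}$), and in that case one extracts a bijection $f:A(x)\to B(x)$ between the offending vertices of $N^3(x)$ and $N^2(x)$, shows $E(A(x),N^2(x))$ is a perfect matching on $A(x)$, and then manufactures the replacement lines explicitly from families such as $\{\overline{wu}:w\in A(x),\,u\in N(x)\}$ and $\{\overline{w^*v}:v\in N^2(x)\}$, proving pairwise distinctness each time. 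None of that is visible in your plan, and without it the inequality $\ell(G)\geq|V|$ cannot be launched.

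Two further concrete problems. First, your case organization on $R=V\setminus\overline{xy}$ does not separate the hard configurations: $R=\emptyset$ merely says $\overline{xy}$ is universal, and several of the exceptional graphs (e.g.\ $G_a$, $M'_6$, $M'_8$) have a universal diametral line while others (e.g.\ $M_{3,1,1}$) do not, so both of your branches still contain all the difficulty and neither yields structure by itself. The paper's $R=\emptyset$ case (no vertex has a repetition between layers $2$ and $3$) by contrast forces a rigid symmetric structure between $x$ and a vertex at distance $3$, leading directly to $M'_{2p}$. Second, your claim that the lines $\overline{xz}$ for $z\in N(x)$ together with lines inside $N(x)$ supply the missing $|N(x)|+1$ distinct lines is unsupported and false in general: in the graphs $M_{p,p_1,\dots,p_q}$ every matched neighbor $z$ of a matched vertex $x$ gives $\overline{xz}=V$, and these lines collapse onto one another and onto lines from other layers. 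Until you specify how repeated lines across layers are compensated and verify distinctness of the compensating lines, the argument does not get off the ground.
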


Since all graphs in $\mathcal{F}$ have a universal line, it follows that all graphs of diameter three satisfy the Chen--Chvátal conjecture. In what remains of this paper, we will be mainly devoted to proving the forward implication of Theorem \ref{MainTheorem}. For this purpose, we will consider a graph $G$ with diameter three and less lines than vertices and we shall prove that it is isomorphic to one of the graphs in $\mathcal{F}$. The corollary of the following proposition will give us some insight into how to approach the problem. 

\begin{prop}
Let $G=(V,E)$ be a graph and $x\in V$. If $|\Li^x|\geq 2$, then $\ell(G)\geq |\Li^x|+1$.
\end{prop}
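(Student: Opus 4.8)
The plan is to show that the family of lines through $x$ together with at least one line \emph{not} through $x$ witnesses the bound $\ell(G)\ge|\Li^x|+1$. So it suffices to exhibit a single line of $G$ that is not of the form $\li{xy}$ for any $y\in V-x$. I would argue by contradiction: assume $\Li^x=\Li(G)$, i.e.\ every line of $G$ passes through $x$, and derive that $|\Li^x|\le 1$, contradicting the hypothesis $|\Li^x|\ge 2$.

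The key observation is that $x$ cannot lie in the interior of any line unless that line is universal. More precisely, suppose $\Li^x=\Li(G)$ and take any two distinct vertices $u,v\in V-x$ with $u\ne v$; the line $\li{uv}$ must equal $\li{xy}$ for some $y$, and in particular $x\in\li{uv}$. First I would handle the case where $x$ is adjacent to, or ``between'' things in a controlled way; the cleanest route is: since $x\in\li{uv}$ for \emph{every} pair $u,v$, one shows every vertex is forced into a very rigid position relative to $x$ — intuitively $x$ must be a ``central'' vertex lying on a shortest path between (almost) every pair, which collapses the distance structure. Concretely, pick $u,v$ realizing a diameter-type configuration and use $x\in\li{uv}$ to get a betweenness relation; iterating this over all pairs squeezes the graph so that $\li{xy}$ takes at most one value, or else produces a second, distinct line avoiding $x$.

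A more robust way to organize the contradiction, which I would actually write up: if $|\Li^x|\ge 2$ there exist $y_1,y_2\in V-x$ with $\li{xy_1}\ne\li{xy_2}$; then there is a vertex $w$ in the symmetric difference, say $w\in\li{xy_1}\setminus\li{xy_2}$. Now consider the line $\li{wy_2}$ (or $\li{y_1y_2}$, whichever is forced to avoid $x$ by a betweenness/Lemma argument): using the betweenness characterization and the standard implication $[xyz]\Rightarrow x\in\li{yz}\wedge y\in\li{xz}\wedge z\in\li{xy}$, one checks that $x\notin\li{wy_2}$, because if $x$ were in it we would get a betweenness relation among $x,w,y_2$ that places $w\in\li{xy_2}$, contradicting the choice of $w$. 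Hence $\li{wy_2}\notin\Li^x$, giving the extra line and the bound $\ell(G)\ge|\Li^x|+1$.

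The main obstacle is the case analysis in the last step: $x\in\li{wy_2}$ can arise from three betweenness configurations, $[xwy_2]$, $[wxy_2]$, or $[wy_2x]$, and I need to rule each out using the chosen properties of $w$ (namely $w\in\li{xy_1}$ and $w\notin\li{xy_2}$) — the configuration $[wxy_2]$ is the delicate one, since it does not immediately involve $\li{xy_2}$, so I would likely need to further split on how $w$ sits in $\li{xy_1}$ (one of $[wxy_1]$, $[xwy_1]$, $[xy_1w]$) and combine with the triangle inequality to reach a contradiction. Getting a single choice of $w$ (and possibly swapping the roles of $y_1,y_2$) that makes all cases go through cleanly is where the real work lies; everything else is bookkeeping with the preliminary lemmas.
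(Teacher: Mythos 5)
Your proposal is correct and is essentially the paper's own argument: both proofs produce a vertex outside some line of $\Li^x$ (the paper takes $z\notin\li{xy}$ for a non-universal $\li{xy}\in\Li^x$, you take $w$ in the symmetric difference of two distinct lines of $\Li^x$) and then conclude that the line through that vertex and the other generator avoids $x$. The case analysis you flag as the ``real work'' is actually immediate: for distinct $w,x,y_2$ the three betweenness conditions defining $x\in\li{wy_2}$ are literally the same three conditions defining $w\in\li{xy_2}$ (in particular $[wxy_2]$ is one of the defining clauses of $w\in\li{xy_2}$), so $w\notin\li{xy_2}$ gives $x\notin\li{wy_2}$ with no further splitting.
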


\begin{proof}
Consider a graph $G=(V,E)$ and $x\in V$ such that $|\Li^x|\geq 2$. Since all lines in $\Li^x$ contain $x$, it suffices to find a line that does not contain $x$. Note that there must exist a line in $\Li^x$ different from $V$. Let $y\in V-x$ such that $\li{xy}\neq V$ and consider $z\notin \li{xy}$. It follows that $x\notin \li{yz}$, hence $\ell(G)\geq |\Li^x|+1$.    
\end{proof}

\begin{cor}
Let $G=(V,E)$ be a graph such that $|V|\geq 3$ and $\ell(G)<|V|$.  Then $\forall \,x\in V$, $|\Li^x|<|V|-1$.  
\end{cor}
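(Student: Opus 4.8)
The plan is to obtain this statement as an immediate consequence of the preceding proposition, by contraposition. The first observation is purely combinatorial: since $\Li^x=\{\li{xy}:y\in V-x\}$ is the image of the $(|V|-1)$-element set $V-x$ under the map $y\mapsto\li{xy}$, one always has $|\Li^x|\leq |V|-1$. Hence, to prove $|\Li^x|<|V|-1$ it suffices to rule out the equality $|\Li^x|=|V|-1$.

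So I would argue by contradiction: suppose $|\Li^x|=|V|-1$ for some $x\in V$. Because $|V|\geq 3$, this forces $|\Li^x|\geq 2$, which is exactly the hypothesis of the proposition; applying it yields $\ell(G)\geq |\Li^x|+1=|V|$, contradicting the assumption $\ell(G)<|V|$. This closes the argument.

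There is essentially no obstacle here, since the entire content lies in the proposition already established. The only subtlety worth flagging is the role of the hypothesis $|V|\geq 3$: it is precisely what upgrades $|\Li^x|=|V|-1$ into $|\Li^x|\geq 2$, so that the proposition becomes applicable. Indeed, for $|V|=2$ the statement fails, as the single edge $K_2$ satisfies $\ell(K_2)=1<2=|V|$ while $|\Li^x|=1=|V|-1$. Conceptually, the corollary tells us that for every vertex $x$ the assignment $y\mapsto\li{xy}$ is not injective on $V-x$, a structural fact that will be exploited later when analyzing graphs of diameter three with few lines.
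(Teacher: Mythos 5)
Your proof is correct and is exactly the argument the paper intends (the paper states the corollary without proof, as an immediate consequence of the preceding proposition). The observation that $|V|\geq 3$ is what makes the proposition applicable when $|\Li^x|=|V|-1$, together with the $K_2$ counterexample for $|V|=2$, is a nice sanity check but does not change the argument.
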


The fact that for any vertex $x$ in a graph $G=(V,E)$ of diameter three and less lines than vertices, it holds that $|\Li^x|<|V|-1$,  motivates the study of the types of line repetitions in $\Li^x$ for some $x\in V$. That is, which structures appear in the graph when $\li{xy}=\li{xz}$, for different vertices $y,z\in V-x$.  In this spirit, the proof of the main theorem will consider the case where there exists a vertex $x\in V$ such that $\Li^x_2\cap \Li^x_3\neq \emptyset$ and the case where for every vertex $x\in V$, it holds that $\Li^x_2\cap \Li^x_3=\emptyset$.
 
For the rest of this paper, let $G=(V,E)$ be a graph of diameter 3 with less lines than vertices. We denote $n=|V|$, that is, $\ell(G)\leq n-1$. We define 
$$R=\{x\in V: \Li^x_2\cap \Li^x_3\neq \emptyset\} \textrm{  and }R_1=\{x\in R:d(x)=1\}.$$

For $x\in V$, we also define $A_1(x)$ as the following set.
$$A_1(x)=\{w\in N^3(x): \li{xw}\in \Li^x_2\cap \Li^w_2\}.$$

We observe that the graphs $G_a$, $G_b$, $G_c$ and $G_d$ satisfy that $A_1(x)=\emptyset$ for every vertex $x$. In Section \ref{sec:Rnonempty} we will prove that if $R\neq\emptyset$, then $G$ is isomorphic to one of the following graphs: $M_{2,1,1}$, $M_{3,1,1}$, $M_{3,1,1,1}$, $M_{3,2,1}$, $M_{4,1,1,1,1}$, $M_{4,2,1,1}$, $M_{4,3,1}$, $M_{4,2,2}$ if there exists $x\in V$ with $A_1(x)\neq \emptyset$, or to one of the following graphs: $G_a$, $G_b$, $G_c$, $G_d$ otherwise. In Section \ref{sec:Rempty} we will prove that if $R=\emptyset$, then $G$ is isomorphic to $M'_{6}$ or $M'_{8}$. The proofs of these results mainly consist in identifying useful types of line repetitions, counting a substantial number of lines, and gradually deriving structural properties of $G$. In the case where $R=\emptyset$, we are able to find a complete description of $G$ by exploiting a symmetry found in the graph. Given a vertex $x$, we will use the letters $u$, $v$ and $w$ to denote vertices in $N(x), N^2(x)$ and $N^3(x)$, respectively.

\section{Proof of the main theorem: Case 1}\label{sec:Rnonempty}

In this section, we assume that  $R$ is nonempty. We will first define, for a vertex $x$, sets that capture two types of line repetitions in $\Li^x_2\cap \Li^x_3$. For $x\in V$, we define $A(x)=\{w\in N^3(x):\li{xw}\in \Li^x_2\}$ and $B(x)=\{v\in N^2(x):\li{xv}\in \Li ^x_3\}$. We also define $A_1(x)=\{w\in A(x): \exists \, u\in N(x) \text{ such that }\li{wu}=\li{xw}\}$ and $A_2(x)=A(x)\setminus A_1(x)$. Observe that the definition of $A_1(x)$ coincides with the one given in Section \ref{sec:Prelim}. 
We also note that $R=\{x\in V: A(x)\neq \emptyset\}$.

\begin{claimsec}
For $x\in V$, the function $f: A(x)\to B(x)$ given by $f(w)=v\in N^2(x)$ such that $\li{xv}=\li{xw}$ is well defined and bijective. Furthermore, $E(A(x),N^2(x))=E(B(x),N^3(x))=\{wf(w):w\in A(x)\}$.\label{ClaimRnonemptyfunc}
\end{claimsec}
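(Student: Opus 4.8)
The plan is to read off every assertion from the three ``uniqueness'' facts packaged in Lemma~\ref{LemmaNi} (the $i=2$ instance and the $i=3$ instance) together with the description of a line at distance the diameter from Lemma~\ref{LemmaLineDiam}. First I would pin down the map $f$. For $w\in A(x)$ there is, by definition of $A(x)$, some $v\in N^2(x)$ with $\li{xv}=\li{xw}$; the $i=2$ instance of Lemma~\ref{LemmaNi} says no two distinct vertices of $N^2(x)$ generate the same line through $x$, so this $v$ is unique and $f(w):=v$ is well defined. Since $\li{xf(w)}=\li{xw}$ and $w\in N^3(x)$, this line lies in $\Li^x_3$, so $f(w)\in B(x)$ and $f$ really maps $A(x)$ into $B(x)$. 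Injectivity follows from the $i=3$ instance of Lemma~\ref{LemmaNi}: $f(w)=f(w')$ gives $\li{xw}=\li{xf(w)}=\li{xw'}$, forcing $w=w'$. For surjectivity, given $v\in B(x)$ there is $w\in N^3(x)$ with $\li{xw}=\li{xv}$; then $\li{xw}\in\Li^x_2$, so $w\in A(x)$, and $f(w)=v$ by the uniqueness just used.

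For the edge statement I would prove a double inclusion. To get $\{wf(w):w\in A(x)\}\subseteq E(A(x),N^2(x))\cap E(B(x),N^3(x))$: fix $w\in A(x)$ and set $v=f(w)$; then $v\in\li{xv}=\li{xw}$ and $v\notin\{x,w\}$, so Lemma~\ref{LemmaLineDiam} gives $[xvw]$, whence $d(v,w)=d(x,w)-d(x,v)=1$, i.e.\ $vw\in E$; since $w\in A(x)\subseteq N^3(x)$ and $v=f(w)\in B(x)\subseteq N^2(x)$, this edge lies in both $E(A(x),N^2(x))$ and $E(B(x),N^3(x))$. Conversely, if $w\in A(x)$, $v\in N^2(x)$ and $vw\in E$, then $d(x,v)+d(v,w)=2+1=3=d(x,w)$ gives $[xvw]$, so $v\in\li{xw}=\li{xf(w)}$; since $\li{xf(w)}\cap N^2(x)=\{f(w)\}$ by Lemma~\ref{LemmaNi}, we get $v=f(w)$, hence $vw=wf(w)$. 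Symmetrically, if $v\in B(x)$, $w\in N^3(x)$ and $vw\in E$, then $[xvw]$ holds, so $w\in\li{xv}$ by the implication $[xvw]\Rightarrow w\in\li{xv}$ noted in Section~\ref{sec:Prelim}; writing $\li{xv}=\li{xw'}$ with $w'\in N^3(x)$, the $i=3$ instance of Lemma~\ref{LemmaNi} forces $w'=w$, so $\li{xw}=\li{xv}\in\Li^x_2$, giving $w\in A(x)$ and $f(w)=v$, i.e.\ $vw=wf(w)$.

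I do not expect a genuine obstacle; the whole argument is bookkeeping with betweenness. The only points where care is needed are invoking the correct instance of Lemma~\ref{LemmaNi} — the $i=2$ version forbids a second vertex of $N^2(x)$ on a line through $x$, the $i=3$ version a second vertex of $N^3(x)$ — and noticing that the last inclusion genuinely needs the implication ``$[xvw]\Rightarrow w\in\li{xv}$'' from the preliminaries (to pass from the edge $vw$ to the line equality $\li{xv}=\li{xw}$), not merely $v\in\li{xw}$.
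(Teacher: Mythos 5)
Your proposal is correct and follows essentially the same route as the paper: well-definedness and bijectivity from the two instances of Lemma~\ref{LemmaNi}, the edge $wf(w)$ from Lemma~\ref{LemmaLineDiam}, and the exclusion of all other edges in $E(A(x),N^2(x))$ and $E(B(x),N^3(x))$ by noting they would land in $\li{xw}\setminus\li{xf(w)}=\emptyset$ (respectively $\li{xv}\setminus\li{xf^{-1}(v)}=\emptyset$). Your version merely spells out the betweenness bookkeeping that the paper compresses into set inclusions.
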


\begin{proof} 
Fix $x\in V$ and let $w\in A(x)$. By definition there exists $v\in N^2(x)$ such that $\li{xv}=\li{xw}$. It is immediate that $v\in B(x)$ and by Lemma \ref{LemmaNi}, that this $v$ is unique, hence $f$ is well defined. The bijectivity follows directly from Lemma \ref{LemmaNi}. We now prove that $E(A(x),N^2(x))=E(B(x),N^3(x))=\{wf(w):w\in A(x)\}$, noting that for $w\in A(x)$, it holds that $f(w)\in \li{xw}\cap N^2(x)$, thus by Lemma \ref{LemmaLineDiam} we obtain that $wf(w)\in E$. By Lemma \ref{LemmaNi}, for $w\in A(x)$ and $v\in B(x)$ we have that $\left (N(w)\cap N^2(x) \right)\setminus\{f(w)\}\subseteq \li{xw}\setminus \li{xf(w)}=\emptyset$ and  $\left ( N(v)\cap N^3(x) \right) \setminus \{f^{-1}(v)\}\subseteq \li{xv}\setminus \li{xf^{-1}(v)}=\emptyset$, hence the desired equalities hold. 
\end{proof}

For $x\in V$, we define $B_1(x)=f(A_1(x))$ and $B_2(x)=f(A_2(x))$, observing that $B(x)=B_1(x)\cup B_2(x)$. Let $x\in R$. Claim \ref{ClaimRnonemptyfunc} implies that for $w\in A(x)$ and $u\in N^2(w)\cap N(x)$, it holds that  $f(w)\in N(u)\cap \li{wu}$.
We now consider the sets of lines $\Li^x_2,\Li^x_3,L_{A_2}=\{\li{wu}:w\in A_2(x),\, u\in N^2(w)\cap N(x)\}$, and we note that all the lines in $L_{A_2}$ contain $x$. Although the function $f$ from Claim \ref{ClaimRnonemptyfunc} and the set of lines $L_{A_2}$ depend on $x$, we will omit this dependence in the notation, as it will be clear from the context.

\begin{claimsec}
For $x\in R$,  $L_{A_2}$ is disjoint from $\Li^x_2$ and $\Li^x_3$. Moreover, $|L_{A_2}|=\sum_{w\in A_2(x)}|N^2(w)\cap N(x)|\geq |A_2(x)|$.
\end{claimsec}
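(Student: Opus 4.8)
The plan is to treat the two assertions in turn. For the disjointness, fix a line $\li{wu}\in L_{A_2}$ with $w\in A_2(x)$ and $u\in N^2(w)\cap N(x)$, and suppose toward a contradiction that $\li{wu}=\li{xz}$ for some $z\in N^2(x)\cup N^3(x)$. Since $w\in\li{wu}=\li{xz}$ and $w\in N^3(x)$, I would argue that $z=w$: if $z\in N^3(x)$ this is immediate from Lemma \ref{LemmaNi}; if $z\in N^2(x)$, the betweenness relations $[wxz]$ and $[xwz]$ are immediately impossible, while $[xzw]$ forces $d(z,w)=1$, so, since by Claim \ref{ClaimRnonemptyfunc} the only neighbour of $w$ in $N^2(x)$ is $f(w)$, we get $z=f(w)$ and hence $\li{xz}=\li{xf(w)}=\li{xw}$ (as $w\in A(x)$). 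In either case $\li{wu}=\li{xw}$ with $u\in N(x)$, i.e.\ $w\in A_1(x)$, contradicting $w\in A_2(x)$.

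For the cardinality equality, observe that $L_{A_2}$ is by definition the image of the map $(w,u)\mapsto\li{wu}$ on the set of pairs with $w\in A_2(x)$ and $u\in N^2(w)\cap N(x)$, so it suffices to show this map is injective; the formula then follows, and since each $w\in A_2(x)\subseteq N^3(x)$ admits a shortest path from $x$ to $w$ whose first internal vertex lies in $N^2(w)\cap N(x)$, every term in the sum is at least $1$, giving $\geq|A_2(x)|$. So suppose $\li{wu}=\li{w'u'}=:\ell$ with both pairs of the required form. Recall $d(w,x)=3=d(w,u)+d(u,x)$, so $x\in\ell$, and recall (from the remark after Claim \ref{ClaimRnonemptyfunc}) that $f(w)$ is a common neighbour of $w$ and $u$ lying on $\li{wu}$, so $f(w)\in\ell$; likewise $f(w')\in\ell$. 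I would first prove $w=w'$: from $w'\in\ell=\li{wu}$, $w'\in N^3(x)$ and $d(w,u)=2$, a short analysis of which betweenness relation $w'$ satisfies in $\li{wu}$---using that every neighbour of $u$ lies within distance $2$ of $x$ and so cannot be in $N^3(x)$---shows that either $w'=w$ or $w'\in N(w)$ with $d(w',u)=3$. In the latter case I would locate $f(w)$ inside $\ell=\li{w'u'}$: of the three betweenness possibilities for $f(w)$, two place $f(w)$ in $N(w')\cap N^2(x)$, which equals $\{f(w')\}$ by Claim \ref{ClaimRnonemptyfunc}, forcing $f(w)=f(w')$ and hence $w=w'$ by injectivity of $f$; the third forces $d(w',f(w))=3$, contradicting $d(w',f(w))\leq d(w',w)+d(w,f(w))=2$. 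Hence $w=w'$. Then $\li{wu}=\li{wu'}$ with $d(w,u)=d(w,u')=2$ forces $u=u'$, since $[uwu']$ would give $d(u,u')=4$ while $[wuu']$ and $[wu'u]$ each give $d(u,u')=0$.

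The main obstacle is the step $w=w'$. Because the generators satisfy $d(w,u)=2$, the line $\li{wu}$ is not as rigidly controlled as the diameter lines handled by Lemma \ref{LemmaNi}, and a priori $\ell$ could contain a second vertex of $N^3(x)$; such a vertex would have to be adjacent to $w$, and ruling this out is precisely where one must pin down the location of $f(w)$ within $\li{w'u'}$ and invoke the description $E(A(x),N^2(x))=\{w''f(w''):w''\in A(x)\}$ from Claim \ref{ClaimRnonemptyfunc}. The remaining parts---the two disjointness statements and the determination of $u$---are routine betweenness bookkeeping under the diameter-$3$ constraint.
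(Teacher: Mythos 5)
Your proof is correct. The disjointness argument and the reduction of the counting statement to injectivity of the map $(w,u)\mapsto\li{wu}$ match the paper's strategy, and both proofs ultimately rest on the same structural input, namely the description of $E(A(x),N^2(x))$ in Claim \ref{ClaimRnonemptyfunc}. The one place where you genuinely diverge is the hard case of injectivity, distinct $w,w'$ with $d(w',u)=3$: the paper exhibits the single witness $f(w')$, observing that $\{w,u,f(w')\}$ is independent so that $f(w')\notin\li{wu}$ by Lemma \ref{LemmaIndCom} while $f(w')\in\li{w'u'}$, and is done; you instead first deduce $w'\in N(w)$ from $w'\in\li{wu}$ and then run a betweenness trichotomy on the position of $f(w)$ inside $\li{w'u'}$, forcing $f(w)=f(w')$ and hence $w=w'$ by injectivity of $f$. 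The paper's witness argument is shorter and never needs the intermediate fact $w'\in N(w)$ (it also dispatches the easy case $d(w',u)=2$ in one line by applying Lemma \ref{LemmaNi} at $u$, which you effectively reprove by hand); your version costs a few more case distinctions but uses nothing beyond the betweenness trichotomy and the matching structure, so both routes are sound and of comparable depth.
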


\begin{proof}
Let $x\in R$ and $l=\li{wu}\in L_{A_2}$, i.e., $w\in A_2(x)$ and $u\in N^2(w)\cap N(x)$. We have that $w,f(w)\in l$, hence, by Lemma \ref{LemmaNi}, $l$ can only be equal to $\li{xf(w)}$ in $\Li^x_2$ and to $\li{xw}$ in $\Li^x_3$. By definition of $A_2(x)$, $l\neq \li{xw}=\li{xf(w)}$, therefore it holds that $l\notin \Li^x_2\cup \Li^x_3$, and we conclude that $L_{A_2}$ is disjoint from $\Li^x_2$ and $\Li^x_3$. By Lemma \ref{LemmaNi}, for $w\in A_2(x)$ it holds that $|\{\li{wu}:u\in N^2(w)\cap N(x)\}|=|N^2(w)\cap N(x)|$. Therefore, in order to prove that $|L_{A_2}|=\sum_{w\in A_2(x)}|N^2(w)\cap N(x)|$ it suffices to show that for distinct $w,w'\in A_2(x)$, $u\in N^2(w)\cap N(x)$ and $u'\in N^2(w')\cap N(x)$, it holds that $\li{wu}\neq \li{w'u'}$. This fact is straightforward when $d(w',u)=2$, as in this case $w'\notin \li{wu}$ by Lemma \ref{LemmaNi}. Thus we consider the case where $d(w',u)=3$, observing that $u\neq u'$. Let $v'=f(w')$, and note that $w'v'\in E$ and $wv'\notin E$ by Claim \ref{ClaimRnonemptyfunc}. As $d(w',u)=3$, it follows that $v'u\notin E$ and therefore $v'\notin  \li{wu}$ by Lemma \ref{LemmaIndCom}. As $v'\in \li{w'u'}$, we obtain that $\li{wu}\neq \li{w'u'}$. 
\end{proof}

\subsection{\texorpdfstring{$R_1$}{R1} is nonempty}\label{sec:R1nonempty}

In this section we assume that $R_1\neq \emptyset$. Let $x\in R_1$ and $N(x)=\{u\}$. We note that $\li{xu}=V$, $N^3(x)\subseteq N^2(u)$ and $N^2(x)\subseteq N(u)$. Consider $w^*\in A(x)$ and $v^*=f(w^*)$, that is, $\li{xv^*}=\li{xw^*}$. We illustrate $G$ in the following diagram.
\begin{figure}[H]
\begin{center}      
\scalebox{.75}{\tikzset{every picture/.style={line width=0.75pt}} 

\begin{tikzpicture}[x=0.75pt,y=0.75pt,yscale=-1,xscale=1]

\draw  [fill={rgb, 255:red, 0; green, 0; blue, 0 }  ,fill opacity=1 ] (215,25) .. controls (215,22.24) and (217.24,20) .. (220,20) .. controls (222.76,20) and (225,22.24) .. (225,25) .. controls (225,27.76) and (222.76,30) .. (220,30) .. controls (217.24,30) and (215,27.76) .. (215,25) -- cycle ;
\draw  [fill={rgb, 255:red, 0; green, 0; blue, 0 }  ,fill opacity=1 ] (215,85) .. controls (215,82.24) and (217.24,80) .. (220,80) .. controls (222.76,80) and (225,82.24) .. (225,85) .. controls (225,87.76) and (222.76,90) .. (220,90) .. controls (217.24,90) and (215,87.76) .. (215,85) -- cycle ;
\draw    (220,25) -- (220,85) ;
\draw  [fill={rgb, 255:red, 0; green, 0; blue, 0 }  ,fill opacity=1 ] (280,145) .. controls (280,142.24) and (282.24,140) .. (285,140) .. controls (287.76,140) and (290,142.24) .. (290,145) .. controls (290,147.76) and (287.76,150) .. (285,150) .. controls (282.24,150) and (280,147.76) .. (280,145) -- cycle ;
\draw  [fill={rgb, 255:red, 0; green, 0; blue, 0 }  ,fill opacity=1 ] (251.29,145.25) .. controls (251.29,144.59) and (250.76,144.05) .. (250.09,144.05) .. controls (249.43,144.05) and (248.89,144.59) .. (248.89,145.25) .. controls (248.89,145.91) and (249.43,146.45) .. (250.09,146.45) .. controls (250.76,146.45) and (251.29,145.91) .. (251.29,145.25) -- cycle ;
\draw  [fill={rgb, 255:red, 0; green, 0; blue, 0 }  ,fill opacity=1 ] (261.42,145.2) .. controls (261.42,144.54) and (260.88,144) .. (260.22,144) .. controls (259.55,144) and (259.02,144.54) .. (259.02,145.2) .. controls (259.02,145.86) and (259.55,146.4) .. (260.22,146.4) .. controls (260.88,146.4) and (261.42,145.86) .. (261.42,145.2) -- cycle ;
\draw  [fill={rgb, 255:red, 0; green, 0; blue, 0 }  ,fill opacity=1 ] (271.43,145.25) .. controls (271.43,144.59) and (270.89,144.05) .. (270.23,144.05) .. controls (269.57,144.05) and (269.03,144.59) .. (269.03,145.25) .. controls (269.03,145.91) and (269.57,146.45) .. (270.23,146.45) .. controls (270.89,146.45) and (271.43,145.91) .. (271.43,145.25) -- cycle ;
\draw  [fill={rgb, 255:red, 0; green, 0; blue, 0 }  ,fill opacity=1 ] (140,145) .. controls (140,142.24) and (142.24,140) .. (145,140) .. controls (147.76,140) and (150,142.24) .. (150,145) .. controls (150,147.76) and (147.76,150) .. (145,150) .. controls (142.24,150) and (140,147.76) .. (140,145) -- cycle ;
\draw  [fill={rgb, 255:red, 0; green, 0; blue, 0 }  ,fill opacity=1 ] (180,145) .. controls (180,142.24) and (182.24,140) .. (185,140) .. controls (187.76,140) and (190,142.24) .. (190,145) .. controls (190,147.76) and (187.76,150) .. (185,150) .. controls (182.24,150) and (180,147.76) .. (180,145) -- cycle ;
\draw    (220,85) -- (145,145) ;
\draw    (220,85) -- (185,145) ;
\draw    (220,85) -- (235,145) ;
\draw  [fill={rgb, 255:red, 0; green, 0; blue, 0 }  ,fill opacity=1 ] (140,205) .. controls (140,202.24) and (142.24,200) .. (145,200) .. controls (147.76,200) and (150,202.24) .. (150,205) .. controls (150,207.76) and (147.76,210) .. (145,210) .. controls (142.24,210) and (140,207.76) .. (140,205) -- cycle ;
\draw  [fill={rgb, 255:red, 0; green, 0; blue, 0 }  ,fill opacity=1 ] (180,205) .. controls (180,202.24) and (182.24,200) .. (185,200) .. controls (187.76,200) and (190,202.24) .. (190,205) .. controls (190,207.76) and (187.76,210) .. (185,210) .. controls (182.24,210) and (180,207.76) .. (180,205) -- cycle ;
\draw    (145,145) -- (145,205) ;
\draw    (185,145) -- (185,205) ;
\draw  [dash pattern={on 4.5pt off 4.5pt}]  (185,145) -- (145,205) ;
\draw  [dash pattern={on 4.5pt off 4.5pt}]  (235,145) -- (145,205) ;
\draw   (139.9,213.08) .. controls (139.91,217.75) and (142.24,220.08) .. (146.91,220.07) -- (179.96,220.01) .. controls (186.63,220) and (189.97,222.33) .. (189.98,227) .. controls (189.97,222.33) and (193.29,219.99) .. (199.96,219.98)(196.96,219.98) -- (233.01,219.92) .. controls (237.68,219.91) and (240.01,217.58) .. (240,212.91) ;
\draw    (235,145) -- (235,205) ;
\draw  [fill={rgb, 255:red, 0; green, 0; blue, 0 }  ,fill opacity=1 ] (230,205) .. controls (230,202.24) and (232.24,200) .. (235,200) .. controls (237.76,200) and (240,202.24) .. (240,205) .. controls (240,207.76) and (237.76,210) .. (235,210) .. controls (232.24,210) and (230,207.76) .. (230,205) -- cycle ;
\draw  [fill={rgb, 255:red, 0; green, 0; blue, 0 }  ,fill opacity=1 ] (230,145) .. controls (230,142.24) and (232.24,140) .. (235,140) .. controls (237.76,140) and (240,142.24) .. (240,145) .. controls (240,147.76) and (237.76,150) .. (235,150) .. controls (232.24,150) and (230,147.76) .. (230,145) -- cycle ;
\draw  [fill={rgb, 255:red, 0; green, 0; blue, 0 }  ,fill opacity=1 ] (201.29,145.25) .. controls (201.29,144.59) and (200.76,144.05) .. (200.09,144.05) .. controls (199.43,144.05) and (198.89,144.59) .. (198.89,145.25) .. controls (198.89,145.91) and (199.43,146.45) .. (200.09,146.45) .. controls (200.76,146.45) and (201.29,145.91) .. (201.29,145.25) -- cycle ;
\draw  [fill={rgb, 255:red, 0; green, 0; blue, 0 }  ,fill opacity=1 ] (211.42,145.2) .. controls (211.42,144.54) and (210.88,144) .. (210.22,144) .. controls (209.55,144) and (209.02,144.54) .. (209.02,145.2) .. controls (209.02,145.86) and (209.55,146.4) .. (210.22,146.4) .. controls (210.88,146.4) and (211.42,145.86) .. (211.42,145.2) -- cycle ;
\draw  [fill={rgb, 255:red, 0; green, 0; blue, 0 }  ,fill opacity=1 ] (221.43,145.25) .. controls (221.43,144.59) and (220.89,144.05) .. (220.23,144.05) .. controls (219.57,144.05) and (219.03,144.59) .. (219.03,145.25) .. controls (219.03,145.91) and (219.57,146.45) .. (220.23,146.45) .. controls (220.89,146.45) and (221.43,145.91) .. (221.43,145.25) -- cycle ;
\draw    (220,85) -- (285,145) ;
\draw  [dash pattern={on 4.5pt off 4.5pt}]  (285,145) -- (145,205) ;
\draw  [fill={rgb, 255:red, 0; green, 0; blue, 0 }  ,fill opacity=1 ] (272.4,205.25) .. controls (272.4,204.59) and (271.86,204.05) .. (271.2,204.05) .. controls (270.54,204.05) and (270,204.59) .. (270,205.25) .. controls (270,205.91) and (270.54,206.45) .. (271.2,206.45) .. controls (271.86,206.45) and (272.4,205.91) .. (272.4,205.25) -- cycle ;
\draw  [fill={rgb, 255:red, 0; green, 0; blue, 0 }  ,fill opacity=1 ] (282.52,205.2) .. controls (282.52,204.54) and (281.98,204) .. (281.32,204) .. controls (280.66,204) and (280.12,204.54) .. (280.12,205.2) .. controls (280.12,205.86) and (280.66,206.4) .. (281.32,206.4) .. controls (281.98,206.4) and (282.52,205.86) .. (282.52,205.2) -- cycle ;
\draw  [fill={rgb, 255:red, 0; green, 0; blue, 0 }  ,fill opacity=1 ] (292.54,205.25) .. controls (292.54,204.59) and (292,204.05) .. (291.34,204.05) .. controls (290.68,204.05) and (290.14,204.59) .. (290.14,205.25) .. controls (290.14,205.91) and (290.68,206.45) .. (291.34,206.45) .. controls (292,206.45) and (292.54,205.91) .. (292.54,205.25) -- cycle ;

\draw (216,2.4) node [anchor=north west][inner sep=0.75pt]    {$x$};
\draw (54,82.4) node [anchor=north west][inner sep=0.75pt]    {$N( x)$};
\draw (54,136.4) node [anchor=north west][inner sep=0.75pt]    {$N^{2}( x)$};
\draw (54,196.4) node [anchor=north west][inner sep=0.75pt]    {$N^{3}( x)$};
\draw (228.33,73.73) node [anchor=north west][inner sep=0.75pt]    {$u$};
\draw (116.67,138.15) node [anchor=north west][inner sep=0.75pt]    {$v^{*}$};
\draw (114.67,197.4) node [anchor=north west][inner sep=0.75pt]    {$w^{*}$};
\draw (175.04,231.4) node [anchor=north west][inner sep=0.75pt]    {$A( x)$};

\end{tikzpicture}}
\end{center}
\caption{Diagram of $G$. Dashed lines represent missing edges.}    
\label{fig:R1nonemptyDiagram}
\end{figure}
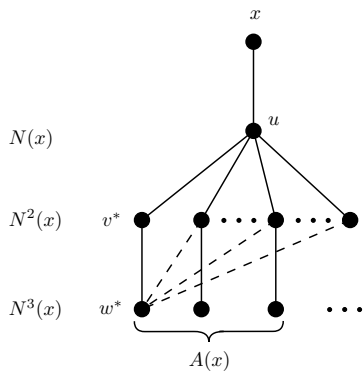

We now define the set of lines $L^*=\{\li{w^*v}:v\in N^2(x)-v^*\}$. For a line $l=\li{w^*v}$ with $v\in N^2(x)-v^*$, we have that $u\in l$ if $d(w^*,v)=3$ and $u\notin l$ if $d(w^*,v)=2$, hence $|L^*|=|N^2(x)|-1$. The lines in $L^*$ do not contain $x$ and thus $L^*$ is disjoint from $\Li^x_2$, $\Li^x_3$ and $L_{A_2}$. The set $\Li^x_2\cup \Li^x_3\cup L_{A_2}\cup L^*$ contains 
{\small $$
|N^2(x)|+ |N^3(x)|- |A(x)|+ |A_2(x)|+ |N^2(x)|-1=|N^2(x)|+ |N^3(x)|+ |N^2(x)\setminus B_1(x)|-1=n-3 +|N^2(x)\setminus B_1(x)|$$}
lines. We will prove the following proposition.

\begin{prop}
If $A_1(x)\neq \emptyset$, then $G$ is isomorphic to one of the following graphs: $M_{2,1,1}$, $M_{3,1,1}$, $M_{3,1,1,1}$, $M_{3,2,1}$, $M_{4,1,1,1,1}$, $M_{4,2,1,1}$, $M_{4,3,1}$.
\label{PropR1A_1nonempty}
\end{prop}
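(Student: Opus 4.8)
## Proof proposal

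The plan is to work entirely in the setting already established: $x \in R_1$ with $N(x) = \{u\}$, the bijection $f : A(x) \to B(x)$ from Claim \ref{ClaimRnonemptyfunc}, and the four pairwise-disjoint families of lines $\Li^x_2$, $\Li^x_3$, $L_{A_2}$, $L^*$ whose union has been computed to have size $n - 3 + |N^2(x) \setminus B_1(x)|$. Since $\ell(G) \le n-1$, this immediately forces
$$|N^2(x) \setminus B_1(x)| \le 2.$$
The first step is to exploit the hypothesis $A_1(x) \ne \emptyset$: picking $w \in A_1(x)$ and $u' \in N(x)$ with $\li{wu'} = \li{xw}$, since $N(x) = \{u\}$ we must have $u' = u$, so $\li{wu} = \li{xw} = \li{xf(w)}$. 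This is a strong constraint — it says $u$ lies on the line $\li{xf(w)} \in \Li^x_2$, and conversely pins down which vertices of $N^3(x)$ and $N^2(x)$ are forced to be adjacent to $u$ and to each other. I would first extract, from $\li{wu} = \li{xw}$ and Lemma \ref{LemmaNi} applied at the vertex $w$ (noting $w \in N^3(x) \subseteq N^2(u)$, so $d(u,w)=2$ and $u,w$ play symmetric roles), the structural fact that $f(w)$ is the \emph{unique} common neighbor of $u$ and $w$, and more importantly that for every $v \in N^2(x) - f(w)$ we have $v \notin \li{xw}$, hence $d(w,v) = 2$, i.e. $w$ is adjacent to every vertex of $N^2(x) \setminus \{f(w)\}$ — wait, that's backwards; rather $v \in \li{xw}$ would be needed for non-adjacency-at-distance-3, so I need to recheck: $d(w,v) \in \{2,3\}$ and $v \in \li{xw} \iff [xvw] \iff d(w,v)=2$ (using $d(x,w)=3$, $d(x,v)=2$). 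So $v \notin \li{xw}$ means $d(w,v)=3$. Combined with the bound $|N^2(x)\setminus B_1(x)| \le 2$, this tells us $N^2(x)$ is almost entirely contained in $B_1(x) = f(A_1(x))$, and each element of $A_1(x)$ is adjacent to exactly one element of $N^2(x)$ (its $f$-image) and at distance $3$ from all others.

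The second step is to count $|N^3(x)|$ and $|N^2(x)|$. From $A(x) \subseteq N^3(x)$, $B(x) \subseteq N^2(x)$, $|A(x)| = |B(x)|$, and $|N^2(x) \setminus B_1(x)| \le 2$ with $B(x) = B_1(x) \cup B_2(x)$, I get $|N^2(x)| \le |B_1(x)| + 2$ and symmetric control on $N^3(x)$. The key additional counting input: I should produce \emph{more} disjoint lines to sharpen the bound. Natural candidates are lines of the form $\li{u z}$ for $z$ not on $\li{xu} = V$ — but that line is universal, so instead lines within $N(u) \supseteq N^2(x)$, or lines $\li{vv'}$ for $v,v' \in N^2(x)$, or lines generated by pairs in $N^3(x)$. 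Using Lemma \ref{LemmaIndCom} if $N^3(x)$ or $N^2(x)$ is independent/complete, together with Lemma \ref{LemmaNi}, I expect to squeeze $n$ down to a small constant — the list $M_{2,1,1}, M_{3,1,1}, M_{3,1,1,1}, M_{3,2,1}, M_{4,1,1,1,1}, M_{4,2,1,1}, M_{4,3,1}$ has at most $4 + 4 + 1 = 9$ vertices, so ultimately $n \le 9$ and $|N^2(x)| + |N^3(x)| \le 7$.

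The third step is the case analysis: once $|N^2(x)|, |N^3(x)|$ and the adjacencies between them (determined by $f$ and the "distance $3$ to all non-images" fact above) are nailed down, plus the adjacency of $u$ to all of $N^2(x)$ and to the relevant $w$'s, the remaining freedom is (i) the internal edges of $N^2(x)$, (ii) the internal edges of $N^3(x)$, (iii) which vertices of $N^3(x)$ are in $A(x)$ versus not. Each sub-case is checked against the line count $\ell(G) \le n-1$ to eliminate it or to identify it with a specific $M_{p,p_1,\dots,p_q}$: the $K_p$ will be $\{u\} \cup N^2(x)$ (or $N^2(x)$ plus part of $N^3(x)$), the matching is $\{w f(w)\}$, and the pendant cliques are the components of $N^3(x)$. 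I expect the main obstacle to be step two/three: controlling the internal edges among $N^2(x)$ and among $N^3(x)$ — i.e. showing these sets are complete (so that the graph really is of the form $M_{p,p_1,\dots,p_q}$) rather than something with extra missing edges that would create new lines. The resolution should come from Lemma \ref{Lemmadxdy} (two vertices equidistant from a third cannot lie on the third's line, generating distinct lines) applied to triples inside $N^2(x) \cup N^3(x)$, forcing non-existence of configurations that would push $\ell(G)$ back up to $n$; and one must carefully verify that the seven listed graphs are exactly the ones surviving, in particular that $M_{4,2,2}$ and $M_{4,3,1}$ (which do appear in the statement of the section's overall goal) are correctly sorted — $M_{4,3,1}$ in, and $M_{4,2,2}$ presumably excluded here because it has no degree-$1$ vertex and hence arises only in the $R_1 = \emptyset$ branch, a point worth an explicit remark.
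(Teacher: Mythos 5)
There is a genuine gap, and it starts with a concrete computational error in your first step. For $v' \in N^2(x)$ and $w \in N^3(x)$, the condition $v' \in \li{xw}$ is equivalent to $[xv'w]$, i.e.\ $3 = d(x,w) = d(x,v') + d(v',w) = 2 + d(v',w)$, which forces $d(v',w) = 1$ --- not $2$ as you wrote. Consequently $v' \notin \li{xw}$ only tells you $d(v',w) \in \{2,3\}$, and your conclusion that each $w \in A_1(x)$ is at distance $3$ from every vertex of $N^2(x) \setminus \{f(w)\}$ is exactly backwards. The correct conclusion, which the paper derives, is $N^2(x) - f(w) \subseteq N^2(w)$: if $v'' \in N^2(x)$ had $d(w,v'')=3$, then since $d(w,u)=2$ and $uv'' \in E$ we would have $[wuv'']$, so $v'' \in \li{wu} = \li{xf(w)}$, contradicting $\li{xf(w)} \cap N^2(x) = \{f(w)\}$ (Lemma \ref{LemmaNi}). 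This reversal is not cosmetic: the fact that $w^*$ is at distance \emph{two} from all of $N^2(x)-v^*$ is what lets one show $w^* \notin \li{vv'}$ for non-adjacent $v,v' \in N^2(x)$ and hence that $G[N^2(x)]$ is a complete graph, and it is also what excludes $V$ from the line families being counted. With your version of the structural fact, the subsequent analysis would not go through.

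Beyond that error, steps two and three remain programmatic where the actual work lies. The paper's route is: first prove $G[N^2(x)]$ is complete (a contradiction argument using a hypothetical non-edge $vv'$ and the line $\li{w^*w}$ with $w = f^{-1}(v)$); then add the $\binom{|N^2(x)|}{2}$ lines $L_2 = \{\li{vv'} : v,v' \in N^2(x)\}$ to the count, obtaining $n-1-|A_1(x)|+\binom{|N^2(x)|}{2}$ lines and hence $\binom{|N^2(x)|}{2} \le |A_1(x)| \le |N^2(x)|$, which pins $(|A_1(x)|,|N^2(x)|)$ to $\{(1,2),(2,2),(3,3)\}$; finally a short claim for each pair identifies the graph, with the only residual freedom being $E(G[N^3(x)])$ (e.g.\ ruling out $|E(G[N^3(x)])|=2$ in the $(3,3)$ case via the line $\li{w^*w'}$). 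Your proposal correctly anticipates that lines inside $N^2(x)$ are the extra lines to count and that the internal edges of $N^2(x)$ and $N^3(x)$ are the remaining obstacles, and your closing remark about why $M_{4,2,2}$ is absent from this branch is right; but without the corrected distance fact and the completeness of $G[N^2(x)]$, the quantitative bound $|N^2(x)| \le 3$ and the case analysis cannot be carried out as sketched.
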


\subsubsection*{Proof of Proposition \ref{PropR1A_1nonempty}}
We assume that $A_1(x)\neq \emptyset$.
If $|N^2(x)|=1$, necessarily $|N^3(x)|=1$, and hence $G$ is isomorphic to $M_{2,1,1}$. In what follows we assume that $|N^2(x)|\geq 2$. For $w\in A_1(x)$ and $v=f(w)$, we have that $\li{xv}=\li{xw}=\li{wu}$, and since $\li{xv}\cap N^2(x)=\{v\}$ and $N^3(w)\cap N^2(x)\subseteq \li{wu}\setminus \li{xv}=\emptyset$, we obtain that $N^2(x)-v  \subseteq N^2(w)$. Without loss of generality, we may assume that $w^*\in A_1(x)$,  therefore,  $N^2(x)-v^*  \subseteq N^2(w^*)$. Since $|N^2(x)|\geq 2$,  $V\notin \Li^x_2$, in particular, $\li{xv^*}=\li{xw^*}\neq V$. 
As the lines in $\Li^x_3\setminus \{\li{xw^*}\}$ do not contain $w^*$, we also obtain that $V\notin \Li^x_3$.
Given that $d(w^*,u)=2$, no line in $L_{A_2}$ contains $w^*$, and recalling that the lines in $L^*$ do not contain $x$, we conclude that $V\notin \Li^x_2\cup \Li^x_3\cup  L_{A_2}\cup L^*$.
As $V=\li{xu}$, the set $\Li^x_2\cup \Li^x_3\cup L_{A_2}\cup L^*\cup \{\li{xu}\}$ contains $n-2 +|N^2(x)\setminus B_1(x)|$ lines.

\begin{claim}
$G[N^2(x)]$ is a complete graph.
\end{claim}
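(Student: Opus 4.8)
The plan is to pit the line count against the hypothesis $\ell(G)\le n-1$. Since the set $\Li^x_2\cup\Li^x_3\cup L_{A_2}\cup L^*\cup\{\li{xu}\}$ already contains $n-2+|N^2(x)\setminus B_1(x)|$ lines, we obtain $|N^2(x)\setminus B_1(x)|\le 1$. Now suppose, for a contradiction, that $v_1v_2\notin E$ for some $v_1,v_2\in N^2(x)$, and note that $d(v_1,v_2)=2$ because both are neighbours of $u$. First I would check that $\li{v_1v_2}$ is a line outside that collection: by Lemma \ref{Lemmadxdy} it does not contain $x$, hence it is none of the lines of $\Li^x_2\cup\Li^x_3\cup L_{A_2}\cup\{\li{xu}\}$; and a short betweenness computation, splitting on whether $v_i$ equals $v^*=f(w^*)$, shows $w^*\notin\li{v_1v_2}$, so it is not in $L^*$ either. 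Thus $\li{v_1v_2}$ is a new line, and if $|N^2(x)\setminus B_1(x)|=1$ this already gives $\ell(G)\ge n$, a contradiction.

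It remains to treat the tight case $B_1(x)=N^2(x)$. Here $B_2(x)=A_2(x)=\emptyset$, so $L_{A_2}=\emptyset$ and the collection $\mathcal{C}:=\Li^x_2\cup\Li^x_3\cup L^*\cup\{\li{xu}\}$ has exactly $n-2$ lines; to finish I now need \emph{two} lines outside $\mathcal{C}$, one of which is $\li{v_1v_2}$. The structural input I would extract first: for $v\in N^2(x)$ and $w_v:=f^{-1}(v)$, the line $\li{w_vu}$ equals $\li{xw_v}=\li{xv}$, and computing $\li{w_vu}$ directly — using $N(u)=\{x\}\cup N^2(x)$ and $N^3(x)\subseteq N^2(u)$ — gives $\li{xv}=\{x,u,v,w_v\}$. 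Hence $w_v$ is a private neighbour of $v$: it is adjacent to no vertex of $\{x,u\}\cup(N^2(x)-v)$, and $v\mapsto w_v$ is injective, so $|N^3(x)|\ge|N^2(x)|\ge 2$. A further consequence of the same computations is that $\li{uw'}=\li{xw'}\cup\{v\in N^2(x):d(v,w')=3\}$ for every $w'\in N^3(x)$.

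Now I would split once more. If some $v\in N^2(x)$ and $w'\in N^3(x)$ satisfy $d(v,w')=3$, then $\li{uw'}$ strictly contains $\li{xw'}$; it contains $x$, but by the private-neighbour structure it is none of the lines of $\Li^x_2\cup\Li^x_3$, it is not $\li{xu}=V$ (it meets $N^3(x)$ only in $w'$, whereas $V$ contains all of $N^3(x)$), and it is not in $L^*$ nor equal to $\li{v_1v_2}$ (both avoid $x$); so $\li{uw'}$ is a second new line and we are done. Otherwise every vertex of $N^3(x)$ is within distance $2$ of every vertex of $N^2(x)$; assuming $|N^2(x)|\ge 3$, pick distinct $v_a,v_b\in N^2(x)$ both different from $v^*$, and verify — each by a one-line betweenness check using $d(x,w_{v_a})=3$, $d(u,w_{v_a})=2$ and $d(w_{v^*},v_b)=2$ — that $\li{w_{v_a}v_b}$ contains none of $x$, $u$, $w^*=w_{v^*}$; then it lies in neither $\mathcal{C}$ nor $\{\li{v_1v_2}\}$, giving the second new line. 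The remaining possibility $|N^2(x)|=2$ is a small finite configuration, which I would rule out directly.

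The step I expect to be the main obstacle is precisely the tight case $B_1(x)=N^2(x)$: there $\mathcal{C}$ already accounts for $n-2$ lines and leaves essentially no room, so a crude count does not suffice — the argument has to exploit the rigid description $\li{xv}=\{x,u,v,w_v\}$ and the private-neighbour structure to manufacture the second new line, and separately to dispose of the case $|N^2(x)|=2$.
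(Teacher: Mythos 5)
Your overall strategy is sound and the computations you do carry out are correct, but the argument as written has one real hole: the terminal sub-case $|N^2(x)|=2$ of the tight case $B_1(x)=N^2(x)$, which you dismiss as ``a small finite configuration \dots ruled out directly''. As you have set things up, this is not yet a finite configuration: you have only proved $|N^3(x)|\ge |N^2(x)|$, so a priori $N^3(x)$ (and hence $n$) is unbounded in this sub-case, and ``checking it directly'' is not an available move. What rescues you is an observation you never make explicit: since $N(x)=\{u\}$, every $w'\in N^3(x)$ must have a neighbour in $N^2(x)=B_1(x)$, and by Claim \ref{ClaimRnonemptyfunc} the only edges between $B_1(x)$ and $N^3(x)$ are the matching edges $vw_v$; hence $N^3(x)=A_1(x)$ and $|N^3(x)|=|N^2(x)|$. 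Only with this in hand does $|N^2(x)|=2$ force $n=6$, and even then you still owe the reader the extra line: for the resulting graph one checks that $w^*w\in E$ is forced (with $w=f^{-1}(v)$, $v$ the non-neighbour of $v^*$) and that $\li{w^*w}$, which contains $v$ and $v^*$ but neither $x$ nor $u$, lies outside $\mathcal{C}\cup\{\li{v_1v_2}\}$. Incidentally, the same observation makes your first sub-case (``some $d(v,w')=3$'') vacuous, since every $w'\in N^3(x)$ is then a private neighbour $w_{v'}$ and so lies within distance $2$ of every vertex of $N^2(x)$.

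For comparison, the paper bypasses the entire case analysis. From $|N^2(x)\setminus B_1(x)|=0$ it takes the non-adjacent pair $v,v'\in B_1(x)$, normalizes $v'=v^*$, observes that $d(w^*,v)=2$ forces $w^*w\in E$ (the middle vertex of a $w^*$--$v$ geodesic can only be $w=f^{-1}(v)$, because $N(w^*)\cap N^2(x)=\{v^*\}$ and $v^*v\notin E$), and then derives a contradiction by showing that $l_2=\li{w^*w}$ would have to coincide with $\li{w^*v}\in L^*$ while containing $v^*\notin\li{w^*v}$. That single line $\li{w^*w}$ is exactly what is missing from your endgame; if you insert it, your (considerably longer) argument closes, but the paper's two-line trick makes most of your structural work --- the explicit description $\li{xv}=\{x,u,v,w_v\}$ and the $d(v,w')=3$ dichotomy --- unnecessary for this particular claim.
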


\begin{proof}
For the sake of contradiction, suppose that there exist distinct $v,v'\in N^2(x)$ such that $vv'\notin E$, in particular $d(v,v')=2$. We consider the line $l_1=\li{vv'}$, and we note that $x\notin l_1$. Since $\{v,v'\}\cap N^2(w^*)\neq \emptyset$, we also have that $w^*\notin l_1$, implying that $l_1\notin \Li^x_2\cup \Li^x_3\cup L_{A_2}\cup L^*\cup \{\li{xu}\}$.  It follows that $\ell(G)=n-1$, $\Li(G)= \Li^x_2\cup \Li^x_3\cup L_{A_2}\cup L^*\cup \{\li{xu},l_1\}$, $|N^2(x)\setminus B_1(x)|=\emptyset$ and $v,v'\in B_1(x)$. Consider $w=f^{-1}(v)$ and $w'=f^{-1}(v')$. Without loss of generality, we assume that $w^*=w'$ and $v^*=v'$, therefore $d(v^*,v)=2$ and $l_1=\li{v^*v}$. $G$ is depicted in the following diagram.
\begin{center}
\scalebox{.65}{\tikzset{every picture/.style={line width=0.75pt}} 

\begin{tikzpicture}[x=0.75pt,y=0.75pt,yscale=-1,xscale=1]

\draw  [fill={rgb, 255:red, 0; green, 0; blue, 0 }  ,fill opacity=1 ] (268.93,57) .. controls (268.93,54.24) and (271.17,52) .. (273.93,52) .. controls (276.69,52) and (278.93,54.24) .. (278.93,57) .. controls (278.93,59.76) and (276.69,62) .. (273.93,62) .. controls (271.17,62) and (268.93,59.76) .. (268.93,57) -- cycle ;
\draw  [fill={rgb, 255:red, 0; green, 0; blue, 0 }  ,fill opacity=1 ] (268.93,117) .. controls (268.93,114.24) and (271.17,112) .. (273.93,112) .. controls (276.69,112) and (278.93,114.24) .. (278.93,117) .. controls (278.93,119.76) and (276.69,122) .. (273.93,122) .. controls (271.17,122) and (268.93,119.76) .. (268.93,117) -- cycle ;
\draw    (273.93,57) -- (273.93,117) ;
\draw  [fill={rgb, 255:red, 0; green, 0; blue, 0 }  ,fill opacity=1 ] (193.93,177) .. controls (193.93,174.24) and (196.17,172) .. (198.93,172) .. controls (201.69,172) and (203.93,174.24) .. (203.93,177) .. controls (203.93,179.76) and (201.69,182) .. (198.93,182) .. controls (196.17,182) and (193.93,179.76) .. (193.93,177) -- cycle ;
\draw  [fill={rgb, 255:red, 0; green, 0; blue, 0 }  ,fill opacity=1 ] (233.93,177) .. controls (233.93,174.24) and (236.17,172) .. (238.93,172) .. controls (241.69,172) and (243.93,174.24) .. (243.93,177) .. controls (243.93,179.76) and (241.69,182) .. (238.93,182) .. controls (236.17,182) and (233.93,179.76) .. (233.93,177) -- cycle ;
\draw    (273.93,117) -- (198.93,177) ;
\draw    (273.93,117) -- (238.93,177) ;
\draw    (273.93,117) -- (305,175) ;
\draw  [fill={rgb, 255:red, 0; green, 0; blue, 0 }  ,fill opacity=1 ] (193.93,237) .. controls (193.93,234.24) and (196.17,232) .. (198.93,232) .. controls (201.69,232) and (203.93,234.24) .. (203.93,237) .. controls (203.93,239.76) and (201.69,242) .. (198.93,242) .. controls (196.17,242) and (193.93,239.76) .. (193.93,237) -- cycle ;
\draw  [fill={rgb, 255:red, 0; green, 0; blue, 0 }  ,fill opacity=1 ] (233.93,237) .. controls (233.93,234.24) and (236.17,232) .. (238.93,232) .. controls (241.69,232) and (243.93,234.24) .. (243.93,237) .. controls (243.93,239.76) and (241.69,242) .. (238.93,242) .. controls (236.17,242) and (233.93,239.76) .. (233.93,237) -- cycle ;
\draw    (198.93,177) -- (198.93,237) ;
\draw    (238.93,177) -- (238.93,237) ;
\draw  [dash pattern={on 4.5pt off 4.5pt}]  (238.93,177) -- (198.93,237) ;
\draw  [dash pattern={on 4.5pt off 4.5pt}]  (305,175) -- (198.93,237) ;
\draw  [fill={rgb, 255:red, 0; green, 0; blue, 0 }  ,fill opacity=1 ] (300,175) .. controls (300,172.24) and (302.24,170) .. (305,170) .. controls (307.76,170) and (310,172.24) .. (310,175) .. controls (310,177.76) and (307.76,180) .. (305,180) .. controls (302.24,180) and (300,177.76) .. (300,175) -- cycle ;
\draw  [fill={rgb, 255:red, 0; green, 0; blue, 0 }  ,fill opacity=1 ] (269.54,175.25) .. controls (269.54,174.59) and (269.01,174.05) .. (268.34,174.05) .. controls (267.68,174.05) and (267.14,174.59) .. (267.14,175.25) .. controls (267.14,175.91) and (267.68,176.45) .. (268.34,176.45) .. controls (269.01,176.45) and (269.54,175.91) .. (269.54,175.25) -- cycle ;
\draw  [fill={rgb, 255:red, 0; green, 0; blue, 0 }  ,fill opacity=1 ] (279.67,175.2) .. controls (279.67,174.54) and (279.13,174) .. (278.47,174) .. controls (277.8,174) and (277.27,174.54) .. (277.27,175.2) .. controls (277.27,175.86) and (277.8,176.4) .. (278.47,176.4) .. controls (279.13,176.4) and (279.67,175.86) .. (279.67,175.2) -- cycle ;
\draw  [fill={rgb, 255:red, 0; green, 0; blue, 0 }  ,fill opacity=1 ] (289.68,175.25) .. controls (289.68,174.59) and (289.14,174.05) .. (288.48,174.05) .. controls (287.82,174.05) and (287.28,174.59) .. (287.28,175.25) .. controls (287.28,175.91) and (287.82,176.45) .. (288.48,176.45) .. controls (289.14,176.45) and (289.68,175.91) .. (289.68,175.25) -- cycle ;
\draw  [dash pattern={on 4.5pt off 4.5pt}]  (238.93,177) -- (198.93,177) ;
\draw    (198.93,237) -- (238.93,237) ;
\draw  [dash pattern={on 4.5pt off 4.5pt}]  (198.93,177) -- (238.93,237) ;
\draw  [fill={rgb, 255:red, 0; green, 0; blue, 0 }  ,fill opacity=1 ] (291.46,235.25) .. controls (291.46,234.59) and (290.92,234.05) .. (290.26,234.05) .. controls (289.6,234.05) and (289.06,234.59) .. (289.06,235.25) .. controls (289.06,235.91) and (289.6,236.45) .. (290.26,236.45) .. controls (290.92,236.45) and (291.46,235.91) .. (291.46,235.25) -- cycle ;
\draw  [fill={rgb, 255:red, 0; green, 0; blue, 0 }  ,fill opacity=1 ] (301.58,235.2) .. controls (301.58,234.54) and (301.05,234) .. (300.38,234) .. controls (299.72,234) and (299.18,234.54) .. (299.18,235.2) .. controls (299.18,235.86) and (299.72,236.4) .. (300.38,236.4) .. controls (301.05,236.4) and (301.58,235.86) .. (301.58,235.2) -- cycle ;
\draw  [fill={rgb, 255:red, 0; green, 0; blue, 0 }  ,fill opacity=1 ] (311.6,235.25) .. controls (311.6,234.59) and (311.06,234.05) .. (310.4,234.05) .. controls (309.74,234.05) and (309.2,234.59) .. (309.2,235.25) .. controls (309.2,235.91) and (309.74,236.45) .. (310.4,236.45) .. controls (311.06,236.45) and (311.6,235.91) .. (311.6,235.25) -- cycle ;

\draw (269.93,34.4) node [anchor=north west][inner sep=0.75pt]    {$x$};
\draw (107.93,114.4) node [anchor=north west][inner sep=0.75pt]    {$N( x)$};
\draw (107.93,168.4) node [anchor=north west][inner sep=0.75pt]    {$N^{2}( x)$};
\draw (107.93,228.4) node [anchor=north west][inner sep=0.75pt]    {$N^{3}( x)$};
\draw (282.27,105.73) node [anchor=north west][inner sep=0.75pt]    {$u$};
\draw (170.6,170.15) node [anchor=north west][inner sep=0.75pt]    {$v^{*}$};
\draw (168.6,229.4) node [anchor=north west][inner sep=0.75pt]    {$w^{*}$};
\draw (248.33,172.96) node [anchor=north west][inner sep=0.75pt]    {$v$};
\draw (248.78,232.96) node [anchor=north west][inner sep=0.75pt]    {$w$};

\end{tikzpicture}}
\end{center}
As $d(w^*,v)=2$, necessarily $w^*w\in E$. Now consider the line $l_2=\li{w^*w}$, observing that $x,u\notin l_2$ and that $v,v^*\in l_2$. Since $u\in l_1$, we have that $l_2\in L^*$, and since $v\in l_2$, $l_2$ must be equal to $\li{w^*v}$.
Note that $v^*\in l_2\setminus \li{w^*v}$, yielding a contradiction.    
\end{proof}

Given that $G[N^2(x)]$ is a complete graph, we define the set of lines $L_2=\{\li{vv'}:v,v'\in N^2(x), v\neq v'\}$. Since no line in $L_2$ contains $x$, the set $L_2$ is disjoint from $\Li^x_2,\Li^x_3$ and $L_{A_2}$, furthermore, $V\notin L_2$. Therefore, the set $\Li^x_2\cup \Li^x_3\cup L_{A_2}\cup L_2\cup \{\li{xu}\}$ contains 

$$
|N^2(x)|+ |N^3(x)|- |A(x)|+ |A_2(x)|+ \binom{|N^2(x)|}{2}+1=n-1-|A_1(x)|+\frac{|N^2(x)|^2-|N^2(x)|}{2}
$$

lines. As $\ell(G)\leq n-1$, we obtain that $|N^2(x)|^2\leq 2|A_1(x)|+|N^2(x)|$, hence it must hold that $(|A_1(x)|,|N^2(x)|)\in \{(1,2),(2,2),(3,3)\}$. We will prove that $G$ is isomorphic to  $M_{p,p_1,...,p_q}$, with $p=|N^2(x)|+1$, since $G[N^2(x)+u]$ is a complete graph.

\begin{claim}
If $(|A_1(x)|,|N^2(x)|)=(1,2)$, then $G$ is isomorphic to $M_{3,1,1}$.
\end{claim}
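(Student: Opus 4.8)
The plan is to exploit the arithmetic of the line count: when $(|A_1(x)|,|N^2(x)|)=(1,2)$, the enumerated family $\Li^x_2\cup\Li^x_3\cup L_{A_2}\cup L_2\cup\{\li{xu}\}$ contains $n-1-|A_1(x)|+\binom{|N^2(x)|}{2}=n-1$ distinct lines, so since $\ell(G)\leq n-1$ this family is \emph{all} of $\Li(G)$; consequently any line we can exhibit outside it yields a contradiction. Write $N^2(x)=\{v^*,v\}$, recall that $A_1(x)=\{w^*\}$ with $f(w^*)=v^*$, and note that since $f$ is injective into $N^2(x)$ we have $A_2(x)=A(x)\setminus\{w^*\}$ with $|A_2(x)|\leq 1$. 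I would first record the rigid local structure available: $N(u)=\{x\}\cup N^2(x)=\{x,v^*,v\}$ (since $N(x)=\{u\}$, $N^2(x)\subseteq N(u)$ and $N^3(x)\subseteq N^2(u)$); $d(w^*,v)=2$ (from $N^2(x)-v^*\subseteq N^2(w^*)$); and, by Claim \ref{ClaimRnonemptyfunc}, the only neighbour of $v^*$ in $N^3(x)$ is $w^*$, so every $w\in N^3(x)\setminus\{w^*\}$ is adjacent to $v$ but not to $v^*$ (it needs a neighbour in $N^2(x)$ on a shortest path to $x$, and that neighbour cannot be $v^*$).

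The first real step is to rule out $A_2(x)\neq\emptyset$. If $A_2(x)=\{w'\}$ then $f(w')=v$, so $w'$ meets $N^2(x)$ only in $v$ and the shortest $x$-$w'$ path is forced to be $x$-$u$-$v$-$w'$; Lemma \ref{LemmaLineDiam} gives $\li{xw'}=\{x,u,v,w'\}$, and a short computation gives $\li{w'u}=\{x,u,v,w'\}$ as well. Thus $\li{w'u}=\li{xw'}$ with $u\in N(x)$, so $w'\in A_1(x)$, contradicting $A_1(x)=\{w^*\}$ and $w'\neq w^*$. Hence $A_2(x)=\emptyset$, $A(x)=\{w^*\}$, $L_{A_2}=\emptyset$, and $\Li(G)=\Li^x_2\cup\Li^x_3\cup L_2\cup\{V\}$.

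The second, and main, step is to prove $N^3(x)=\{w^*\}$. Suppose $w_1\in N^3(x)\setminus\{w^*\}$; it is adjacent to $v$, not to $v^*$, and (as above) the shortest $x$-$w_1$ path is $x$-$u$-$v$-$w_1$, so $\li{xw_1}=\{x,u,v,w_1\}$. If $w_1$ is the only vertex of $N^3(x)\setminus\{w^*\}$, then $N(v)\cap N^3(x)=\{w_1\}$, whence a routine betweenness computation gives $\li{xv}=\{x,u,v\}\cup(N(v)\cap N^3(x))=\{x,u,v,w_1\}=\li{xw_1}\in\Li^x_2$, forcing $w_1\in A(x)=\{w^*\}$, a contradiction. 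If instead there are two distinct $w_1,w_2\in N^3(x)\setminus\{w^*\}$, consider $\li{w_1w_2}$: every betweenness relation that would place $x$ on it requires a distance bigger than $3$, so $x\notin\li{w_1w_2}$, while $d(u,w_1)=d(u,w_2)=2$ gives $u\notin\li{w_1w_2}$ by Lemma \ref{Lemmadxdy}. Since every member of $\Li^x_2\cup\Li^x_3\cup\{V\}$ contains $x$, necessarily $\li{w_1w_2}=\li{v^*v}$, the one remaining line; but $d(v^*,w_1)=d(v^*,w_2)=2$ (each $w_i$ reaches $v^*$ through $v$) gives $v^*\notin\li{w_1w_2}$ by Lemma \ref{Lemmadxdy}, whereas $v^*\in\li{v^*v}$, a contradiction. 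Therefore $N^3(x)=\{w^*\}$, so $V=\{x,u,v^*,v,w^*\}$; since $G[N^2(x)]$ is complete and $N^2(x)\subseteq N(u)$, the edge set is exactly the triangle on $\{u,v^*,v\}$ together with the pendant edges $xu$ and $w^*v^*$, i.e.\ $G\cong M_{3,1,1}$.

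I expect the main obstacle to be this second step: a priori $N^3(x)$ could be large, and ruling this out requires manufacturing a line that escapes the five-member list $\Li(G)$ — the decisive move is the double application of Lemma \ref{Lemmadxdy} (to $u$ and then to $v^*$) once two vertices of $N^3(x)\setminus\{w^*\}$ are at hand. The line computations in the other steps are routine, being forced by the scarcity of shortest paths in a diameter-$3$ graph with a degree-one vertex $x$.
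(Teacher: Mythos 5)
Your proof is correct and follows essentially the same route as the paper's: the decisive step in both is that two vertices of $N^3(x)\setminus\{w^*\}$ would generate a line $\li{w_1w_2}$ containing neither $x$ nor $v^*$, hence lying outside the exhaustive list $\Li(G)$, while a single such vertex is forced into $A(x)$ (indeed $A_1(x)$), contradicting $|A_1(x)|=1$. Your preliminary elimination of $A_2(x)$ is a harmless extra step that the paper sidesteps by observing that all lines of $L_{A_2}$ contain $x$ anyway.
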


\begin{proof}
In this case we have that $\ell(G)=n-1$ and $\Li(G)=\Li^x_2\cup \Li^x_3\cup L_{A_2}\cup L_2\cup \{\li{xu}\}$. Moreover, $A_1(x)=\{w^*\}$ and $N^2(x)=\{v^*,v\}$, thus it suffices to prove that $N^3(x)=\{w^*\}$. Suppose, for contradiction, that there exists $w\in N^3(x)-w^*$. Necessarily we have that  $wv\in E$, and since $w\notin A_1(x)$, the set $N^3(x)$ must be different from $\{w^*,w\}$. It follows that there exists $w'\in N^3(x)\setminus\{w^*,w\}$ and that $w'v\in E$. We note that $x,v^*\notin l=\li{ww'}$, hence $l\notin \Li(G)$, which is a contradiction.
\end{proof}

\begin{claim}
If $(|A_1(x)|,|N^2(x)|)=(2,2)$, then $G$ is isomorphic to  $M_{3,1,1,1}$ or $M_{3,2,1}$.
\end{claim}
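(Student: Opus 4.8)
The plan is to show that the hypothesis $(|A_1(x)|,|N^2(x)|)=(2,2)$ pins down $G$ completely except for a single edge, and that the two graphs so obtained are exactly $M_{3,1,1,1}$ and $M_{3,2,1}$.

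First I would unpack the hypothesis. Since $A_1(x)\subseteq A(x)$ and, via the bijection $f$ of Claim~\ref{ClaimRnonemptyfunc}, $|A(x)|=|B(x)|\le|N^2(x)|=2=|A_1(x)|$, we get $A(x)=A_1(x)$, $A_2(x)=\emptyset$ and $B(x)=N^2(x)$. Write $A_1(x)=\{w^*,w\}$ and $N^2(x)=\{v^*,v\}$ with $f(w^*)=v^*$; then $f(w)=v$ by injectivity of $f$, and Claim~\ref{ClaimRnonemptyfunc} gives $E(N^2(x),N^3(x))=E(B(x),N^3(x))=\{w^*v^*,\,wv\}$. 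Now fix any $w'\in N^3(x)$. Since $N(x)=\{u\}$ and $d(x,w')=3$, a shortest $x$--$w'$ path has the form $x,u,v',w'$ with $d(x,v')=2$, so $v'\in N^2(x)\cap N(w')$; hence $w'v'$ is one of the two edges $w^*v^*,wv$, which forces $w'\in\{w^*,w\}$. Therefore $N^3(x)=\{w^*,w\}$ and $n=1+1+2+2=6$.

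Next I would read off all the edges. We have $N(x)=\{u\}$. Since every neighbour of $u$ is at distance at most $2$ from $x$, and $N^2(x)\subseteq N(u)$ while $N^3(x)\subseteq N^2(u)$, we obtain $N(u)=\{x,v^*,v\}$. Also $v^*v\in E$ because $G[N^2(x)]$ is complete, and $E(N^2(x),N^3(x))=\{w^*v^*,wv\}$ forces $w^*v\notin E$ and $wv^*\notin E$. The distance conditions exclude the remaining pairs $xv^*,xv,xw^*,xw,uw^*,uw$. Hence the only pair whose adjacency is still undetermined is $\{w^*,w\}$.

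Finally I would split on this edge. If $w^*w\notin E$, then $G$ is the triangle on $\{u,v^*,v\}$ together with the three pendant vertices $x$, $w^*$, $w$ attached to $u$, $v^*$, $v$ respectively by a matching of size $3$; that is, $G\cong M_{3,1,1,1}$. If $w^*w\in E$, then $\{u,v^*,v\}$ induces $K_3$, $\{w^*,w\}$ induces $K_2$, $\{x\}$ is a $K_1$, and these are joined to the $K_3$ by the matching $\{xu,\,w^*v^*,\,wv\}$ of size $2+1$; that is, $G\cong M_{3,2,1}$. In both cases the statement follows. I expect the main point to be the reduction $N^3(x)=\{w^*,w\}$, which rests on the observation that every vertex of $N^3(x)$ has a neighbour in $N^2(x)$; everything after that is routine bookkeeping, and the line counts need not be revisited, since Proposition~\ref{PropFam1} already records $\ell(M_{3,1,1,1})=\ell(M_{3,2,1})=4<6$.
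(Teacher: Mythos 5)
Your proposal is correct and follows the same route as the paper: both arguments use the bijection $f$ of Claim~\ref{ClaimRnonemptyfunc} to force $B(x)=N^2(x)$ and $N^3(x)=A_1(x)=\{w^*,w\}$, and then split on whether $w^*w\in E$ to obtain $M_{3,2,1}$ or $M_{3,1,1,1}$. You merely spell out the details (in particular, why every vertex of $N^3(x)$ must lie on a matching edge into $B(x)$) that the paper's one-line justification leaves implicit.
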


\begin{proof}
Since $|A_1(x)|=|N^2(x)|$, it must hold that $N^3(x)=A_1(x)$ and $N^2(x)=B_1(x)$.  Let $A_1(x)=\{w^*,w\}$. If $w^*w\in E$, then $G$ is isomorphic to $M_{3,2,1}$, and if $w^*w\notin E$, then $G$ is isomorphic to $M_{3,1,1,1}$.
\end{proof}

\begin{claim}
If $(|A_1(x)|,|N^2(x)|)=(3,3)$, then $G$ is isomorphic to $M_{4,1,1,1,1},M_{4,2,1,1}$ or $M_{4,3,1}$. 
\end{claim}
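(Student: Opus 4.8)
The plan is to first pin down the global structure of $G$ and reduce everything to the adjacencies inside $N^3(x)$, then handle a short four-way case analysis. Since $(|A_1(x)|,|N^2(x)|)=(3,3)$, the count displayed just before the claim equals $n-1$, so $\ell(G)=n-1$ and $\mathcal{L}(G)=\mathcal{L}^x_2\cup\mathcal{L}^x_3\cup L_{A_2}\cup L_2\cup\{\overline{xu}\}$. First I would observe that $B_1(x)=f(A_1(x))$ has size $3=|N^2(x)|$ and is contained in $N^2(x)$, so $B_1(x)=N^2(x)$; as $B_1(x)$ and $B_2(x)=f(A_2(x))$ are disjoint, this forces $B_2(x)=A_2(x)=\emptyset$ and hence $L_{A_2}=\emptyset$. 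Writing $A_1(x)=\{w_1,w_2,w_3\}$ and $v_i=f(w_i)$, we get $N^2(x)=\{v_1,v_2,v_3\}$, $G[N^2(x)+u]$ is a $K_4$, and by Claim \ref{ClaimRnonemptyfunc} the only edges between $N^2(x)$ and $N^3(x)$ are $w_1v_1,w_2v_2,w_3v_3$.

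Next I would rule out extra vertices in $N^3(x)$: every vertex of $N^3(x)$ lies in $N^2(u)$, so it has a neighbour in $N(u)$; since $N^3(x)\cap N(u)=\emptyset$ and $N^2(x)\subseteq N(u)$, that neighbour lies in $N^2(x)$, which by the edge description above is possible only if the vertex is one of $w_1,w_2,w_3$. Hence $N^3(x)=\{w_1,w_2,w_3\}$, $n=8$, and $G$ is now determined up to the induced subgraph $H=G[\{w_1,w_2,w_3\}]$: the vertices are $x,u,v_1,v_2,v_3,w_1,w_2,w_3$, with $x$ pendant at $u$, $\{u,v_1,v_2,v_3\}$ complete, $w_iv_i\in E$, and all remaining pairs non-adjacent except possibly within $\{w_1,w_2,w_3\}$. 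If $H$ is edgeless, a single edge, or a triangle, then $G$ is isomorphic to $M_{4,1,1,1,1}$, $M_{4,2,1,1}$, or $M_{4,3,1}$ respectively: one takes $K_p=\{u,v_1,v_2,v_3\}$ and the matched complete blocks to be $\{x\}$ together with the connected components of $H$, whose sizes sum to $1+3=4=p$. These identifications are routine, and I would only write out the matching in each case.

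It then remains to exclude $H$ being a path, say $w_1\sim w_2\sim w_3$ with $w_1w_3\notin E$. Here I would use that $\mathcal{L}(G)=\mathcal{L}^x_2\cup\mathcal{L}^x_3\cup L_2\cup\{\overline{xu}\}$ and that every line in this set other than the three lines $\overline{v_1v_2},\overline{v_1v_3},\overline{v_2v_3}$ (which are distinct by Lemma \ref{LemmaIndCom}) contains $x$; hence the only lines of $G$ avoiding $x$ are these three. Now $w_1,w_3\in N^3(x)$ gives $x\notin\overline{w_1w_3}$, so $\overline{w_1w_3}$ is one of them; since $w_i$ is adjacent to $v_i$ only among $v_1,v_2,v_3$, we have $d(w_i,v_j)=2$ for $j\neq i$, and Lemma \ref{Lemmadxdy} forces $\overline{w_1w_3}=\overline{v_1v_3}$. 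But in the path $H$ the vertices $w_1$ and $w_3$ have $w_2$ as their unique common neighbour, so $d(w_1,w_3)=2$ with geodesic $w_1w_2w_3$ and thus $w_2\in\overline{w_1w_3}$; on the other hand $d(w_2,v_1)=d(w_2,v_3)=2$, so $w_2\notin\overline{v_1v_3}$ by Lemma \ref{Lemmadxdy} --- a contradiction. Therefore $H$ is not a path, and the proof is complete.

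I expect the only genuine obstacle to be arranging the structural part cleanly, above all extracting the exact edge set between $N^2(x)$ and $N^3(x)$ from Claim \ref{ClaimRnonemptyfunc}; once that is in hand, both the determination $N^3(x)=\{w_1,w_2,w_3\}$ and the exclusion of the path come down to short applications of Lemmas \ref{LemmaNi}, \ref{Lemmadxdy} and \ref{LemmaIndCom}.
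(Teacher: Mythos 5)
Your proposal is correct and follows essentially the same route as the paper: identify $\mathcal{L}(G)=\mathcal{L}^x_2\cup\mathcal{L}^x_3\cup L_2\cup\{\overline{xu}\}$, deduce $N^3(x)=A_1(x)$ and the matching structure between $N^2(x)$ and $N^3(x)$, observe that the three non-path options for $G[N^3(x)]$ give the three listed graphs, and rule out the path by exhibiting a line avoiding $x$ that cannot lie in $L_2$. The only (immaterial) difference is in the last step: the paper shows directly that $\overline{w^*w'}$ meets $N^2(x)$ in at most one vertex and hence is not in $L_2$, whereas you first force $\overline{w_1w_3}=\overline{v_1v_3}$ and then contradict via the middle vertex $w_2$.
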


\begin{proof}
In this case $\ell(G)=n-1$ and $\Li(G)=\Li^x_2\cup \Li^x_3\cup L_{A_2}\cup L_2\cup \{\li{xu}\}$. Moreover, as $|A_1(x)|=|N^2(x)|$, we have that $N^3(x)=A_1(x)$ and $N^2(x)=B_1(x)$. Let $A_1=\{w^*,w,w'\}$, $v=f(w)$ and $v'=f(w')$, thus $N^2(x)=\{v^*,v,v'\}$. To prove the claim, it is sufficient to show that $|E(G[N^3(x)])|\neq 2$. We proceed by contradiction, supposing that $|E(G[N^3(x)])|=2$. We may assume that $E(G[N^3(x)])=\{w^*w,ww'\}$, and we consider the line $l=\li{w^*w'}$. Since $d(w^*,w')=2$, we obtain that $l\cap \left (N^2(x)-v^*\right )=\emptyset$, therefore $l\notin  L_2$. Observing that $x\notin l$, we conclude that $l\notin \Li(G)$, leading to a contradiction.
\end{proof}

This concludes the proof of Proposition \ref{PropR1A_1nonempty}. We will now prove the following Proposition.

\begin{prop}
If $A_1(x)=\emptyset$, then $G$ is isomorphic to $G_d$.
\label{PropR1nonemptyA1empty}
\end{prop}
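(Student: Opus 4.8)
The plan is to push the line count computed just before Proposition~\ref{PropR1A_1nonempty} to its limit and then recover $G$ completely from the single universal line $\li{xu}$. First I would record the explicit form of the lines involved, all of which follow from $d(x)=1$, Lemma~\ref{LemmaLineDiam} and Claim~\ref{ClaimRnonemptyfunc}. Since $x$ is a leaf, $N(u)=\{x\}\cup N^2(x)$ and $N^3(x)\subseteq N^2(u)$, so $d(w,u)=2$ for every $w\in N^3(x)$. For $v\in N^2(x)$ one has $\li{xv}=\{x,u,v\}\cup\bigl(N^3(x)\cap N(v)\bigr)$; for $w\in N^3(x)$, $\li{xw}=\{x,u,w\}\cup\bigl(N^2(x)\cap N(w)\bigr)$, which for $w\in A(x)$ simplifies to $\li{xw}=\{x,u,w,f(w)\}$ since $N(w)\cap N^2(x)=\{f(w)\}$; and, checking the three betweenness relations, $\li{wu}=\{x,u,w,f(w)\}\cup\{v\in N^2(x)\setminus\{f(w)\}:d(w,v)=3\}$ for $w\in A(x)$, so in particular $\li{wu}\cap N^3(x)=\{w\}$.

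Since $A_1(x)=\emptyset$ we have $B_1(x)=\emptyset$, so $\Li^x_2\cup\Li^x_3\cup L_{A_2}\cup L^*$ has exactly $n-3+|N^2(x)|$ lines, and $\ell(G)\le n-1$ forces $|N^2(x)|\le 2$. If $|N^2(x)|=1$, say $N^2(x)=\{v^*\}$, then $N^3(x)\subseteq N(v^*)$ while $N(v^*)\cap N^3(x)=\{w^*\}$ by Claim~\ref{ClaimRnonemptyfunc}, hence $N^3(x)=\{w^*\}$, $n=4$, and $G\cong P_4=M_{2,1,1}$; but a leaf $x$ of $P_4$ satisfies $A_1(x)\neq\emptyset$, contradicting the hypothesis. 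Thus $|N^2(x)|=2$, so $\ell(G)=n-1$ and $\Li(G)=\Li^x_2\cup\Li^x_3\cup L_{A_2}\cup L^*$; moreover $x\in R$ gives $A(x)\neq\emptyset$, so $|N^3(x)|\ge 1$ and $n\ge 5$.

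I would then locate $V=\li{xu}$ in this list. It is not in $L^*$ (whose lines omit $x$), nor in $\Li^x_2$ (since $\li{xv}=V$ would force $|N^2(x)|=1$), nor in $\Li^x_3$ (since $\li{xw}=V$ would force $N^3(x)=\{w\}$ together with $N^2(x)\subseteq N(w)$, impossible because then $w\in A(x)$ has only $f(w)$ as a neighbour in $N^2(x)$ while $|N^2(x)|=2$). Hence $V\in L_{A_2}$, that is, $\li{wu}=V$ for some $w\in A(x)$; write $N^2(x)=\{f(w),v'\}$. By the formula above $\li{wu}\subseteq\{x,u,w,f(w),v'\}$, so $n\le 5$ and thus $n=5$, $N^3(x)=\{w\}$ (in particular $A(x)=\{w\}$), and $v'\in\li{wu}=V$ forces $d(w,v')=3$. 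Since $N^3(x)=\{w\}$, the vertex $w$ is a leaf whose only neighbour is $f(w)$, so $d(w,v')=3$ forces $f(w)\not\sim v'$; together with $N(u)=\{x,f(w),v'\}$ and $x$ being a leaf, this gives $E(G)=\{xu,\ uf(w),\ uv',\ f(w)w\}$, a tree with a single degree-three vertex ($u$) adjacent to two leaves ($x,v'$) and to a degree-two vertex ($f(w)$) whose remaining neighbour is a leaf ($w$), which is precisely the graph $G_d$.

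The delicate point is the bookkeeping in the line count: one must check that the four families $\Li^x_2$, $\Li^x_3$, $L_{A_2}$, $L^*$ overlap exactly as counted---the only overlap being $\Li^x_2\cap\Li^x_3$, governed by the bijection $f$ of Claim~\ref{ClaimRnonemptyfunc}, with $L_{A_2}$ and $L^*$ disjoint from the rest---and that no edge among the five vertices $x,u,f(w),v',w$ beyond the four listed is present. Both facts reduce to the explicit line descriptions above together with Lemmas~\ref{LemmaNi}--\ref{LemmaIndCom}; with those in hand, all that remains is a short finite verification.
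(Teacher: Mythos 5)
Your proposal is correct and follows essentially the same route as the paper: use the count $n-3+|N^2(x)|$ (with $B_1(x)=\emptyset$) to force $|N^2(x)|=2$ and $\ell(G)=n-1$, rule out $V=\li{xu}$ from $\Li^x_2\cup\Li^x_3\cup L^*$, place it in $L_{A_2}$, and read off the structure of $G_d$. Your explicit formula for $\li{wu}$ and the more detailed exclusion of $V$ from $\Li^x_3$ are just expanded versions of the paper's terser argument.
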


\subsubsection*{Proof of Proposition \ref{PropR1nonemptyA1empty}}

We now assume that $A_1(x)=\emptyset$. If $|N^2(x)|=1$, $G$ is isomorphic to $M_{2,1,1}$ with $A_1(x)\neq \emptyset$, thus it must hold that $|N^2(x)|\geq 2$. Clearly $w^*\in A_2(x)$, that is, $\li{xv^*}=\li{xw^*}\neq \li{w^*u}$. The set $\Li^x_2\cup \Li^x_3\cup L_{A_2} \cup L^*$ contains $n-3+ |N^2(x)|$ lines. It follows that $|N^2(x)|=2$ and $\ell(G)=n-1$. Let $N^2(x)=\{v^*,v\}$, hence $L^*=\{\li{w^*v}\}$ and $\Li(G)=\Li^x_2\cup \Li^x_3\cup L_{A_2}\cup \{\li{w^*v}\}$. As $v\notin \li{xv^*}=\li{xw^*}$, we have that $V\notin \Li^x_2\cup \Li^x_3$, and since $x\notin \li{w^*v}$, the line  $V=\li{xu}$ must belong to $L_{A_2}$. This implies that $\li{w^*u}=V$, $N^3(x)=\{w^*\}$ and $d(w^*,v)=3$, concluding that $G$ is isomorphic to $G_d$. 

\subsection{\texorpdfstring{$R_1$}{R1} is empty}

In this section we assume that $R_1=\emptyset$, that is, $\forall\, x\in R$, $d(x)\geq 2$. Let $x$ be a vertex in $R$ maximizing $|A(\cdot )|$. 

\begin{prop}
If $A_1(x)\neq \emptyset$, then $G$ is isomorphic to $M_{4,2,2}$.
\label{PropR1emptyA1nonempty}
\end{prop}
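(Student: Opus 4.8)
Fix $w^*\in A_1(x)$, put $v^*=f(w^*)\in N^2(x)$ (so $\li{xv^*}=\li{xw^*}$ and $v^*w^*\in E$ by Claim \ref{ClaimRnonemptyfunc}), and choose $u^*\in N(x)$ with $\li{w^*u^*}=\li{xw^*}$. Since $x\in\li{w^*u^*}$ and $d(x,w^*)=3$, the only admissible betweenness is $[xu^*w^*]$, so $d(u^*,w^*)=2$; hence $\li{w^*u^*}=\li{w^*x}$ with $u^*\in N^2(w^*)$, so $u^*$ is the unique vertex of $N^2(w^*)$ with $\li{w^*u^*}=\li{w^*x}$ and, by Claim \ref{ClaimRnonemptyfunc}, $u^*v^*\in E$ and $N(x)\cap N^2(w^*)=\{u^*\}$. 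By Lemma \ref{LemmaNi} and Claim \ref{ClaimRnonemptyfunc} also $N(w^*)\cap N^2(x)=\{v^*\}$, so every shortest $x$–$w^*$ path passes through $v^*$ and through a vertex of $N(x)\cap N(v^*)$, and Lemma \ref{LemmaLineDiam} gives $\li{xw^*}=\{x,w^*,v^*\}\cup(N(x)\cap N(v^*))$. The condition defining $A_1$ in Section \ref{sec:Prelim}, namely $\li{xw}\in\Li^x_2\cap\Li^w_2$, is symmetric in $x$ and $w$, so $x\in A_1(w^*)$; in particular $w^*\in R$, and since $R_1=\emptyset$ we have $d(w^*)\ge2$, and by maximality of $|A(\cdot)|$ at $x$ we have $|A(w^*)|\le|A(x)|$. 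Exchanging $x$ and $w^*$ in the previous description yields $\li{xw^*}=\li{w^*x}=\{x,w^*,u^*\}\cup(N(w^*)\cap N(u^*))$, so the ``core'' $C:=\li{xw^*}\setminus\{x,w^*\}$ contains $u^*,v^*$ with $C\setminus\{v^*\}=N(x)\cap N(v^*)$ and $C\setminus\{u^*\}=N(w^*)\cap N(u^*)$; every $c\in C\setminus\{u^*,v^*\}$ is adjacent to each of $x,v^*,w^*,u^*$.

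\textbf{Bounding $n$.} All lines in $\Li^x_2\cup\Li^x_3\cup L_{A_2}$ contain $x$, and by Claim \ref{ClaimRnonemptyfunc} and the counting already done in Section \ref{sec:R1nonempty} there are at least $|N^2(x)|+|N^3(x)|-|A_1(x)|$ of them. For distinct $v,v'\in N^2(x)$ none of $[vxv']$, $[xvv']$, $[xv'v]$ can hold (each would force a distance from $x$ to be $1$ or $4$), so $x\notin\li{vv'}$. I would first prove that $G[N^2(x)]$ is complete, by the same mechanism as the corresponding claim in Section \ref{sec:R1nonempty}: a non-edge $vv'$ in $N^2(x)$ yields through $\li{vv'}$ a line avoiding both $x$ and $w^*$ (note $w^*$ meets $\{v,v'\}$ only if $v^*\in\{v,v'\}$), and the resulting count exceeds $n-1$ unless a rigid configuration occurs which one then contradicts. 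Granting this, Lemma \ref{LemmaIndCom} gives $\binom{|N^2(x)|}{2}$ pairwise distinct lines $\li{vv'}$, none through $x$, and a short case check produces one further line through $x$ not among those above (for instance $\li{xu^*}$, which contains $x$ and $w^*$ but differs from $\li{xw^*}$ as soon as some $z\in N^2(x)-v^*$ is adjacent to $u^*$ or at distance $3$ from it; otherwise $\li{w^*v^*}$, or the universal line, serves). Hence $\ell(G)\ge\binom{|N^2(x)|}{2}+|N^2(x)|+|N^3(x)|-|A_1(x)|+1$, and combining with $\ell(G)\le n-1=d(x)+|N^2(x)|+|N^3(x)|$ and $|A_1(x)|\le\min(|N^2(x)|,|N^3(x)|)$ gives $\tfrac12(|N^2(x)|-1)(|N^2(x)|-2)\le d(x)$. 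Running the analogous count at $w^*$ (legitimate since $d(w^*)\ge2$ and $x\in A_1(w^*)$) and using $|A(w^*)|\le|A(x)|$ together with a finer bookkeeping of the lines $\li{xu}$, $u\in N(x)$, pins $d(x)$ down, forcing $d(x),|N^2(x)|,|N^3(x)|,|A_1(x)|,|A_2(x)|$ into a short list; I expect this to collapse to $d(x)=2$, $|N^2(x)|=3$, $|N^3(x)|=2$, $|A_1(x)|=2$, $A_2(x)=\emptyset$, whence $n=8$ and $\ell(G)=7$ (the degenerate cases $|N^2(x)|\le1$ being handled separately, as they do not yield $M_{4,2,2}$).

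\textbf{Reconstruction.} With these parameters $G[N^2(x)]$ is a triangle, $N^3(x)=A_1(x)=\{w^*,w'\}$ with $f$-partners $v^*,v'$ in $N^2(x)=B_1(x)$ (the third vertex of $N^2(x)$ being the unique element of $N^2(x)\setminus B_1(x)$), and $N(x)=\{u^*,\bar u\}$. The identities $\li{xw^*}=\{x,w^*,v^*\}\cup(N(x)\cap N(v^*))$, $\li{xw'}=\{x,w',v'\}\cup(N(x)\cap N(v'))$, the edges forced by Claim \ref{ClaimRnonemptyfunc}, the diameter-$3$ requirement, and $\ell(G)=n-1$ determine all remaining adjacencies; the outcome is $M_{4,2,2}$, which by Proposition \ref{PropFam1} realises exactly this configuration with $x$ a degree-$2$ vertex of a pendant $K_2$.

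\textbf{Main obstacle.} The crux is the middle step: the inequality $\tfrac12(|N^2(x)|-1)(|N^2(x)|-2)\le d(x)$ does not by itself limit $d(x)$, so one must extract extra lines through $x$ from a large neighbourhood (or play the symmetric count at $w^*$ against the maximality of $|A(x)|$) to close the argument; and once the parameters are confined to a short list, eliminating the remaining tuples — in particular proving $A_2(x)=\emptyset$ and that $G[N^2(x)]$ is genuinely complete rather than a clique together with $N^2(x)$-isolated vertices — is where essentially all of the casework lies.
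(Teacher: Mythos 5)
Your setup paragraph is sound and matches the paper's opening moves (in particular $N^2(w^*)\cap N(x)=\{u^*\}$, $N^3(w^*)\cap N(x)=N(x)-u^*$, the symmetry $x\in A_1(w^*)$, hence $d(w^*)\geq 2$). But the quantitative heart of the argument is missing, and you concede as much in your closing paragraph. Your only inequality, $\tfrac12(|N^2(x)|-1)(|N^2(x)|-2)\leq d(x)$, bounds $|N^2(x)|$ in terms of $d(x)$ and leaves $d(x)$, $|N^3(x)|$ and $|A_1(x)|$ essentially unconstrained, so the "short list" of parameter tuples is neither short nor exhibited, and no mechanism is given that actually forces $d(x)=2$ and $|A_1(x)|=2$. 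The paper's essential idea here --- absent from your proposal --- is to exploit the fact you already derived, namely $N^3(w)\cap N(x)=N(x)-u_w$ for each $w\in A_1(x)$, to build the family $L_{A_1}=\{\li{wu'}:w\in A_1(x),\, u'\in N^3(w)\cap N(x)\}$ of exactly $|A_1(x)|\cdot(|N(x)|-1)$ pairwise distinct lines, none containing $x$; added to $\Li^x_2\cup\Li^x_3\cup L_{A_2}\cup\{\li{xu^*}\}$ this yields $(|A_1(x)|-1)\cdot(|N(x)|-2)\leq 1$, the inequality that genuinely couples $|A_1(x)|$ with $d(x)$ and collapses the case analysis. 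Without something of this kind, the proposed "finer bookkeeping of the lines $\li{xu}$, $u\in N(x)$" cannot close the gap: there are only $d(x)$ such lines, so they cannot offset a large $d(x)$ in a count against $n-1$.

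Two further gaps. First, the completeness of $G[N^2(x)]$ is not obtained by "the same mechanism" as in Section \ref{sec:R1nonempty}: there the argument rested on $N^2(x)-v^*\subseteq N^2(w^*)$, which came from $N^2(x)\subseteq N(u)$ for the unique neighbour $u$ of $x$; when $d(x)\geq 2$ this inclusion is unavailable, and the paper in fact only establishes that $G[N^2(x)]$ is complete at the very end, after the whole structure has been determined. Second, eliminating $|A_1(x)|=1$ is precisely where the maximality of $|A(\cdot)|$ at $x$ is used, via a separate and delicate argument producing a line $\li{w^*v'}\in\Li^{w^*}_2$ outside the assembled family; your sketch never addresses this case (your inequality admits, for instance, $|A_1(x)|=1$ with $|N^2(x)|=2$ and arbitrary $d(x)$), so the expected collapse to the $M_{4,2,2}$ parameters is not justified.
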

\subsubsection*{Proof of Proposition \ref{PropR1emptyA1nonempty}}

We assume that $A_1(x)\neq \emptyset$. Consider $w\in A_1(x)$, $v=f(w)$ and $u\in N(x)$ such that $\li{wu}=\li{xv}=\li{xw}$. Note that $\li{wu}=\li{wx}\in \Li^w_2 \cap \Li^w_3$, thus $x\in A(w)$ and $u\in B(w)$, in particular, $w\in R$ and $d(w)\geq 2$. Since $x\in \li{wu}$, we obtain that $d(w,u)=2$ and $vu\in E$. We observe that $\left (N(v)-u\right )\cap N(x)\subseteq \li{xw}\setminus\li{wu}=\emptyset$, therefore $N(v)\cap N(x)=\{u\}$. In particular, $N^2(w)\cap N(x)=\{u\}$, $N^3(w)\cap N(x)=N(x)-u$ and $|N^3(w)\cap N(x)|=|N(x)|-1$. Since $N(u)\cap N(x) \subseteq \li{wu}\setminus \li{xw}=\emptyset$ and $N^3(v)\cap N(x)\subseteq \li{xv}\setminus \li{xw}=\emptyset$, it follows that $N(u)\cap N(x)=\emptyset$ and $N(x)-u\subseteq N^2(v)$. Vertices $w,v$ and $u$ are illustrated in the following diagram.
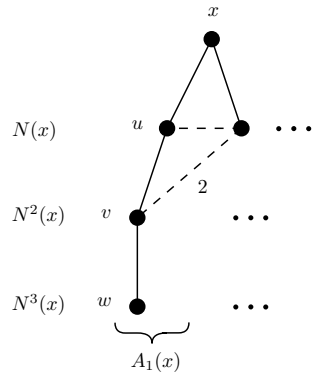
\begin{figure}[H]
\begin{center}      
\scalebox{.75}{\tikzset{every picture/.style={line width=0.75pt}} 

\begin{tikzpicture}[x=0.75pt,y=0.75pt,yscale=-1,xscale=1]

\draw  [fill={rgb, 255:red, 0; green, 0; blue, 0 }  ,fill opacity=1 ] (270,45) .. controls (270,42.24) and (272.24,40) .. (275,40) .. controls (277.76,40) and (280,42.24) .. (280,45) .. controls (280,47.76) and (277.76,50) .. (275,50) .. controls (272.24,50) and (270,47.76) .. (270,45) -- cycle ;
\draw  [fill={rgb, 255:red, 0; green, 0; blue, 0 }  ,fill opacity=1 ] (240,105) .. controls (240,102.24) and (242.24,100) .. (245,100) .. controls (247.76,100) and (250,102.24) .. (250,105) .. controls (250,107.76) and (247.76,110) .. (245,110) .. controls (242.24,110) and (240,107.76) .. (240,105) -- cycle ;
\draw    (275,45) -- (245,105) ;
\draw  [fill={rgb, 255:red, 0; green, 0; blue, 0 }  ,fill opacity=1 ] (220,165.14) .. controls (220,162.38) and (222.24,160.14) .. (225,160.14) .. controls (227.76,160.14) and (230,162.38) .. (230,165.14) .. controls (230,167.9) and (227.76,170.14) .. (225,170.14) .. controls (222.24,170.14) and (220,167.9) .. (220,165.14) -- cycle ;
\draw  [fill={rgb, 255:red, 0; green, 0; blue, 0 }  ,fill opacity=1 ] (220,225) .. controls (220,222.24) and (222.24,220) .. (225,220) .. controls (227.76,220) and (230,222.24) .. (230,225) .. controls (230,227.76) and (227.76,230) .. (225,230) .. controls (222.24,230) and (220,227.76) .. (220,225) -- cycle ;
\draw    (225,165.14) -- (225,225) ;
\draw   (210,235.8) .. controls (210.01,240.47) and (212.35,242.79) .. (217.02,242.77) -- (224.86,242.75) .. controls (231.53,242.73) and (234.87,245.05) .. (234.89,249.72) .. controls (234.87,245.05) and (238.19,242.71) .. (244.86,242.68)(241.86,242.69) -- (252.7,242.66) .. controls (257.37,242.64) and (259.69,240.3) .. (259.68,235.63) ;
\draw    (245,105) -- (225,165.14) ;
\draw  [fill={rgb, 255:red, 0; green, 0; blue, 0 }  ,fill opacity=1 ] (290,105) .. controls (290,102.24) and (292.24,100) .. (295,100) .. controls (297.76,100) and (300,102.24) .. (300,105) .. controls (300,107.76) and (297.76,110) .. (295,110) .. controls (292.24,110) and (290,107.76) .. (290,105) -- cycle ;
\draw  [fill={rgb, 255:red, 0; green, 0; blue, 0 }  ,fill opacity=1 ] (321.29,105.25) .. controls (321.29,104.59) and (320.76,104.05) .. (320.09,104.05) .. controls (319.43,104.05) and (318.89,104.59) .. (318.89,105.25) .. controls (318.89,105.91) and (319.43,106.45) .. (320.09,106.45) .. controls (320.76,106.45) and (321.29,105.91) .. (321.29,105.25) -- cycle ;
\draw  [fill={rgb, 255:red, 0; green, 0; blue, 0 }  ,fill opacity=1 ] (331.42,105.2) .. controls (331.42,104.54) and (330.88,104) .. (330.22,104) .. controls (329.55,104) and (329.02,104.54) .. (329.02,105.2) .. controls (329.02,105.86) and (329.55,106.4) .. (330.22,106.4) .. controls (330.88,106.4) and (331.42,105.86) .. (331.42,105.2) -- cycle ;
\draw  [fill={rgb, 255:red, 0; green, 0; blue, 0 }  ,fill opacity=1 ] (341.43,105.25) .. controls (341.43,104.59) and (340.89,104.05) .. (340.23,104.05) .. controls (339.57,104.05) and (339.03,104.59) .. (339.03,105.25) .. controls (339.03,105.91) and (339.57,106.45) .. (340.23,106.45) .. controls (340.89,106.45) and (341.43,105.91) .. (341.43,105.25) -- cycle ;
\draw  [dash pattern={on 4.5pt off 4.5pt}]  (295,105) -- (245,105) ;
\draw  [dash pattern={on 4.5pt off 4.5pt}]  (295,105) -- (225,165.14) ;
\draw    (275,45) -- (295,105) ;
\draw  [fill={rgb, 255:red, 0; green, 0; blue, 0 }  ,fill opacity=1 ] (292.4,225.25) .. controls (292.4,224.59) and (291.86,224.05) .. (291.2,224.05) .. controls (290.54,224.05) and (290,224.59) .. (290,225.25) .. controls (290,225.91) and (290.54,226.45) .. (291.2,226.45) .. controls (291.86,226.45) and (292.4,225.91) .. (292.4,225.25) -- cycle ;
\draw  [fill={rgb, 255:red, 0; green, 0; blue, 0 }  ,fill opacity=1 ] (302.52,225.2) .. controls (302.52,224.54) and (301.98,224) .. (301.32,224) .. controls (300.66,224) and (300.12,224.54) .. (300.12,225.2) .. controls (300.12,225.86) and (300.66,226.4) .. (301.32,226.4) .. controls (301.98,226.4) and (302.52,225.86) .. (302.52,225.2) -- cycle ;
\draw  [fill={rgb, 255:red, 0; green, 0; blue, 0 }  ,fill opacity=1 ] (312.54,225.25) .. controls (312.54,224.59) and (312,224.05) .. (311.34,224.05) .. controls (310.68,224.05) and (310.14,224.59) .. (310.14,225.25) .. controls (310.14,225.91) and (310.68,226.45) .. (311.34,226.45) .. controls (312,226.45) and (312.54,225.91) .. (312.54,225.25) -- cycle ;
\draw  [fill={rgb, 255:red, 0; green, 0; blue, 0 }  ,fill opacity=1 ] (292.4,165.25) .. controls (292.4,164.59) and (291.86,164.05) .. (291.2,164.05) .. controls (290.54,164.05) and (290,164.59) .. (290,165.25) .. controls (290,165.91) and (290.54,166.45) .. (291.2,166.45) .. controls (291.86,166.45) and (292.4,165.91) .. (292.4,165.25) -- cycle ;
\draw  [fill={rgb, 255:red, 0; green, 0; blue, 0 }  ,fill opacity=1 ] (302.52,165.2) .. controls (302.52,164.54) and (301.98,164) .. (301.32,164) .. controls (300.66,164) and (300.12,164.54) .. (300.12,165.2) .. controls (300.12,165.86) and (300.66,166.4) .. (301.32,166.4) .. controls (301.98,166.4) and (302.52,165.86) .. (302.52,165.2) -- cycle ;
\draw  [fill={rgb, 255:red, 0; green, 0; blue, 0 }  ,fill opacity=1 ] (312.54,165.25) .. controls (312.54,164.59) and (312,164.05) .. (311.34,164.05) .. controls (310.68,164.05) and (310.14,164.59) .. (310.14,165.25) .. controls (310.14,165.91) and (310.68,166.45) .. (311.34,166.45) .. controls (312,166.45) and (312.54,165.91) .. (312.54,165.25) -- cycle ;

\draw (271,22.4) node [anchor=north west][inner sep=0.75pt]    {$x$};
\draw (139.29,98.83) node [anchor=north west][inner sep=0.75pt]    {$N( x)$};
\draw (139.04,155.08) node [anchor=north west][inner sep=0.75pt]    {$N^{2}( x)$};
\draw (139.04,215.08) node [anchor=north west][inner sep=0.75pt]    {$N^{3}( x)$};
\draw (199.1,158.29) node [anchor=north west][inner sep=0.75pt]    {$v$};
\draw (196.52,217.54) node [anchor=north west][inner sep=0.75pt]    {$w$};
\draw (219.08,254.11) node [anchor=north west][inner sep=0.75pt]    {$A_{1}( x)$};
\draw (219.79,98.21) node [anchor=north west][inner sep=0.75pt]    {$u$};
\draw (264.22,138.4) node [anchor=north west][inner sep=0.75pt]    {$2$};

\end{tikzpicture}}
\end{center}
\caption{Diagram of $G$. Dashed lines represent missing edges. Numbers indicate distances between the corresponding vertices.} \label{fig:R1emptyDiagram1}
\end{figure}

We now consider the set of lines $L_{A_1}=\{\li{wu'}: w\in A_1(x), \, u'\in N^3(w)\cap N(x)\}$. As no line in $L_{A_1}$ contains $x$, the set $L_{A_1}$ is disjoint from $\Li^x_2, \Li^x_3$ and $L_{A_2}$.

\begin{claim}
     $|L_{A_1}|=|A_1(x)|\cdot |N(x)|-|A_1(x)|$.
\end{claim}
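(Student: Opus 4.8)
The claim asserts a count of lines in the family $L_{A_1}=\{\overline{wu'}: w\in A_1(x),\, u'\in N^3(w)\cap N(x)\}$. The plan is to proceed in two stages: first fix a single $w\in A_1(x)$ and count the lines $\overline{wu'}$ as $u'$ ranges over $N^3(w)\cap N(x)$, and then show that lines coming from distinct $w,w'\in A_1(x)$ are always distinct, so that the total is the sum of the per-$w$ counts. From the structural analysis preceding the claim we already know that $N^3(w)\cap N(x)=N(x)-u_w$, where $u_w$ is the unique neighbor of $v_w=f(w)$ in $N(x)$; hence $|N^3(w)\cap N(x)|=|N(x)|-1$ for every $w\in A_1(x)$, and if both sub-counts go through we get $|L_{A_1}|=|A_1(x)|\cdot(|N(x)|-1)=|A_1(x)|\cdot|N(x)|-|A_1(x)|$, as claimed.

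**Step 1: lines from a fixed $w$ are pairwise distinct.** Fix $w\in A_1(x)$ and take distinct $u',u''\in N^3(w)\cap N(x)$. Since $N^3(w)\cap N(x)\subseteq N(x)$ and $N(x)$ is... well, I cannot assume $N(x)$ is independent, but I can use Lemma \ref{LemmaNi}: $u'\in N^3(w)$ and $u''\in N^3(w)$ are two distinct vertices at distance $3$ from $w$, so $u''\notin\overline{wu'}$, giving $\overline{wu'}\neq\overline{wu''}$. This immediately yields $|\{\overline{wu'}:u'\in N^3(w)\cap N(x)\}|=|N^3(w)\cap N(x)|=|N(x)|-1$.

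**Step 2: lines from distinct $w,w'$ are distinct.** Take distinct $w,w'\in A_1(x)$, with $u'\in N^3(w)\cap N(x)$ and $u''\in N^3(w')\cap N(x)$, and suppose for contradiction $\overline{wu'}=\overline{w'u''}=:l$. Since $w'\in l$ and $w'\in N^3(x)$, while $u'\in N(x)$ and $w\in N^3(x)$, we examine $d(w,w')$. If $d(w,w')\geq 2$, then $w'\in N^i(w)$ for $i\in\{2,3\}$, and also $u'$ must be handled; the clean route is to note that $v_{w'}=f(w')\in N(w')\subseteq$ — actually, recall from the structure that $v_{w'}\in N^2(x)$ and $v_{w'}w'\in E$ (edge of the matching in Claim \ref{ClaimRnonemptyfunc}). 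One checks $d(v_{w'},u')$: since $u'\in N(x)$ and $v_{w'}\in N^2(x)$ with $N(v_{w'})\cap N(x)=\{u_{w'}\}$, we have $d(v_{w'},u')\leq 2$, with equality unless $u'=u_{w'}$. When $d(v_{w'},u')=2$, Lemma \ref{Lemmadxdy}-type reasoning (or Lemma \ref{LemmaNi} applied at $w$, noting $v_{w'}\notin N^2(w)$ since $v_{w'}\neq v_w$ and $N(w)\cap N^2(x)=\{v_w\}$, so $d(w,v_{w'})\geq 2$) forces $v_{w'}\notin\overline{wu'}$, contradicting $v_{w'}\in\overline{w'u''}=l$. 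The remaining sub-case $u'=u_{w'}$ (and symmetrically $u''=u_w$) must be dispatched separately: here $u'\in N(v_{w'})$, but $u'\in N^3(w)$ forces $d(v_{w'},w)\geq 2$, so still $v_{w'}\notin\overline{wu'}$ by Lemma \ref{LemmaNi}, again contradicting $v_{w'}\in l$. The case $d(w,w')=1$ needs the additional observation that $w'\in N^3(x)$ and $u'\in N(x)$ give $\overline{wu'}$ a specific shape; since $w'\in\overline{wu'}$ with $d(w,w')=1$ one has $[wu'w']$, $[ww'u']$, or $[u'ww']$ — the first two are impossible as $d(w,u')=3=d(w',u')$ would be violated, and $[u'ww']$ needs $d(u',w')=d(u',w)+1=4>3$, impossible. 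So $d(w,w')=1$ cannot occur, completing this step.

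**Expected main obstacle.** The routine part is Step 1; the delicate part is Step 2, specifically the bookkeeping around the vertices $v_{w'}=f(w')$ and the exceptional index $u'=u_{w'}$, where one must carefully invoke the structural facts $N(v_{w'})\cap N(x)=\{u_{w'}\}$, $v_{w'}w'\in E$, and $N(w)\cap N^2(x)=\{v_w\}$ established just before the claim. I would organize Step 2 to first rule out $d(w,w')=1$ by a short distance argument, then in the case $d(w,w')\geq 2$ use $v_{w'}\in\overline{w'u''}\setminus\overline{wu'}$ as the universal witness, splitting only on whether $u'$ equals $u_{w'}$ or not to verify $v_{w'}\notin\overline{wu'}$ via Lemma \ref{LemmaNi}.
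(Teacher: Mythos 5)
Your overall decomposition (count the lines for each fixed $w$ via Lemma \ref{LemmaNi}, then show lines from distinct $w,w'\in A_1(x)$ never coincide) is the same as the paper's, and Step~1 is correct. The gap is in Step~2, precisely in the sub-case $u'=u_{w'}$ that you flag but do not correctly dispatch. There you claim that $d(v_{w'},w)\geq 2$ together with Lemma \ref{LemmaNi} gives $v_{w'}\notin\li{wu'}$; but Lemma \ref{LemmaNi} only excludes vertices lying at the \emph{same} distance from $w$ as $u'$, and here $v_{w'}$ may lie in $N^2(w)$ while $u'\in N^3(w)$. Indeed, if $d(w,v_{w'})=2$ and $u'=u_{w'}\in N(v_{w'})$, then $d(w,v_{w'})+d(v_{w'},u')=2+1=3=d(w,u')$, so $[wv_{w'}u']$ holds and $v_{w'}\in\li{wu'}$: your witness lies in the line and yields no contradiction. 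The same sub-case also undermines your dismissal of $d(w,w')=1$, which silently assumes $d(w',u')=3$; when $u'=u_{w'}$ one has $d(w',u')=2$ and $[ww'u']$ is perfectly consistent. Since the structure established before the claim only gives $N(w)\cap N^2(x)=\{v_w\}$, it rules out $d(w,v_{w'})=1$ but not $d(w,v_{w'})=2$, so this sub-case is not vacuous and the proof as written does not close.

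The paper avoids the issue by probing with $u''$ instead of $f(w')$: since $u''\in\li{w'u''}=\li{wu'}$ and $u''\neq u'$, Lemma \ref{LemmaNi} applied at $w$ forces $d(w,u'')=2$ and $u''u'\in E$; then $u''\in N^2(w)\cap N(x)=\{u_w\}$ gives $u''=u_w$, whence $u'\in N(u_w)\cap N(x)=\emptyset$, a contradiction. Alternatively, your own argument can be patched in the problematic sub-case by probing with $u'$ in the other line: $u'=u_{w'}$ gives $d(w',u')=2$, so $u'\in\li{w'u''}$ forces $u'u''\in E$, i.e., $u''\in N(u_{w'})\cap N(x)=\emptyset$. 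Either repair closes the gap; the rest of your write-up is sound.
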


\begin{proof}
For every $w\in A_1(x)$, it holds that $|N^3(w)\cap N(x)|= |N(x)|-1$, therefore it suffices to prove that for distinct $w,w'\in A_1(x)$, $u' \in N^3(w)\cap N(x)$ and $u'' \in N^3(w')\cap N(x)$, the lines $\li{wu'}$ and $\li{w'u''}$ are distinct. We proceed by contradiction, supposing that $\li{wu'}=\li{w'u''}$. It is immediate that $u'\neq u''$. Let $v=f(w)$ and $u\in N(x)$ such that $\li{wu}=\li{xv}=\li{xw}$. Given that $u''\in \li{wu'}$, we have that $d(w,u'')=2$ and $u''u'\in E$. This implies that $u''\in N^2(w)\cap N(x)=\{u\}$, thus $u''=u$ and $u'\in N(u)\cap N(x)=\emptyset$, which is a contradiction.
\end{proof}

Each vertex $w\in A_1(x)$ has an associated vertex $u\in N(x)$ such that $\li{wu}=\li{xw}$. We consider these vertices as the set $C=\{u\in N(x):\exists\, w\in A_1(x) \text{ such that } \li{wu}=\li{xw}\}$.

\begin{claim}
If $u^*\in C$, then $\li{xu^*}\notin \Li^x_2\cup \Li^x_3 \cup L_{A_1}\cup L_{A_2}$.
\label{R1emptyC}
\end{claim}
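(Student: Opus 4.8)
The plan is to fix $u^*\in C$, choose $w^*\in A_1(x)$ with $\li{w^*u^*}=\li{xw^*}$, and set $v^*=f(w^*)$, so that $\li{xv^*}=\li{xw^*}$ by Claim \ref{ClaimRnonemptyfunc}. Since $w^*\in A_1(x)$ and $\li{w^*u^*}=\li{xv^*}=\li{xw^*}$, the argument given at the start of the proof of Proposition \ref{PropR1emptyA1nonempty} applies verbatim to the triple $(w^*,v^*,u^*)$ in place of $(w,v,u)$ and yields $d(w^*,u^*)=2$, $v^*u^*\in E$, $N(v^*)\cap N(x)=\{u^*\}$, $N^2(w^*)\cap N(x)=\{u^*\}$, $N^3(w^*)\cap N(x)=N(x)-u^*$ and $N(u^*)\cap N(x)=\emptyset$. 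From these facts I would first record three memberships in $\li{xu^*}$. First, $N(x)\subseteq\li{xu^*}$: any $z\in N(x)-u^*$ is non-adjacent to $u^*$ (since $N(u^*)\cap N(x)=\emptyset$) hence at distance $2$ from $u^*$, so $[zxu^*]$; as $R_1=\emptyset$ this gives $|\li{xu^*}\cap N(x)|=d(x)\geq 2$. Second, $v^*\in\li{xu^*}$, because $v^*\in N^2(x)$ and $v^*u^*\in E$ give $d(x,v^*)=2=d(x,u^*)+d(u^*,v^*)$, that is, $[xu^*v^*]$. Third, $w^*\in\li{xu^*}$, because $d(x,w^*)=3=d(x,u^*)+d(u^*,w^*)$, that is, $[xu^*w^*]$.

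With this in hand, the cases $L_{A_1}$, $\Li^x_2$ and $\Li^x_3$ are quick. Every line of $L_{A_1}$ avoids $x$ (as already noted), while $x\in\li{xu^*}$, so $\li{xu^*}\notin L_{A_1}$. If $\li{xu^*}=\li{xv}$ for some $v\in N^2(x)$, then $v^*\in\li{xu^*}=\li{xv}$, and by Lemma \ref{LemmaNi} $\li{xv}\cap N^2(x)=\{v\}$, forcing $v=v^*$ and hence $\li{xu^*}=\li{xv^*}=\li{xw^*}$; but by Lemma \ref{LemmaLineDiam}, $\li{xw^*}\cap N(x)=\{u'\in N(x):d(u',w^*)=2\}=N^2(w^*)\cap N(x)=\{u^*\}$, which contradicts $|\li{xu^*}\cap N(x)|\geq 2$. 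Similarly, if $\li{xu^*}=\li{xw}$ for some $w\in N^3(x)$, then $w^*\in\li{xw}\cap N^3(x)=\{w\}$ by Lemma \ref{LemmaNi}, so $w=w^*$ and the same contradiction appears.

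The remaining case $L_{A_2}$ is the crux, since these lines do contain $x$. Suppose $\li{xu^*}=\li{wu}$ with $w\in A_2(x)$ and $u\in N^2(w)\cap N(x)$. As $w\in N^3(x)$ lies in $\li{xu^*}$, and the relations $[wxu^*]$, $[xwu^*]$ are impossible in a graph of diameter $3$, we must have $[xu^*w]$, that is, $d(u^*,w)=2$. Now $u^*\in\li{xu^*}=\li{wu}$ with $d(w,u^*)=2$: the relation $[u^*wu]$ would give $d(u^*,u)=4$, while $[wu^*u]$ and $[wuu^*]$ each force $u=u^*$; hence $u=u^*$ and $\li{xu^*}=\li{wu^*}$. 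Finally, $w^*\in\li{xu^*}=\li{wu^*}$; but $w^*\neq w$ (since $w^*\in A_1(x)$ and $w\in A_2(x)=A(x)\setminus A_1(x)$), and with $d(w,u^*)=d(w^*,u^*)=2$ none of $[w^*wu^*]$, $[ww^*u^*]$, $[wu^*w^*]$ can hold, so $w^*\notin\li{wu^*}$, a contradiction. Thus $\li{xu^*}\notin L_{A_2}$, finishing the claim.

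I expect the $L_{A_2}$ step to be the main obstacle: the easy "$x$ separates the two families" trick used for $L_{A_1}$ is unavailable, so one has to run the diameter-$3$ distance arithmetic twice — first to collapse $u$ onto $u^*$, and then to exhibit the impossible configuration of two distinct vertices of $N^3(x)$, both at distance $2$ from $u^*$, lying on a common line through $u^*$.
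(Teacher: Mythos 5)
Your proposal is correct and follows essentially the same route as the paper's proof: establish $v^*,w^*\in\li{xu^*}$ together with $N(x)\subseteq\li{xu^*}$, rule out $\Li^x_2\cup\Li^x_3$ via Lemma \ref{LemmaNi} and the fact that $\li{xw^*}$ meets $N(x)$ only in $u^*$, rule out $L_{A_1}$ because its lines avoid $x$, and for $L_{A_2}$ derive $d(w,u^*)=2$, collapse $u$ onto $u^*$, and contradict $w^*\in\li{wu^*}$ (the paper invokes Lemma \ref{Lemmadxdy} where you do the distance arithmetic by hand). No gaps.
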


\begin{proof}
Consider $u^*\in C$, $w^*\in A_1(x)$ and $v^*=f(w^*)$ such that $\li{w^*u^*}=\li{xv^*}=\li{xw^*}$. Recall that $d(w^*,u^*)=2$,  $v^*u^*\in E$, $N(u^*)\cap N(x)=\emptyset$ and $N(x)-u^*\subseteq N^2(v^*)$. The fact that $N(x)-u^*\subseteq \li{xu^*}\setminus \li{xv^*}$, together with $d(x)\geq 2$, implies that $\li{xu^*}\neq \li{xv^*}=\li{xw^*}$. As $v^*,w^*\in \li{xu^*}$, the line $\li{xu^*}$ does not belong to $\Li^x_2\cup \Li^x_3$. The line $\li{xu^*}$ contains $x$, thus it also does not belong to $L_{A_1}$. It remains to show that $\li{xu^*}\notin L_{A_2}$. By contradiction, suppose that $\li{xu^*}=\li{wu}\in L_{A_2}$. Since $w\in \li{xu^*}$, we obtain that $d(w,u^*)=2$ and consequently $u=u^*$, leading to the contradiction $w^*\in \li{xu^*}\setminus \li{wu}$.
\end{proof}

We now fix $w^*\in A_1(x)$, $v^*=f(w^*)$ and $u^*\in N(x)$ such that $\li{w^*u^*}=\li{xv^*}=\li{xw^*}$. It holds that $x\in A(w^*)$, $u^*\in B(w^*)$, $d(w^*)\geq 2$, $N(v^*)\cap N(x)=\{u^*\}$, $N^2(w^*)\cap N(x)=\{u^*\}$ , $N^3(w^*)\cap N(x)=N(x)-u^* $, $N(u^*)\cap N(x)=\emptyset$, and $N(x)-u^*\subseteq N^2(v^*)$. Since $u^*\in C$, the set $\Li^x_2\cup \Li^x_3\cup  L_{A_1}\cup L_{A_2}\cup \{\li{xu^*}\}$ contains at least 
{\small $$
|N^3(x)|+|N^2(x)|- |A(x)|+ |A_1(x)|\cdot |N(x)|-|A_1(x)|+|A_2(x)|+1=|N^3(x)|+|N^2(x)|+ |A_1(x)|\cdot |N(x)|-2|A_1(x)|+1
$$}

lines. As $\ell(G)\leq n-1=|N(x)|+|N^2(x)|+|N^3(x)|$, we have that $(|A_1(x)| -1)\cdot (|N(x)|-2)\leq 1$, therefore $|A_1(x)|\leq 2$ or $ |N(x)|\leq 2$.

\begin{claim}
$|A_1(x)|\neq  1$.
\end{claim}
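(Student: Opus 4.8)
The plan is to argue by contradiction, assuming $|A_1(x)| = 1$, and to exploit the maximality of $x$ (recall $x$ was chosen in $R$ maximizing $|A(\cdot)|$) together with the structure already established: with $A_1(x) = \{w^*\}$, $v^* = f(w^*)$, $u^* \in N(x)$, we have $\li{w^*u^*} = \li{xv^*} = \li{xw^*}$, and moreover $x \in A(w^*)$, $u^* \in B(w^*)$, $d(w^*) \geq 2$, $N(v^*) \cap N(x) = \{u^*\}$, $N^2(w^*) \cap N(x) = \{u^*\}$, $N^3(w^*) \cap N(x) = N(x) - u^*$, $N(u^*) \cap N(x) = \emptyset$, and $N(x) - u^* \subseteq N^2(v^*)$. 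Since $x \in A(w^*)$, the vertex $w^*$ also lies in $R$, so $|A(w^*)| \leq |A(x)|$. The first step is to show $x \in A_1(w^*)$: indeed $\li{w^*x} = \li{w^*u^*}$ with $u^* \in N(w^*)$ (as $d(w^*, u^*) = 2$ — wait, $u^* \in N^2(w^*)$, so this needs care). Here the role of $u^*$ relative to $w^*$ must be examined: $f$ applied in the graph centered at $w^*$ sends $x$ to $u^* = f_{w^*}(x) \in N^2(w^*)$, and to have $x \in A_1(w^*)$ we would need a vertex of $N(w^*)$ generating the same line — this is where I would look for such a vertex or, failing that, conclude $x \in A_2(w^*)$ and push the counting argument from the other side.

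The key idea I expect to drive the contradiction is a double-counting or symmetry argument: I would apply the same line-counting bound obtained just before the claim, but now centered at $w^*$ instead of $x$. Centered at $w^*$, one gets $(|A_1(w^*)| - 1)(|N(w^*)| - 2) \leq 1$ provided $A_1(w^*) \neq \emptyset$ and some element of $C(w^*)$ is available; and the bound from $x$ gives $(|A_1(x)| - 1)(|N(x)| - 2) \leq 1$, which under $|A_1(x)| = 1$ is vacuous, so no information comes from $x$ directly — hence the need to switch centers. The plan is: (i) verify $A_1(w^*) \neq \emptyset$ (using $x$ or a neighbor of $x$ mapping appropriately), (ii) apply Claim \ref{R1emptyC}'s analogue at $w^*$ to enlarge the counted set, (iii) derive $(|A_1(w^*)| - 1)(|N(w^*)| - 2) \leq 1$, and then (iv) combine with $|A(w^*)| \leq |A(x)| = |A_1(x)| + |A_2(x)| = 1 + |A_2(x)|$ to force a very restricted local structure around $x$, small enough that I can then directly count lines in $G$ and violate $\ell(G) \leq n - 1$. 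An auxiliary observation that should help is that $N^3(w^*) \cap N(x) = N(x) - u^*$ has size $d(x) - 1 \geq 1$, and each such vertex $u'$ satisfies $d(w^*, u') = 3$, so $|N^3(w^*)| \geq d(x) - 1$; combined with $v^* \in N(w^*)$ we learn $w^*$ is not a leaf and get lower bounds on $|A(w^*)|$ or on the number of lines through $w^*$.

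The main obstacle, I expect, is handling the interplay between the neighborhoods of $x$ and $w^*$ cleanly — in particular pinning down whether $x \in A_1(w^*)$ or $x \in A_2(w^*)$, and locating the vertex of $N(w^*)$ (if any) that witnesses $x \in A_1(w^*)$; the sets $N(w^*)$, $N^2(w^*)$, $N^3(w^*)$ cut across $N(x)$, $N^2(x)$, $N^3(x)$ in a way that requires careful bookkeeping. A secondary obstacle is making sure the line sets counted at $w^*$ (the analogues of $\Li^{w^*}_2$, $\Li^{w^*}_3$, $L_{A_1}$, $L_{A_2}$, and $\{\li{w^*u^*}\}$) are genuinely disjoint and that none of them coincides with lines already counted at $x$, so that when both bounds are in play simultaneously they actually combine rather than double-count; I would use Lemmas \ref{LemmaNi}, \ref{Lemmadxdy} and \ref{LemmaIndCom} repeatedly to separate these families. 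Once the local structure is forced to be small (I anticipate $d(x) = 2$ and $|N^2(x)|, |N^3(x)|$ both bounded by a small constant), the final contradiction should be a short explicit line count showing $\ell(G) \geq n$.
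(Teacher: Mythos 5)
Your plan has a genuine gap: the mechanism you propose for the contradiction cannot work. You correctly observe that the inequality $(|A_1(x)|-1)(|N(x)|-2)\leq 1$ is vacuous when $|A_1(x)|=1$, and you propose to recover information by re-centering the count at $w^*$ and deriving $(|A_1(w^*)|-1)(|N(w^*)|-2)\leq 1$. But this is equally vacuous: since $x\in A(w^*)$, the vertex $w^*$ lies in $R$, and the maximality of $x$ gives $|A(w^*)|\leq |A(x)|$; once one shows $A_2(x)=\emptyset$ (which itself requires an argument you do not supply), one gets $|A(x)|=1$, hence $A(w^*)=\{x\}$ and $|A_1(w^*)|\leq 1$, so the product inequality at $w^*$ carries no information either. (Incidentally, $x\in A_1(w^*)$ is witnessed by $v^*\in N(w^*)$, since $\li{xv^*}=\li{xw^*}$; the vertex of $N(w^*)$ you were looking for is $v^*$, not $u^*$.) The counting from $x$'s side only yields $n-2$ lines in $\Li^x_2\cup\Li^x_3\cup L_{A_1}\cup L_{A_2}\cup\{\li{xu^*}\}$, so two additional lines must be exhibited explicitly, and your plan never produces them; nothing in your outline actually forces the ``very restricted local structure'' you invoke at the end.

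The paper's proof does exactly this missing work. It takes a neighbor $w\in N(w^*)-v^*\subseteq N^3(x)$ (which exists since $d(w^*)\geq 2$), shows $d(w,u^*)=2$, and checks that $l_1=\li{wv^*}$ is disjoint from $N(x)+x$, so $l_1$ is a new line; this pins down $\ell(G)=n-1$ and the exact list $\Li(G)$. It then uses that exact list to prove $A_2(x)=\emptyset$, invokes the maximality of $x$ to get $A(w^*)=\{x\}$ and $B(w^*)=\{u^*\}$, and finally constructs a second new line $l_2=\li{w^*v'}$, with $v'\in N(v^*)\cap N(u)\cap N^2(x)$ for some $u\in N(x)-u^*$, that lies outside $\Li(G)$ --- the contradiction. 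Your ``directly count lines and violate $\ell(G)\leq n-1$'' step is precisely where all of this lives, and without the explicit constructions of $l_1$ and $l_2$ the argument does not close.
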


\begin{proof}
For the sake of contradiction, suppose that $A_1(x)=\{w^*\}$, observing that $L_{A_1}=\{\li{w^*u}:u\in N(x)-u^*\}$. Note that $ N(w^*)-v^*\subseteq N^3(x)-w^*$, and recalling that $d(w^*)\geq 2$, let $w\in N(w^*)-v^*$. Since $N^3(u^*)\cap N(w^*)\subseteq \li{w^*u^*}\setminus \li{xw^*}=\emptyset$, we have that $d(w,u^*)=2$. Consider $v\in N(w)\cap N(u^*)$, noting that $v\in N^2(x)-v^*$. $G$ is represented in the following diagram.
\begin{center}
\scalebox{.65}{\tikzset{every picture/.style={line width=0.75pt}} 

\begin{tikzpicture}[x=0.75pt,y=0.75pt,yscale=-1,xscale=1]

\draw  [fill={rgb, 255:red, 0; green, 0; blue, 0 }  ,fill opacity=1 ] (260.17,65) .. controls (260.17,62.24) and (262.41,60) .. (265.17,60) .. controls (267.93,60) and (270.17,62.24) .. (270.17,65) .. controls (270.17,67.76) and (267.93,70) .. (265.17,70) .. controls (262.41,70) and (260.17,67.76) .. (260.17,65) -- cycle ;
\draw  [fill={rgb, 255:red, 0; green, 0; blue, 0 }  ,fill opacity=1 ] (230.17,125) .. controls (230.17,122.24) and (232.41,120) .. (235.17,120) .. controls (237.93,120) and (240.17,122.24) .. (240.17,125) .. controls (240.17,127.76) and (237.93,130) .. (235.17,130) .. controls (232.41,130) and (230.17,127.76) .. (230.17,125) -- cycle ;
\draw    (265.17,65) -- (235.17,125) ;
\draw  [fill={rgb, 255:red, 0; green, 0; blue, 0 }  ,fill opacity=1 ] (210.17,185.14) .. controls (210.17,182.38) and (212.41,180.14) .. (215.17,180.14) .. controls (217.93,180.14) and (220.17,182.38) .. (220.17,185.14) .. controls (220.17,187.9) and (217.93,190.14) .. (215.17,190.14) .. controls (212.41,190.14) and (210.17,187.9) .. (210.17,185.14) -- cycle ;
\draw  [fill={rgb, 255:red, 0; green, 0; blue, 0 }  ,fill opacity=1 ] (210.17,245) .. controls (210.17,242.24) and (212.41,240) .. (215.17,240) .. controls (217.93,240) and (220.17,242.24) .. (220.17,245) .. controls (220.17,247.76) and (217.93,250) .. (215.17,250) .. controls (212.41,250) and (210.17,247.76) .. (210.17,245) -- cycle ;
\draw    (215.17,185.14) -- (215.17,245) ;
\draw    (235.17,125) -- (215.17,185.14) ;
\draw  [fill={rgb, 255:red, 0; green, 0; blue, 0 }  ,fill opacity=1 ] (280.17,125) .. controls (280.17,122.24) and (282.41,120) .. (285.17,120) .. controls (287.93,120) and (290.17,122.24) .. (290.17,125) .. controls (290.17,127.76) and (287.93,130) .. (285.17,130) .. controls (282.41,130) and (280.17,127.76) .. (280.17,125) -- cycle ;
\draw  [fill={rgb, 255:red, 0; green, 0; blue, 0 }  ,fill opacity=1 ] (311.46,125.25) .. controls (311.46,124.59) and (310.92,124.05) .. (310.26,124.05) .. controls (309.6,124.05) and (309.06,124.59) .. (309.06,125.25) .. controls (309.06,125.91) and (309.6,126.45) .. (310.26,126.45) .. controls (310.92,126.45) and (311.46,125.91) .. (311.46,125.25) -- cycle ;
\draw  [fill={rgb, 255:red, 0; green, 0; blue, 0 }  ,fill opacity=1 ] (321.58,125.2) .. controls (321.58,124.54) and (321.05,124) .. (320.38,124) .. controls (319.72,124) and (319.18,124.54) .. (319.18,125.2) .. controls (319.18,125.86) and (319.72,126.4) .. (320.38,126.4) .. controls (321.05,126.4) and (321.58,125.86) .. (321.58,125.2) -- cycle ;
\draw  [fill={rgb, 255:red, 0; green, 0; blue, 0 }  ,fill opacity=1 ] (331.6,125.25) .. controls (331.6,124.59) and (331.06,124.05) .. (330.4,124.05) .. controls (329.74,124.05) and (329.2,124.59) .. (329.2,125.25) .. controls (329.2,125.91) and (329.74,126.45) .. (330.4,126.45) .. controls (331.06,126.45) and (331.6,125.91) .. (331.6,125.25) -- cycle ;
\draw  [dash pattern={on 4.5pt off 4.5pt}]  (285.17,125) -- (235.17,125) ;
\draw  [dash pattern={on 4.5pt off 4.5pt}]  (285.17,125) -- (215.17,185.14) ;
\draw    (265.17,65) -- (285.17,125) ;
\draw  [fill={rgb, 255:red, 0; green, 0; blue, 0 }  ,fill opacity=1 ] (302.4,185.25) .. controls (302.4,184.59) and (301.86,184.05) .. (301.2,184.05) .. controls (300.54,184.05) and (300,184.59) .. (300,185.25) .. controls (300,185.91) and (300.54,186.45) .. (301.2,186.45) .. controls (301.86,186.45) and (302.4,185.91) .. (302.4,185.25) -- cycle ;
\draw  [fill={rgb, 255:red, 0; green, 0; blue, 0 }  ,fill opacity=1 ] (312.52,185.2) .. controls (312.52,184.54) and (311.98,184) .. (311.32,184) .. controls (310.66,184) and (310.12,184.54) .. (310.12,185.2) .. controls (310.12,185.86) and (310.66,186.4) .. (311.32,186.4) .. controls (311.98,186.4) and (312.52,185.86) .. (312.52,185.2) -- cycle ;
\draw  [fill={rgb, 255:red, 0; green, 0; blue, 0 }  ,fill opacity=1 ] (322.54,185.25) .. controls (322.54,184.59) and (322,184.05) .. (321.34,184.05) .. controls (320.68,184.05) and (320.14,184.59) .. (320.14,185.25) .. controls (320.14,185.91) and (320.68,186.45) .. (321.34,186.45) .. controls (322,186.45) and (322.54,185.91) .. (322.54,185.25) -- cycle ;
\draw  [fill={rgb, 255:red, 0; green, 0; blue, 0 }  ,fill opacity=1 ] (260,185.14) .. controls (260,182.38) and (262.24,180.14) .. (265,180.14) .. controls (267.76,180.14) and (270,182.38) .. (270,185.14) .. controls (270,187.9) and (267.76,190.14) .. (265,190.14) .. controls (262.24,190.14) and (260,187.9) .. (260,185.14) -- cycle ;
\draw  [fill={rgb, 255:red, 0; green, 0; blue, 0 }  ,fill opacity=1 ] (260,245) .. controls (260,242.24) and (262.24,240) .. (265,240) .. controls (267.76,240) and (270,242.24) .. (270,245) .. controls (270,247.76) and (267.76,250) .. (265,250) .. controls (262.24,250) and (260,247.76) .. (260,245) -- cycle ;
\draw    (265,185.14) -- (265,245) ;
\draw    (215.17,245) -- (265,245) ;
\draw  [dash pattern={on 4.5pt off 4.5pt}]  (265,185.14) -- (215.17,245) ;
\draw  [dash pattern={on 4.5pt off 4.5pt}]  (215.17,185.14) -- (265,245) ;
\draw    (235.17,125) -- (265,185.14) ;
\draw  [fill={rgb, 255:red, 0; green, 0; blue, 0 }  ,fill opacity=1 ] (302.57,245.25) .. controls (302.57,244.59) and (302.03,244.05) .. (301.37,244.05) .. controls (300.71,244.05) and (300.17,244.59) .. (300.17,245.25) .. controls (300.17,245.91) and (300.71,246.45) .. (301.37,246.45) .. controls (302.03,246.45) and (302.57,245.91) .. (302.57,245.25) -- cycle ;
\draw  [fill={rgb, 255:red, 0; green, 0; blue, 0 }  ,fill opacity=1 ] (312.69,245.2) .. controls (312.69,244.54) and (312.15,244) .. (311.49,244) .. controls (310.83,244) and (310.29,244.54) .. (310.29,245.2) .. controls (310.29,245.86) and (310.83,246.4) .. (311.49,246.4) .. controls (312.15,246.4) and (312.69,245.86) .. (312.69,245.2) -- cycle ;
\draw  [fill={rgb, 255:red, 0; green, 0; blue, 0 }  ,fill opacity=1 ] (322.71,245.25) .. controls (322.71,244.59) and (322.17,244.05) .. (321.51,244.05) .. controls (320.84,244.05) and (320.31,244.59) .. (320.31,245.25) .. controls (320.31,245.91) and (320.84,246.45) .. (321.51,246.45) .. controls (322.17,246.45) and (322.71,245.91) .. (322.71,245.25) -- cycle ;

\draw (261.17,42.4) node [anchor=north west][inner sep=0.75pt]    {$x$};
\draw (129.45,118.83) node [anchor=north west][inner sep=0.75pt]    {$N( x)$};
\draw (129.45,175.63) node [anchor=north west][inner sep=0.75pt]    {$N^{2}( x)$};
\draw (129.45,235.63) node [anchor=north west][inner sep=0.75pt]    {$N^{3}( x)$};
\draw (189.26,178.29) node [anchor=north west][inner sep=0.75pt]    {$v^{*}$};
\draw (186.69,237.54) node [anchor=north west][inner sep=0.75pt]    {$w^{*}$};
\draw (208.36,117.81) node [anchor=north west][inner sep=0.75pt]    {$u^{*}$};
\draw (275.87,181.65) node [anchor=north west][inner sep=0.75pt]    {$v$};
\draw (274.05,240.4) node [anchor=north west][inner sep=0.75pt]    {$w$};

\end{tikzpicture}}
\end{center}
As $|A_1(x)|=1$, the set $\Li^x_2\cup \Li^x_3\cup  L_{A_1} \cup L_{A_2}\cup \{\li{xu^*}\}$ contains at least $n-2$ lines. Consider the line $l_1=\li{wv^*}$, observing that $x\notin l_1$ and $l_1\cap \left (N(x)-u^*\right )=\emptyset$. Given that $d(w,u^*)=d(w,v^*)=2$, the vertex $u^*$ does not belong to the line $l_1$, hence $l_1\cap \left ( N(x)+x\right) =\emptyset$. It follows that $l_1\notin \Li^x_2\cup \Li^x_3\cup L_{A_1}\cup L_{A_2}\cup \{\li{xu^*}\}$, therefore $\ell(G)=n-1$ and $\Li(G)=\Li^x_2\cup \Li^x_3\cup L_{A_1}\cup L_{A_2}\cup \{\li{xu^*},l_1\}$. We will now show that $A_2(x)$ is empty by contradiction. Suppose $A_2(x)$ is not empty and consider $w'\in A_2(x)$. To count the $n-1$ lines, we previously used the lower bound $|N^2(w')\cap N(x)|\geq 1$, thus $|N^2(w')\cap N(x)|=1$. As $d(x)\geq 2$, there exists $u'\in N^3(w')\cap N(x)$. Consider the line $l_2=\li{w'u'}$. Recall that $l_1\cap N(x)=\emptyset$, therefore $l_2\neq l_1$. Given that the line $l_2$ does not contain $x$, it must hold that $l_2\in L_{A_1}$, thus $w^*\in l_2$. It follows that $d(w^*,u')=2$, therefore $u'\in N^2(w^*)\cap N(x)=\{u^*\}$ and $u'=u^*$. This is a contradiction, since as $N(x)-u^*+w^*\subseteq N^2(u^*)$, no line in $L_{A_1}$ contains $u^*$. It follows that $A_2(x)=\emptyset$, hence, $|A(x)|=|A_1(x)|=1$. As $w^*\in R$ and $x\in A(w^*)$, by the choice of $x$, it holds that $|A(w^*)|=1$. Consequently, $A(w^*)=\{x\}$ and $B(w^*)=\{u^*\}$. Let $u\in N(x)-u^*$. Given that $d(v^*,u)=2$, consider $v'\in N(v^*)\cap N(u)$. Since $N(v^*)\cap N(x)=\{u^*\}$ and $N(u^*)\cap N(x)=\emptyset$, the vertex $v'$ must belong to $N^2(x)$. Now consider the line $l_2=\li{w^*v'}\in \Li^{w^*}_2$, observing that since $B(w^*)=\{u^*\}$, the line $l_2$ does not belong to $L_{A_1}\subseteq \Li^{w^*}_3$. As $u\in N^3(w^*)\cap N(v')$, we have that $u\in l_2$, thus $l_2\neq l_1$. The line $l_2$ does not contain the vertex $x$ implying that $l_2\notin \Li(G)$, yielding a contradiction.
\end{proof}

Consider the set of lines $L^*=\{\li{w^*v}:v\in B_1(x)-v^*\}$. The lines in $L^*$ do not contain  $x$, therefore $L^*$ is disjoint from $\Li^x_2,  \Li^x_3, L_{A_2}$ and $\{\li{xu^*}\}$. 

\begin{claim}
$L^*$ is disjoint from $L_{A_1}$. 
\end{claim}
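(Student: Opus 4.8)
The plan is to argue by contradiction: suppose a line $l$ lies in $L^*\cap L_{A_1}$. Then $l=\li{w^*v}$ for some $v\in B_1(x)-v^*$ and also $l=\li{wu'}$ for some $w\in A_1(x)$ and $u'\in N^3(w)\cap N(x)$, so $d(w,u')=3$. Since $B_1(x)-v^*=f(A_1(x)-w^*)$, we have $v=f(w')$ for a unique $w'\in A_1(x)-w^*$; let $u_{w'}$ (respectively $u_w$) denote the vertex of $N(x)$ attached through $f$ to $w'$ (respectively to $w$), so that $\li{w'u_{w'}}=\li{xf(w')}=\li{xw'}$ and similarly for $w$. Every structural fact already proved for $(w^*,v^*,u^*)$ was obtained for a generic member of $A_1(x)$, hence it holds verbatim for $(w,f(w),u_w)$ and $(w',v,u_{w'})$; I will use these freely, together with $|A_1(x)|\geq 2$ and the inequality $(|A_1(x)|-1)(|N(x)|-2)\leq 1$ obtained above.

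The first step is to locate $w^*$ and $v$ inside the line $\li{wu'}$. By Lemma~\ref{LemmaLineDiam}, $\li{wu'}=\{w,u'\}\cup\{z:[wzu']\}$ and every internal vertex $z$ satisfies $d(w,z)+d(z,u')=3$. Using that, for $u''\in N(x)$, one has $d(w^*,u'')=2$ exactly when $u''=u^*$, betweenness forces either $w=w^*$, or $u'=u^*$ and $ww^*\in E$. Similarly, placing $v=f(w')$ in $\li{wu'}$ and using Claim~\ref{ClaimRnonemptyfunc} (the only edges from $A(x)$ into $N^2(x)$ are the $wf(w)$) together with $N(f(w'))\cap N(x)=\{u_{w'}\}$, betweenness forces either $w=w'$ with $u'\neq u_{w'}$, or $u'=u_{w'}$. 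As $w=w^*=w'$ is impossible, exactly three configurations remain: (a) $w=w^*$ and $u'=u_{w'}$; (b) $w=w'$, $u'=u^*$ and $ww^*\in E$; (c) $u'=u^*=u_{w'}$, $ww^*\in E$ and $w\notin\{w^*,w'\}$.

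The second step, common to all three configurations, is to show $v^*\in l$: knowing $d(w,u')=3$ and the relevant value of $d(w^*,u')$, and using $v^*u^*\in E$ together with $N(x)-u^*\subseteq N^2(v^*)$, one checks that $v^*$ lies strictly between the appropriate pair of endpoints in one of the two descriptions of $l$. Hence $l=\li{w^*v}$ contains the two distinct $N^2(x)$-vertices $v,v^*$, the $N(x)$-vertex $u'$, and the $N^3(x)$-vertices $w,w^*$. In configurations (a) and (b) one further checks $d(w^*,v)=2$ (since $v\neq f(w^*)$ and $v$ lies on a length-$3$ geodesic in the relevant description of $l$), and one derives a contradiction from the resulting description of $l$ as a line through $w^*$ meeting $N^1(w^*)$, $N^2(w^*)$, $N^3(w^*)$ in prescribed vertices, using Lemma~\ref{LemmaNi} and the fact that $u_{w'}$ already lies on $\li{xw'}=\li{xf(w')}$. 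In configuration (c), $w\notin\{w^*,w'\}$ forces $|A_1(x)|\geq 3$, hence $|N(x)|=2$ and $N(x)=\{u^*,u'\}$; one also locates $f(w)$ on $l$ (using $N(x)-u_w\subseteq N^2(f(w))$), so that $l$ must contain three pairwise distinct $N^2(x)$-vertices, two $N^3(x)$-vertices and one $N(x)$-vertex, and in this degenerate regime $G$ is nearly determined; the contradiction then comes either from an impossible betweenness among the vertices crammed into $l$ (an application of Lemma~\ref{Lemmadxdy} to one of the descriptions of $l$) or from exhibiting one more forced line outside $\Li^x_2\cup\Li^x_3\cup L_{A_1}\cup L_{A_2}\cup\{\li{xu^*}\}\cup L^*$.

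The main obstacle is precisely this closing step. Steps one and two are routine betweenness bookkeeping, but no single lemma application dispatches all of (a), (b), (c) at once: (a) and (b) hinge on a line repetition around $w^*$ detected by Lemma~\ref{LemmaNi}, while (c) forces the degenerate regime $|N(x)|=2$ in which $G$ must be pinned down further before a contradiction surfaces. I expect the most economical write-up keeps the three-way split but, in each case, isolates a single vertex of $l$ that either is equidistant in $\{2,3\}$ from the endpoints of one description of $l$ (contradicting Lemma~\ref{Lemmadxdy}) or duplicates an already-counted line; identifying the right vertex in configuration (c) is the delicate point.
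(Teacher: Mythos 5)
Your setup is sound: the three-way decomposition obtained by locating $w^*$ and $v=f(w')$ inside $\li{wu'}$ is correct, as is the observation that $v^*\in l$ and $d(w^*,v)=2$ in every configuration. Configurations (b) and (c) do fall to the kind of local betweenness argument you gesture at: in (c) one has $u'=u^*$ with $d(w^*,u^*)=2$, $d(v,u^*)=1$ and $d(w^*,v)=2$, so $u^*\notin\li{w^*v}$ by a direct check although $u^*\in\li{wu'}$; in (b) one has $d(v,u^*)=2=d(w^*,u^*)$, so Lemma \ref{Lemmadxdy} excludes $u^*$ from $\li{w^*v}$. (The paper kills both at once by showing that the unique $N(x)$-vertex on $\li{w^*v}$ must lie in $N(v)\cap N^3(w^*)$, forcing $u'=u_{w'}\neq u^*$ and $w\neq w'$.) So, contrary to your assessment, (c) is not the delicate point.

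The genuine gap is configuration (a), which is where the paper actually has to work. There $w=w^*$ and $u'=u_{w'}$, so $l=\li{w^*v}=\li{w^*u_{w'}}$ with $v\in N^2(w^*)$ and $u_{w'}\in N^3(w^*)$. The ``description of $l$ as a line through $w^*$'' that you propose to exploit is internally consistent: Lemma \ref{LemmaNi} only yields $l\cap N^2(w^*)=\{v\}$ and $l\cap N^3(w^*)=\{u_{w'}\}$, every vertex identified so far ($w^*,v^*,v,u_{w'}$) sits happily in both descriptions of $l$, and the fact that $u_{w'}\in\li{xw'}$ produces no clash either, since two distinct lines may share the two points $v$ and $u_{w'}$. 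No contradiction comes out of the vertices already on the table. The paper closes this case by introducing a new vertex: it first shows $w^*w'\notin E$ (via $N^3(u^*)\cap N(w^*)\subseteq\li{w^*u^*}\setminus\li{xw^*}=\emptyset$ together with $d(w',u^*)=3$), then $d(w^*,w')=2$ (by comparing the two descriptions of $l$ on $w'$), picks $w''\in N(w^*)\cap N(w')$, and checks $d(w'',v)=2$, hence $w''\notin l$, hence $d(w'',u_{w'})=3$, hence $w''\in\li{w'u_{w'}}=\li{xw'}$, contradicting $\li{xw'}\cap N^3(x)=\{w'\}$. This auxiliary-vertex construction is the missing idea; without it, or something of equal substance, configuration (a) is not refuted and the claim is not proved.
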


\begin{proof}
Let $l=\li{w^*\hat{v}}\in L^*$, $\hat{w}=f^{-1}(\hat{v})$ and $\hat{u}\in N(x)$ such that $\li{\hat{w}\hat{u}}=\li{x\hat{v}}=\li{x\hat{w}}$. Recall that $N(\hat{v})\cap N(x)=\{\hat{u}\}$, $N^2(\hat{w})\cap N(x)=\{\hat{u}\}$ , $N^3(\hat{w})\cap N(x)=N(x)-\hat{u} $, $N(\hat{u})\cap N(x)=\emptyset$, and $N(x)-\hat{u}\subseteq N^2(\hat{v})$. We will show that $l\notin L_{A_1}$. By contradiction, assume that $l=\li{wu}\in L_{A_1}$. Since $N^3(w^*)\cap N(u^*)\cap N^2(x)\subseteq \li{w^*u^*}\setminus \li{xv^*}=\emptyset$ and $N(x)-u^*\subseteq N^3(w^*)$, if $d(w^*,\hat{v})=3$, we obtain that $\hat{v}u^*\notin E$ and $l\cap N(x)=\emptyset$. This is a contradiction to the fact that $u\in l$, hence it follows that $d(w^*,\hat{v})=2$.
The line $l=\li{w^*\hat{v}}$ contains the vertex $u$, thus
$u\in N(\hat{v})\cap N^3(w^*)$. Since $N(\hat{v})\cap N(x)=\{\hat{u}\}$ and $d(w^*,u^*)=d(\hat{w},\hat{u})=2$, we obtain that $\hat{u}=u\neq u^*$ and $w\neq \hat{w}$. In particular, we have that $d(w^*,\hat{u})=d(\hat{w},u^*)=3$. $G$ is illustrated in the following diagram.
\begin{center}
\scalebox{.65}{\tikzset{every picture/.style={line width=0.75pt}} 

\begin{tikzpicture}[x=0.75pt,y=0.75pt,yscale=-1,xscale=1]

\draw  [fill={rgb, 255:red, 0; green, 0; blue, 0 }  ,fill opacity=1 ] (265.19,54.89) .. controls (265.19,52.13) and (267.43,49.89) .. (270.19,49.89) .. controls (272.95,49.89) and (275.19,52.13) .. (275.19,54.89) .. controls (275.19,57.66) and (272.95,59.89) .. (270.19,59.89) .. controls (267.43,59.89) and (265.19,57.66) .. (265.19,54.89) -- cycle ;
\draw  [fill={rgb, 255:red, 0; green, 0; blue, 0 }  ,fill opacity=1 ] (240,115) .. controls (240,112.24) and (242.24,110) .. (245,110) .. controls (247.76,110) and (250,112.24) .. (250,115) .. controls (250,117.76) and (247.76,120) .. (245,120) .. controls (242.24,120) and (240,117.76) .. (240,115) -- cycle ;
\draw    (270.19,54.89) -- (245,115) ;
\draw  [fill={rgb, 255:red, 0; green, 0; blue, 0 }  ,fill opacity=1 ] (240.14,175.14) .. controls (240.14,172.38) and (242.38,170.14) .. (245.14,170.14) .. controls (247.9,170.14) and (250.14,172.38) .. (250.14,175.14) .. controls (250.14,177.9) and (247.9,180.14) .. (245.14,180.14) .. controls (242.38,180.14) and (240.14,177.9) .. (240.14,175.14) -- cycle ;
\draw  [fill={rgb, 255:red, 0; green, 0; blue, 0 }  ,fill opacity=1 ] (240.14,235) .. controls (240.14,232.24) and (242.38,230) .. (245.14,230) .. controls (247.9,230) and (250.14,232.24) .. (250.14,235) .. controls (250.14,237.76) and (247.9,240) .. (245.14,240) .. controls (242.38,240) and (240.14,237.76) .. (240.14,235) -- cycle ;
\draw    (245.14,175.14) -- (245.14,235) ;
\draw  [fill={rgb, 255:red, 0; green, 0; blue, 0 }  ,fill opacity=1 ] (290,115) .. controls (290,112.24) and (292.24,110) .. (295,110) .. controls (297.76,110) and (300,112.24) .. (300,115) .. controls (300,117.76) and (297.76,120) .. (295,120) .. controls (292.24,120) and (290,117.76) .. (290,115) -- cycle ;
\draw    (270.19,54.89) -- (295,115) ;
\draw  [fill={rgb, 255:red, 0; green, 0; blue, 0 }  ,fill opacity=1 ] (289.97,175.14) .. controls (289.97,172.38) and (292.21,170.14) .. (294.97,170.14) .. controls (297.73,170.14) and (299.97,172.38) .. (299.97,175.14) .. controls (299.97,177.9) and (297.73,180.14) .. (294.97,180.14) .. controls (292.21,180.14) and (289.97,177.9) .. (289.97,175.14) -- cycle ;
\draw  [fill={rgb, 255:red, 0; green, 0; blue, 0 }  ,fill opacity=1 ] (289.97,235) .. controls (289.97,232.24) and (292.21,230) .. (294.97,230) .. controls (297.73,230) and (299.97,232.24) .. (299.97,235) .. controls (299.97,237.76) and (297.73,240) .. (294.97,240) .. controls (292.21,240) and (289.97,237.76) .. (289.97,235) -- cycle ;
\draw    (294.97,175.14) -- (294.97,235) ;
\draw    (245.14,175.14) -- (260.42,175.14) -- (294.97,175.14) ;
\draw  [dash pattern={on 4.5pt off 4.5pt}]  (294.97,175.14) -- (245.14,235) ;
\draw  [dash pattern={on 4.5pt off 4.5pt}]  (245.14,175.14) -- (294.97,235) ;
\draw    (245,115) -- (245.14,175.14) ;
\draw    (295,115) -- (294.97,175.14) ;
\draw  [dash pattern={on 4.5pt off 4.5pt}]  (245.14,175.14) -- (295,115) ;
\draw  [dash pattern={on 4.5pt off 4.5pt}]  (295.06,175.14) -- (245.08,115) ;
\draw  [fill={rgb, 255:red, 0; green, 0; blue, 0 }  ,fill opacity=1 ] (342.4,175.25) .. controls (342.4,174.59) and (341.86,174.05) .. (341.2,174.05) .. controls (340.54,174.05) and (340,174.59) .. (340,175.25) .. controls (340,175.91) and (340.54,176.45) .. (341.2,176.45) .. controls (341.86,176.45) and (342.4,175.91) .. (342.4,175.25) -- cycle ;
\draw  [fill={rgb, 255:red, 0; green, 0; blue, 0 }  ,fill opacity=1 ] (352.52,175.2) .. controls (352.52,174.54) and (351.98,174) .. (351.32,174) .. controls (350.66,174) and (350.12,174.54) .. (350.12,175.2) .. controls (350.12,175.86) and (350.66,176.4) .. (351.32,176.4) .. controls (351.98,176.4) and (352.52,175.86) .. (352.52,175.2) -- cycle ;
\draw  [fill={rgb, 255:red, 0; green, 0; blue, 0 }  ,fill opacity=1 ] (362.54,175.25) .. controls (362.54,174.59) and (362,174.05) .. (361.34,174.05) .. controls (360.68,174.05) and (360.14,174.59) .. (360.14,175.25) .. controls (360.14,175.91) and (360.68,176.45) .. (361.34,176.45) .. controls (362,176.45) and (362.54,175.91) .. (362.54,175.25) -- cycle ;
\draw  [fill={rgb, 255:red, 0; green, 0; blue, 0 }  ,fill opacity=1 ] (342.4,235.25) .. controls (342.4,234.59) and (341.86,234.05) .. (341.2,234.05) .. controls (340.54,234.05) and (340,234.59) .. (340,235.25) .. controls (340,235.91) and (340.54,236.45) .. (341.2,236.45) .. controls (341.86,236.45) and (342.4,235.91) .. (342.4,235.25) -- cycle ;
\draw  [fill={rgb, 255:red, 0; green, 0; blue, 0 }  ,fill opacity=1 ] (352.52,235.2) .. controls (352.52,234.54) and (351.98,234) .. (351.32,234) .. controls (350.66,234) and (350.12,234.54) .. (350.12,235.2) .. controls (350.12,235.86) and (350.66,236.4) .. (351.32,236.4) .. controls (351.98,236.4) and (352.52,235.86) .. (352.52,235.2) -- cycle ;
\draw  [fill={rgb, 255:red, 0; green, 0; blue, 0 }  ,fill opacity=1 ] (362.54,235.25) .. controls (362.54,234.59) and (362,234.05) .. (361.34,234.05) .. controls (360.68,234.05) and (360.14,234.59) .. (360.14,235.25) .. controls (360.14,235.91) and (360.68,236.45) .. (361.34,236.45) .. controls (362,236.45) and (362.54,235.91) .. (362.54,235.25) -- cycle ;
\draw  [fill={rgb, 255:red, 0; green, 0; blue, 0 }  ,fill opacity=1 ] (372.4,115.25) .. controls (372.4,114.59) and (371.86,114.05) .. (371.2,114.05) .. controls (370.54,114.05) and (370,114.59) .. (370,115.25) .. controls (370,115.91) and (370.54,116.45) .. (371.2,116.45) .. controls (371.86,116.45) and (372.4,115.91) .. (372.4,115.25) -- cycle ;
\draw  [fill={rgb, 255:red, 0; green, 0; blue, 0 }  ,fill opacity=1 ] (382.52,115.2) .. controls (382.52,114.54) and (381.98,114) .. (381.32,114) .. controls (380.66,114) and (380.12,114.54) .. (380.12,115.2) .. controls (380.12,115.86) and (380.66,116.4) .. (381.32,116.4) .. controls (381.98,116.4) and (382.52,115.86) .. (382.52,115.2) -- cycle ;
\draw  [fill={rgb, 255:red, 0; green, 0; blue, 0 }  ,fill opacity=1 ] (392.54,115.25) .. controls (392.54,114.59) and (392,114.05) .. (391.34,114.05) .. controls (390.68,114.05) and (390.14,114.59) .. (390.14,115.25) .. controls (390.14,115.91) and (390.68,116.45) .. (391.34,116.45) .. controls (392,116.45) and (392.54,115.91) .. (392.54,115.25) -- cycle ;
\draw  [dash pattern={on 4.5pt off 4.5pt}]  (295,115) -- (245.08,115) ;
\draw  [dash pattern={on 4.5pt off 4.5pt}]  (245.14,235) -- (294.97,235) ;

\draw (266.19,32.29) node [anchor=north west][inner sep=0.75pt]    {$x$};
\draw (159.43,108.83) node [anchor=north west][inner sep=0.75pt]    {$N( x)$};
\draw (159.43,165.63) node [anchor=north west][inner sep=0.75pt]    {$N^{2}( x)$};
\draw (159.43,225.63) node [anchor=north west][inner sep=0.75pt]    {$N^{3}( x)$};
\draw (219.23,168.29) node [anchor=north west][inner sep=0.75pt]    {$v^{*}$};
\draw (216.66,227.54) node [anchor=north west][inner sep=0.75pt]    {$w^{*}$};
\draw (218.19,107.81) node [anchor=north west][inner sep=0.75pt]    {$u^{*}$};
\draw (306.07,168.21) node [anchor=north west][inner sep=0.75pt]    {$\hat{v}$};
\draw (304.62,227.44) node [anchor=north west][inner sep=0.75pt]    {$\hat{w}$};
\draw (305.91,107.69) node [anchor=north west][inner sep=0.75pt]    {$\hat{u} =u$};

\end{tikzpicture}}
\end{center}
As $N^3(u^*)\cap N(w^*)\subseteq \li{w^*u^*}\setminus \li{xw^*}=\emptyset$, it holds that $w^*\hat{w}\notin E$ and then necessarily $v^*\hat{v}\in E$. It follows that $v^*\in \li{w^*\hat{v}}= l=\li{w\hat{u}}$, hence $d(w,v^*)=1$ and $w=w^*$. Now we have that $\li{w^*\hat{v}}=\li{w^*\hat{u}}$, thus $d(w^*,\hat{w})=2$ since otherwise $d(w^*,\hat{w})=3$ and $\hat{w}\in \li{w^*\hat{v}}\setminus\li{w^*\hat{u}}=\emptyset$. Consider $w'\in N(w^*)\cap N(\hat{w})$. Since $d(w',\hat{v})=2$, we obtain that $w'\notin \li{w^*\hat{v}}=\li{w^*\hat{u}}$, therefore $d(w',\hat{u})=3$ and $w'\in \li{\hat{w}\hat{u}}=\li{x\hat{w}}$, leading to a contradiction.
\end{proof}

The set $\Li^x_2\cup \Li^x_3\cup L_{A_1}\cup L_{A_2}\cup L^*\cup \{\li{xu^*}\}$ contains at least 
$$
|N^3(x)|+|N^2(x)|+ |A_1(x)|\cdot |N(x)|-2|A_1(x)|+|L^*|+1
$$

lines. We have that $|B_1(x)|=|A_1(x)|\geq 2$, thus $|L^*|\geq 1$. We observe that for $v\in B_1(x)\cap N(u^*)-v^*$, since $v\notin \li{xv^*}=\li{w^*u^*}$, we obtain that $d(w^*,v)=2$ and therefore $|L^*|\geq |B_1(x)\cap N(u^*)|-1$. 

\begin{claim}
$|A_1(x)|=2$.
\end{claim}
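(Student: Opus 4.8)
The plan is to rule out $|A_1(x)|\geq 3$ by contradiction (we already know $|A_1(x)|\geq 2$ from the assumption $A_1(x)\neq\emptyset$ together with the earlier claim $|A_1(x)|\neq 1$). First I would turn the line count just obtained into a structural constraint: since $\Li^x_2\cup\Li^x_3\cup L_{A_1}\cup L_{A_2}\cup L^*\cup\{\li{xu^*}\}$ has at least $|N^3(x)|+|N^2(x)|+|A_1(x)|\cdot|N(x)|-2|A_1(x)|+|L^*|+1$ lines while $\ell(G)\leq n-1=|N(x)|+|N^2(x)|+|N^3(x)|$, we get $|A_1(x)|\bigl(|N(x)|-2\bigr)+|L^*|\leq|N(x)|-1$. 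As $|L^*|\geq 1$ and $|A_1(x)|\geq 2$, this is impossible when $|N(x)|\geq 3$, so $|N(x)|=2$ (here $d(x)\geq 2$ since $R_1=\emptyset$); substituting back gives $|L^*|=1$, and from $|L^*|\geq|B_1(x)\cap N(u^*)|-1$ also $|B_1(x)\cap N(u^*)|\leq 2$. The crucial point is that all these conclusions are symmetric under replacing $w^*$ by an arbitrary $\hat w\in A_1(x)$: if $\hat w$ has witness $\hat u$ and $\hat v=f(\hat w)$, then $\hat u\in C$, so by Claim~\ref{R1emptyC} and the $\hat w$-versions of the preceding claims one may add $\li{x\hat u}$ and rerun the count to obtain $|\hat L^*|=1$ and $|B_1(x)\cap N(\hat u)|\leq 2$, where $\hat L^*=\{\li{\hat w v}:v\in B_1(x)-\hat v\}$. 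I will use this freely below.

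Write $N(x)=\{u^*,u\}$. Each $v\in B_1(x)$ is adjacent, among $u^*$ and $u$, to exactly the witness of $f^{-1}(v)$, so $B_1(x)=\bigl(B_1(x)\cap N(u^*)\bigr)\sqcup\bigl(B_1(x)\cap N(u)\bigr)$; assuming $|A_1(x)|\geq 3$, both parts are nonempty and, by the symmetric bound, each has size at most $2$, so $|A_1(x)|\in\{3,4\}$. After relabeling $u^*$ and $u$ we may assume $B_1(x)\cap N(u^*)=\{v^*,v_1\}$, where $w^*$ has witness $u^*$ and $w_1:=f^{-1}(v_1)$. For $v\in B_1(x)\cap N(u^*)-v^*$ we know $d(w^*,v)=2$; in particular $\li{w^*v_1}\in L^*$. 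For $v\in B_1(x)\cap N(u)\subseteq B_1(x)-v^*$ we have $d(w^*,v)\in\{2,3\}$ since $v^*$ is the only neighbour of $w^*$ in $N^2(x)$. If some such $v$ had $d(w^*,v)=2$, then $v$ and $v_1$ would be distinct vertices of $N^2(w^*)$, whence $\li{w^*v}\neq\li{w^*v_1}$ by Lemma~\ref{LemmaNi} and $|L^*|\geq 2$, a contradiction; and if $|B_1(x)\cap N(u)|=2$, its two elements would be distinct vertices of $N^3(w^*)$, again forcing $|L^*|\geq 2$. Hence $B_1(x)\cap N(u)=\{v_2\}$ for a single vertex $v_2$ with $d(w^*,v_2)=3$, and $|A_1(x)|=3$.

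Now I bootstrap adjacencies from $|L^*|=|\hat L^*|=1$. Since $d(w^*,v_2)=3>2=d(w^*,v_1)$, the forced equality $\li{w^*v_2}=\li{w^*v_1}$ implies $[w^*v_1v_2]$ (via Lemma~\ref{LemmaLineDiam}), hence $v_1\sim v_2$; running the same argument with $w_1$ in place of $w^*$ (note $d(w_1,v^*)=2$) gives $v^*\sim v_2$. Set $w_2:=f^{-1}(v_2)$, which has witness $u$; then $w_2\sim v_2$, and since $v_2$ is adjacent to both $v^*$ and $v_1$ while $w_2$ has no neighbour in $N^2(x)$ other than $v_2$, it follows that $d(w_2,v^*)=d(w_2,v_1)=2$. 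Thus $v^*$ and $v_1$ are distinct vertices of $N^2(w_2)$, so by Lemma~\ref{LemmaNi} the two lines $\li{w_2v^*},\li{w_2v_1}$ of $\hat L^*$ (taken with $\hat w=w_2$) are distinct, giving $|\hat L^*|\geq 2$; this contradicts the symmetric counting bound $|\hat L^*|\leq 1$. Therefore $|A_1(x)|\leq 2$, and with $|A_1(x)|\neq 1$ we conclude $|A_1(x)|=2$.

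The main obstacle is that once $|N(x)|=2$ the counting inequality is no longer wasteful enough to kill $|A_1(x)|\geq 3$ directly — it only pins down $|L^*|=1$ — so the argument has to be finished by a delicate distance analysis of $B_1(x)$ viewed from several designated vertices of $A_1(x)$, combining the $N^2$- and $N^3$-versions of Lemma~\ref{LemmaNi} and re-running the count from a second base vertex. The $\{2,2\}$-split falls out immediately from the $N^3$-version of Lemma~\ref{LemmaNi}; the $\{2,1\}$-split is the genuinely tight case and needs the adjacency bootstrapping $v_1\sim v_2$ and $v^*\sim v_2$.
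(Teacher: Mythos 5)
Your proof is correct, and while its first half coincides with the paper's, the contradiction at the end is reached by a genuinely different mechanism. Both arguments extract $|N(x)|=2$, $|L^*|=1$ and $|B_1(x)\cap N(u^*)|\leq 2$ from the counting inequality, and both rely on the freedom to re-choose the distinguished vertex inside $A_1(x)$ (the paper's ``we may assume that $|B_1(x)\cap N(u^*)|=2$'' is exactly your relabelling step; this is legitimate because every preceding claim was proved for an arbitrary element of $A_1(x)$ with its associated witness). From there the paper finishes quickly: it uses that $\ell(G)=n-1$ forces $\Li(G)$ to equal the displayed union \emph{exactly}, picks a third vertex $w'\in A_1(x)\setminus\{w^*,f^{-1}(v)\}$, and observes that the line $\li{w^*w'}$, containing neither $x$ nor avoiding $N(x)\cup\{v\}$, would have to lie in $L_{A_1}\cup L^*$; this forces $w^*w'\in E$ and then the contradiction $w'\in \li{w^*u^*}\setminus\li{xw^*}=\emptyset$. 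You never invoke the exact identity $\Li(G)=\Li^x_2\cup\cdots$; instead you use only the inequality $|\hat L^*|\leq 1$ re-run from the base vertices $w^*$, $w_1$ and $w_2$, apply Lemma \ref{LemmaNi} at each of them to pin down the distances from $A_1(x)$ to $B_1(x)$ (which also rules out the $(2,2)$-split and hence $|A_1(x)|=4$), bootstrap the adjacencies $v_1v_2,v^*v_2\in E$ via Lemma \ref{LemmaLineDiam}, and land on $|\hat L^*|\geq 2$ for $\hat w=w_2$. Your route is longer, but logically lighter in that it needs only counting inequalities rather than the full determination of the line set, at the price of a more delicate distance analysis; the paper's argument buys brevity by cashing in the equality $\ell(G)=n-1$ immediately.
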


\begin{proof}
By contradiction, assume that $|A_1(x)|\ge 3$. As $|A_1(x)|\leq 2$ or $ |N(x)|\leq 2$, it must hold that $|N(x)|=2$. Let $N(x)=\{u,u^*\}$. We have that $|L^*|=1$, $|B_1(x)\cap N(u^*)|\leq 2$, $\ell(G)=n-1$ and $\Li(G)=\Li^x_2\cup \Li^x_3\cup L_{A_1}\cup L_{A_2}\cup L^*\cup \{\li{xu^*}\}$. Since $|A_1(x)|\geq 3$, we may assume that $|B_1(x)\cap N(u^*)|=2$. Let $B_1(x)\cap N(u^*)=\{v^*,v\}$, hence $L^*=\{\li{w^*v}\}$. Let $w'\in A_1(x)\setminus \{w^*,f^{-1}(v)\}$ and $v'=f(w')$, noting that $v'u^*\notin E$, $v'u\in E$ and $d(w',u^*)=3$. $G$ is depicted in the following diagram.
\begin{center}
\scalebox{.65}{\tikzset{every picture/.style={line width=0.75pt}} 

\begin{tikzpicture}[x=0.75pt,y=0.75pt,yscale=-1,xscale=1]

\draw  [fill={rgb, 255:red, 0; green, 0; blue, 0 }  ,fill opacity=1 ] (285.12,65.51) .. controls (285.12,62.75) and (287.36,60.51) .. (290.12,60.51) .. controls (292.89,60.51) and (295.12,62.75) .. (295.12,65.51) .. controls (295.12,68.27) and (292.89,70.51) .. (290.12,70.51) .. controls (287.36,70.51) and (285.12,68.27) .. (285.12,65.51) -- cycle ;
\draw  [fill={rgb, 255:red, 0; green, 0; blue, 0 }  ,fill opacity=1 ] (260.12,125.11) .. controls (260.12,122.35) and (262.36,120.11) .. (265.12,120.11) .. controls (267.89,120.11) and (270.12,122.35) .. (270.12,125.11) .. controls (270.12,127.87) and (267.89,130.11) .. (265.12,130.11) .. controls (262.36,130.11) and (260.12,127.87) .. (260.12,125.11) -- cycle ;
\draw    (290.12,65.51) -- (265.12,125.11) ;
\draw  [fill={rgb, 255:red, 0; green, 0; blue, 0 }  ,fill opacity=1 ] (240.12,185.25) .. controls (240.12,182.49) and (242.36,180.25) .. (245.12,180.25) .. controls (247.89,180.25) and (250.12,182.49) .. (250.12,185.25) .. controls (250.12,188.02) and (247.89,190.25) .. (245.12,190.25) .. controls (242.36,190.25) and (240.12,188.02) .. (240.12,185.25) -- cycle ;
\draw  [fill={rgb, 255:red, 0; green, 0; blue, 0 }  ,fill opacity=1 ] (240.12,245.11) .. controls (240.12,242.35) and (242.36,240.11) .. (245.12,240.11) .. controls (247.89,240.11) and (250.12,242.35) .. (250.12,245.11) .. controls (250.12,247.87) and (247.89,250.11) .. (245.12,250.11) .. controls (242.36,250.11) and (240.12,247.87) .. (240.12,245.11) -- cycle ;
\draw    (245.12,185.25) -- (245.12,245.11) ;
\draw    (265.12,125.11) -- (245.12,185.25) ;
\draw  [fill={rgb, 255:red, 0; green, 0; blue, 0 }  ,fill opacity=1 ] (310.12,125.11) .. controls (310.12,122.35) and (312.36,120.11) .. (315.12,120.11) .. controls (317.89,120.11) and (320.12,122.35) .. (320.12,125.11) .. controls (320.12,127.87) and (317.89,130.11) .. (315.12,130.11) .. controls (312.36,130.11) and (310.12,127.87) .. (310.12,125.11) -- cycle ;
\draw    (290.12,65.51) -- (315.12,125.11) ;
\draw  [fill={rgb, 255:red, 0; green, 0; blue, 0 }  ,fill opacity=1 ] (280,185) .. controls (280,182.24) and (282.24,180) .. (285,180) .. controls (287.76,180) and (290,182.24) .. (290,185) .. controls (290,187.76) and (287.76,190) .. (285,190) .. controls (282.24,190) and (280,187.76) .. (280,185) -- cycle ;
\draw  [fill={rgb, 255:red, 0; green, 0; blue, 0 }  ,fill opacity=1 ] (280,245) .. controls (280,242.24) and (282.24,240) .. (285,240) .. controls (287.76,240) and (290,242.24) .. (290,245) .. controls (290,247.76) and (287.76,250) .. (285,250) .. controls (282.24,250) and (280,247.76) .. (280,245) -- cycle ;
\draw    (285,185) -- (285,244.86) ;
\draw  [dash pattern={on 4.5pt off 4.5pt}]  (285,185) -- (245.12,245.11) ;
\draw  [fill={rgb, 255:red, 0; green, 0; blue, 0 }  ,fill opacity=1 ] (320,245) .. controls (320,242.24) and (322.24,240) .. (325,240) .. controls (327.76,240) and (330,242.24) .. (330,245) .. controls (330,247.76) and (327.76,250) .. (325,250) .. controls (322.24,250) and (320,247.76) .. (320,245) -- cycle ;
\draw  [fill={rgb, 255:red, 0; green, 0; blue, 0 }  ,fill opacity=1 ] (320,185) .. controls (320,182.24) and (322.24,180) .. (325,180) .. controls (327.76,180) and (330,182.24) .. (330,185) .. controls (330,187.76) and (327.76,190) .. (325,190) .. controls (322.24,190) and (320,187.76) .. (320,185) -- cycle ;
\draw    (325,185) -- (325,245) ;
\draw    (265.12,125.11) -- (285,185) ;
\draw    (315.12,125.11) -- (325,185) ;
\draw  [fill={rgb, 255:red, 0; green, 0; blue, 0 }  ,fill opacity=1 ] (364.81,185.36) .. controls (364.81,184.7) and (364.27,184.16) .. (363.61,184.16) .. controls (362.95,184.16) and (362.41,184.7) .. (362.41,185.36) .. controls (362.41,186.02) and (362.95,186.56) .. (363.61,186.56) .. controls (364.27,186.56) and (364.81,186.02) .. (364.81,185.36) -- cycle ;
\draw  [fill={rgb, 255:red, 0; green, 0; blue, 0 }  ,fill opacity=1 ] (374.93,185.31) .. controls (374.93,184.65) and (374.4,184.11) .. (373.73,184.11) .. controls (373.07,184.11) and (372.53,184.65) .. (372.53,185.31) .. controls (372.53,185.97) and (373.07,186.51) .. (373.73,186.51) .. controls (374.4,186.51) and (374.93,185.97) .. (374.93,185.31) -- cycle ;
\draw  [fill={rgb, 255:red, 0; green, 0; blue, 0 }  ,fill opacity=1 ] (384.95,185.36) .. controls (384.95,184.7) and (384.41,184.16) .. (383.75,184.16) .. controls (383.09,184.16) and (382.55,184.7) .. (382.55,185.36) .. controls (382.55,186.02) and (383.09,186.56) .. (383.75,186.56) .. controls (384.41,186.56) and (384.95,186.02) .. (384.95,185.36) -- cycle ;
\draw  [fill={rgb, 255:red, 0; green, 0; blue, 0 }  ,fill opacity=1 ] (364.86,245.36) .. controls (364.86,244.7) and (364.32,244.16) .. (363.66,244.16) .. controls (362.99,244.16) and (362.46,244.7) .. (362.46,245.36) .. controls (362.46,246.02) and (362.99,246.56) .. (363.66,246.56) .. controls (364.32,246.56) and (364.86,246.02) .. (364.86,245.36) -- cycle ;
\draw  [fill={rgb, 255:red, 0; green, 0; blue, 0 }  ,fill opacity=1 ] (374.98,245.31) .. controls (374.98,244.65) and (374.44,244.11) .. (373.78,244.11) .. controls (373.12,244.11) and (372.58,244.65) .. (372.58,245.31) .. controls (372.58,245.97) and (373.12,246.51) .. (373.78,246.51) .. controls (374.44,246.51) and (374.98,245.97) .. (374.98,245.31) -- cycle ;
\draw  [fill={rgb, 255:red, 0; green, 0; blue, 0 }  ,fill opacity=1 ] (384.99,245.36) .. controls (384.99,244.7) and (384.46,244.16) .. (383.79,244.16) .. controls (383.13,244.16) and (382.59,244.7) .. (382.59,245.36) .. controls (382.59,246.02) and (383.13,246.56) .. (383.79,246.56) .. controls (384.46,246.56) and (384.99,246.02) .. (384.99,245.36) -- cycle ;
\draw  [dash pattern={on 4.5pt off 4.5pt}]  (265.12,125.11) -- (325,185) ;
\draw  [dash pattern={on 4.5pt off 4.5pt}]  (285,185) -- (325,245) ;
\draw  [dash pattern={on 4.5pt off 4.5pt}]  (265.12,125.11) -- (315.12,125.11) ;
\draw  [dash pattern={on 4.5pt off 4.5pt}]  (315.12,125.11) -- (245.12,185.25) ;
\draw  [dash pattern={on 4.5pt off 4.5pt}]  (315.12,125.11) -- (285,185) ;

\draw (286.12,42.91) node [anchor=north west][inner sep=0.75pt]    {$x$};
\draw (159.41,118.94) node [anchor=north west][inner sep=0.75pt]    {$N( x)$};
\draw (159.41,175.74) node [anchor=north west][inner sep=0.75pt]    {$N^{2}( x)$};
\draw (159.41,235.74) node [anchor=north west][inner sep=0.75pt]    {$N^{3}( x)$};
\draw (219.22,178.4) node [anchor=north west][inner sep=0.75pt]    {$v^{*}$};
\draw (216.65,237.65) node [anchor=north west][inner sep=0.75pt]    {$w^{*}$};
\draw (238.32,117.92) node [anchor=north west][inner sep=0.75pt]    {$u^{*}$};
\draw (325.82,121.15) node [anchor=north west][inner sep=0.75pt]    {$u$};
\draw (337.06,177.51) node [anchor=north west][inner sep=0.75pt]    {$v'$};
\draw (337.2,236.8) node [anchor=north west][inner sep=0.75pt]    {$w'$};
\draw (295.73,182.15) node [anchor=north west][inner sep=0.75pt]    {$v$};

\end{tikzpicture}}
\end{center}
Now consider the line $l=\li{w^*w'}$. As $x\notin l$, it holds that $l\in L_{A_1}\cup \{\li{w^*v}\}$, and since $l\cap \left( N(x)\cup \{v\}\right )\neq \emptyset$, we obtain that $w^*w'\in E$. It follows that $w'\in \li{w^*u^*}\setminus \li{xw^*}=\emptyset$, which is a contradiction.
\end{proof}

As $|A_1(x)|=2$, the set $\Li^x_2\cup \Li^x_3\cup  L_{A_1}\cup L_{A_2}\cup  L^*\cup \{\li{xu^*}\}$ contains at least 
$$
|N^3(x)|+|N^2(x)|+2|N(x)|-3+|L^*|=n-4+|N(x)|+ |L^*|
$$
lines. Given that $\ell(G)\leq n-1$, it holds that $|N(x)|=2$, $|L^*|=1$, $\ell(G)=n-1$ and $\Li(G)=\Li^x_2\cup \Li^x_3\cup L_{A_1}\cup L_{A_2}\cup L^*\cup \{\li{xu^*}\}$. Let $N(x)=\{u^*,u\}$, $A_1(x)=\{w^*,\hat{w}\}$ and $B_1(x)=\{v^*,\hat{v}\}$. Since $w^*\in \li{xu^*}\setminus \li{xu}$, the line $\li{xu}$ must belong to $\Li^x_2\cup \Li^x_3\cup L_{A_1}\cup L_{A_2}$, therefore $u\notin C$ by Claim \ref{R1emptyC}, concluding that $\li{\hat{w}u^*}=\li{x\hat{v}}=\li{x\hat{w}}$ and $\hat{v}u^*\in E$. In particular $L^*=\{\li{w^*\hat{v}}\}$, $d(w^*,\hat{v})=2$, $L_{A_1}=\{\li{w^*u},\li{\hat{w}u}\}$ and $\forall \, l\in L_{A_1}\cup L^*$, $u^*\notin l$. We also recall that $uu^*,v^*u,\hat{v}u\notin E$.

\begin{claim}
$N^2(x)\setminus \{v^*,\hat{v}\}\subseteq \li{uu^*}$. 
\end{claim}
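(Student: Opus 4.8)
\emph{Proof strategy.} The plan is to argue by contradiction: suppose some $v\in N^2(x)\setminus\{v^*,\hat v\}$ satisfies $v\notin\li{uu^*}$, and produce a line of $G$ that does not belong to $\Li(G)=\Li^x_2\cup\Li^x_3\cup L_{A_1}\cup L_{A_2}\cup L^*\cup\{\li{xu^*}\}$.

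First I would record that $d(u,u^*)=2$ (since $uu^*\notin E$ while $u,u^*\in N(x)$) and write out the betweenness description of $\li{uu^*}$: a vertex $z\ne u,u^*$ lies in $\li{uu^*}$ exactly when $z\in N(u)\cap N(u^*)$, or $zu^*\in E$ and $d(z,u)=3$, or $zu\in E$ and $d(z,u^*)=3$; in particular $\li{uu^*}\cap N^3(x)=\emptyset$. Since $v\in N^2(x)$ it is adjacent to $u$ or to $u^*$; if it is adjacent to both then $v\in\li{uu^*}$ and we are done, so $v$ is adjacent to exactly one of them, say to $u'$, and, to contradict $v\notin\li{uu^*}$, we may assume $v$ is at distance $2$ from the other vertex $u''\in\{u,u^*\}$. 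Fix a common neighbour $y$ of $v$ and $u''$; since $y$ is adjacent to a vertex of $N(x)$ we get $y\in\{x\}\cup N(x)\cup N^2(x)$, and $y\ne x$ (as $v\not\sim x$) and $y\notin N(x)=\{u,u^*\}$ (because $uu^*\notin E$), so $y\in N^2(x)$.

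Now consider the line $\li{vy}$. As $d(x,v)=d(x,y)=2$ and $vy\in E$, none of the betweenness relations among $x,v,y$ holds, so $x\notin\li{vy}$. But the only lines of $\Li(G)$ avoiding $x$ are those of $L_{A_1}\cup L^*=\{\li{w^*u},\li{\hat wu},\li{w^*\hat v}\}$, so $\li{vy}$ must coincide with one of these three. To finish, I would rule out each coincidence using the rigidity already established: by Claim \ref{ClaimRnonemptyfunc} we have $N(w^*)\cap N^2(x)=\{v^*\}$ and $N(\hat w)\cap N^2(x)=\{\hat v\}$, and we already know $d(w^*,v^*)=d(\hat w,\hat v)=1$, $d(w^*,u)=d(\hat w,u)=3$, $d(w^*,\hat v)=2$, $v^*u,\hat vu,uu^*\notin E$, and that $u^*$ lies on no line of $L_{A_1}\cup L^*$. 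For each target line $\ell$, writing out the conditions that $v,y\in\ell$ together with the condition that each endpoint of $\overline{vy}$ lies on $\ell$ (i.e.\ $|d(\cdot,v)-d(\cdot,y)|=1$ for that endpoint), one checks that every case forces $v=v^*$, or $v=\hat v$, or $u^*\in\ell$, or a distance contradiction of the shape $|3-3|=1$ or $|3-1|=1$ — absurd in all cases. (For instance, if $\li{vy}=\li{w^*u}$ and $v$ is adjacent to $u$, then $u\in\li{vy}$ forces $d(u,y)=2$, hence $y\in\li{w^*u}\cap N^2(x)$ forces $d(w^*,y)=1$, i.e.\ $y=v^*$, and then $u^*\in\li{vv^*}=\li{w^*u}$, a contradiction.)

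The main obstacle is precisely this last verification: it splits on which of $u,u^*$ the vertex $v$ is adjacent to — two superficially symmetric but genuinely different cases, because $v^*$ and $\hat v$ sit on the ``$u^*$\nobreakdash-side'' and must be the only vertices escaping the argument — and, for the coincidence $\li{vy}=\li{w^*\hat v}$, it uses that any vertex of $N^2(x)$ other than $v^*$ lying on $\li{w^*\hat v}$ must be a neighbour of $\hat v$ at distance $3$ from $w^*$, which clashes with $w^*\in\li{vy}$. None of the individual checks is deep, but they must be organised so that the exceptional vertices $v^*$ and $\hat v$ come out exactly as the two exclusions in the statement.
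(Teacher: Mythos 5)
Your argument is correct in substance but takes a genuinely different, and considerably heavier, route than the paper. You share the same first reduction (if $v\notin\li{uu^*}$ then $v$ is adjacent to exactly one of $u,u^*$ and at distance $2$ from the other), but you then pass to a common neighbour $y\in N^2(x)$ of $v$ and the far vertex and analyse the line $\li{vy}$. Since $\li{vy}$ only visibly avoids $x$, you are forced to exclude each of the three $x$-free lines $\li{w^*u},\li{\hat wu},\li{w^*\hat v}$ separately in each of the two adjacency sub-cases — six verifications, all of which do close, with contradictions of exactly the shapes you list. The paper instead takes $u'\in N^2(v)\cap N(x)$ and considers the line $\li{vu'}$ itself: because $d(v,u')=2$ while $w^*$ and $\hat w$ are at distance at least $2$ from both $v$ and $u'$ (as $N(w^*)\cap N^2(x)=\{v^*\}$, $N(\hat w)\cap N^2(x)=\{\hat v\}$ and $v\notin\{v^*,\hat v\}$), every candidate betweenness sum exceeds the diameter, so $\li{vu'}$ misses $x$, $w^*$ and $\hat w$ simultaneously and hence cannot lie in $\Li(G)$ at all — no case analysis needed. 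One small inaccuracy in your sketch: for the coincidence $\li{vy}=\li{w^*\hat v}$ in the sub-case $v\sim u$, the vertex $y$ may equal $\hat v$, and then $w^*\in\li{v\hat v}$ does hold (via $[v\hat v w^*]$, since $v\in\li{w^*\hat v}$ forces $v\sim\hat v$ and $d(w^*,v)=3$), so there is no ``clash''; you must instead invoke $u^*\in\li{v\hat v}\setminus\li{w^*\hat v}$, which your umbrella list of possible contradictions covers but your specific description of that case does not.
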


\begin{proof}
Let $v\in N^2(x)\setminus \{v^*,\hat{v}\}$. The set $N(v)\cap N(x)$ must be nonempty, hence to prove that $v\in \li{uu^*}$, it suffices to show that $N^2(v)\cap N(x)=\emptyset$. By contradiction, suppose there exists $u'\in N^2(v)\cap N(x)$ and consider the line $l=\li{vu'}$. The vertices $x,w^*$ and $\hat{w}$ do not belong to $l$, therefore $l\notin \Li(G)$, yielding a contradiction.
\end{proof}

Given that $d(v^*,u)=2$, consider $v\in N(v^*)\cap N(u)$, observing that $v\in N^2(x)\setminus \{v^*,\hat{v}\}\subseteq \li{uu^*}$ and $d(v,u^*)\leq 2$. $G$ is illustrated in the following diagram.
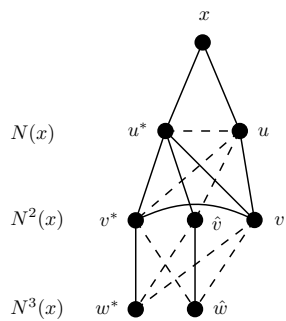
\begin{figure}[H]
\begin{center}      
\scalebox{.75}{\tikzset{every picture/.style={line width=0.75pt}} 

\begin{tikzpicture}[x=0.75pt,y=0.75pt,yscale=-1,xscale=1]

\draw  [fill={rgb, 255:red, 0; green, 0; blue, 0 }  ,fill opacity=1 ] (280.13,55) .. controls (280.13,52.24) and (282.37,50) .. (285.13,50) .. controls (287.89,50) and (290.13,52.24) .. (290.13,55) .. controls (290.13,57.76) and (287.89,60) .. (285.13,60) .. controls (282.37,60) and (280.13,57.76) .. (280.13,55) -- cycle ;
\draw  [fill={rgb, 255:red, 0; green, 0; blue, 0 }  ,fill opacity=1 ] (255.13,114.6) .. controls (255.13,111.84) and (257.37,109.6) .. (260.13,109.6) .. controls (262.89,109.6) and (265.13,111.84) .. (265.13,114.6) .. controls (265.13,117.36) and (262.89,119.6) .. (260.13,119.6) .. controls (257.37,119.6) and (255.13,117.36) .. (255.13,114.6) -- cycle ;
\draw    (285.13,55) -- (260.13,114.6) ;
\draw  [fill={rgb, 255:red, 0; green, 0; blue, 0 }  ,fill opacity=1 ] (235.13,174.74) .. controls (235.13,171.98) and (237.37,169.74) .. (240.13,169.74) .. controls (242.89,169.74) and (245.13,171.98) .. (245.13,174.74) .. controls (245.13,177.5) and (242.89,179.74) .. (240.13,179.74) .. controls (237.37,179.74) and (235.13,177.5) .. (235.13,174.74) -- cycle ;
\draw  [fill={rgb, 255:red, 0; green, 0; blue, 0 }  ,fill opacity=1 ] (235.13,234.6) .. controls (235.13,231.84) and (237.37,229.6) .. (240.13,229.6) .. controls (242.89,229.6) and (245.13,231.84) .. (245.13,234.6) .. controls (245.13,237.36) and (242.89,239.6) .. (240.13,239.6) .. controls (237.37,239.6) and (235.13,237.36) .. (235.13,234.6) -- cycle ;
\draw    (240.13,174.74) -- (240.13,234.6) ;
\draw    (260.13,114.6) -- (240.13,174.74) ;
\draw  [fill={rgb, 255:red, 0; green, 0; blue, 0 }  ,fill opacity=1 ] (305.13,114.6) .. controls (305.13,111.84) and (307.37,109.6) .. (310.13,109.6) .. controls (312.89,109.6) and (315.13,111.84) .. (315.13,114.6) .. controls (315.13,117.36) and (312.89,119.6) .. (310.13,119.6) .. controls (307.37,119.6) and (305.13,117.36) .. (305.13,114.6) -- cycle ;
\draw    (285.13,55) -- (310.13,114.6) ;
\draw  [fill={rgb, 255:red, 0; green, 0; blue, 0 }  ,fill opacity=1 ] (275.01,174.49) .. controls (275.01,171.73) and (277.24,169.49) .. (280.01,169.49) .. controls (282.77,169.49) and (285.01,171.73) .. (285.01,174.49) .. controls (285.01,177.25) and (282.77,179.49) .. (280.01,179.49) .. controls (277.24,179.49) and (275.01,177.25) .. (275.01,174.49) -- cycle ;
\draw  [fill={rgb, 255:red, 0; green, 0; blue, 0 }  ,fill opacity=1 ] (275.01,234.49) .. controls (275.01,231.73) and (277.24,229.49) .. (280.01,229.49) .. controls (282.77,229.49) and (285.01,231.73) .. (285.01,234.49) .. controls (285.01,237.25) and (282.77,239.49) .. (280.01,239.49) .. controls (277.24,239.49) and (275.01,237.25) .. (275.01,234.49) -- cycle ;
\draw    (280.01,174.49) -- (280.01,234.35) ;
\draw  [dash pattern={on 4.5pt off 4.5pt}]  (280.01,174.49) -- (240.13,234.6) ;
\draw  [fill={rgb, 255:red, 0; green, 0; blue, 0 }  ,fill opacity=1 ] (315.01,174.49) .. controls (315.01,171.73) and (317.24,169.49) .. (320.01,169.49) .. controls (322.77,169.49) and (325.01,171.73) .. (325.01,174.49) .. controls (325.01,177.25) and (322.77,179.49) .. (320.01,179.49) .. controls (317.24,179.49) and (315.01,177.25) .. (315.01,174.49) -- cycle ;
\draw    (260.13,114.6) -- (280.01,174.49) ;
\draw    (310.13,114.6) -- (320.01,174.49) ;
\draw    (260.13,114.6) -- (320.01,174.49) ;
\draw  [dash pattern={on 4.5pt off 4.5pt}]  (260.13,114.6) -- (310.13,114.6) ;
\draw  [dash pattern={on 4.5pt off 4.5pt}]  (310.13,114.6) -- (240.13,174.74) ;
\draw  [dash pattern={on 4.5pt off 4.5pt}]  (310.13,114.6) -- (280.01,174.49) ;
\draw    (240.13,174.74) .. controls (269,160.6) and (291,160.2) .. (320.01,174.49) ;
\draw  [dash pattern={on 4.5pt off 4.5pt}]  (240.13,174.74) -- (280.01,234.49) ;
\draw  [dash pattern={on 4.5pt off 4.5pt}]  (240.13,234.6) -- (320.01,174.49) ;
\draw  [dash pattern={on 4.5pt off 4.5pt}]  (320.01,174.49) -- (280.01,234.49) ;

\draw (281.13,32.4) node [anchor=north west][inner sep=0.75pt]    {$x$};
\draw (154.42,108.43) node [anchor=north west][inner sep=0.75pt]    {$N( x)$};
\draw (154.42,165.23) node [anchor=north west][inner sep=0.75pt]    {$N^{2}( x)$};
\draw (154.42,225.23) node [anchor=north west][inner sep=0.75pt]    {$N^{3}( x)$};
\draw (214.22,167.89) node [anchor=north west][inner sep=0.75pt]    {$v^{*}$};
\draw (211.65,227.14) node [anchor=north west][inner sep=0.75pt]    {$w^{*}$};
\draw (233.32,107.41) node [anchor=north west][inner sep=0.75pt]    {$u^{*}$};
\draw (320.82,110.63) node [anchor=north west][inner sep=0.75pt]    {$u$};
\draw (332.06,170.75) node [anchor=north west][inner sep=0.75pt]    {$v$};
\draw (289.9,170.3) node [anchor=north west][inner sep=0.75pt]    {$\hat{v}$};
\draw (290.78,226.96) node [anchor=north west][inner sep=0.75pt]    {$\hat{w}$};

\end{tikzpicture}}
\end{center}
\caption{Diagram of $G$. Dashed lines represent missing edges.} 
\label{fig:R1emptyDiagram2}
\end{figure}

It follows that $vu^*\in E$ and since $\li{uu^*}\cap N^3(x)=\emptyset$ and $v\in \li{uu^*}$, the line $\li{uu^*}$ must be equal to $\li{xv}\in \Li^x_2$. As $N^2(x)\setminus \{v^*,\hat{v}\}\subseteq \li{uu^*}=\li{xv}$, we conclude that $N^2(x)=\{v,v^*,\hat{v}\}$. Given that $\li{xv}\cap N^3(x)=\li{uu^*}\cap N^3(x)=\emptyset$, we also conclude that $N^3(x)=\{w^*,\hat{w}\}$. Moreover, as $d(w^*)\geq 2$, we obtain that $w^*\hat{w}\in E$. Now, we have that $N^2(x)\subseteq N(u^*)$ and $\forall \, l\in \Li(G)$, $x\in l\,\,\lor\,\,u^*\notin l $. It follows that $G[N^2(x)]$ is a complete graph and therefore $G$ is isomorphic to $M_{4,2,2}$. 

This concludes the proof of Proposition \ref{PropR1emptyA1nonempty}. We now proceed to prove the following Proposition.

\begin{prop}
If $A_1(x)= \emptyset$, then $G$ is isomorphic to $G_a$, $G_b$ or $G_c$.
\label{PropR1A1empty}
\end{prop}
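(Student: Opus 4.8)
The plan is to follow the template of the earlier propositions in this section: read off the local structure around a vertex $w^*\in A(x)$, build a large family of pairwise distinct lines, and play this off against $\ell(G)\le n-1$ to pin down $n$ together with the parameters $d(x)$, $|N^2(x)|$, $|N^3(x)|$, $|A(x)|$; a short case analysis then identifies $G$ with $G_a$, $G_b$, or $G_c$.

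First I would record the local structure. Since $A_1(x)=\emptyset$ we have $A(x)=A_2(x)\neq\emptyset$, so fix $w^*\in A(x)$, set $v^*=f(w^*)$, and choose $u^*\in N(v^*)\cap N(x)$, giving a geodesic $x,u^*,v^*,w^*$. By Claim~\ref{ClaimRnonemptyfunc} and Lemma~\ref{LemmaNi}, $N(w^*)\cap N^2(x)=\{v^*\}$ and $N(w^*)\cap N(x)=\emptyset$, hence for $u\in N(x)$ one has $d(u,w^*)=2$ iff $u\in N(v^*)$; thus $N^2(w^*)\cap N(x)=N(v^*)\cap N(x)$, and summing over $A(x)$, $|L_{A_2}|=\sum_{w\in A(x)}|N(f(w))\cap N(x)|\ge |A(x)|$. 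Also $w^*\in A_2(x)$ means no line of $L_{A_2}$ equals any $\overline{xw}$ with $w\in A(x)$, and since $R_1=\emptyset$ we have $d(x)\ge 2$, so $x$ has at least two neighbours.

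Next comes the counting. The set $\Li^x_2\cup\Li^x_3\cup L_{A_2}$ is a disjoint union of $|N^2(x)|+|N^3(x)|-|A(x)|+|L_{A_2}|\ge|N^2(x)|+|N^3(x)|$ lines, all through $x$. I would enlarge it using: the lines $\overline{w^*v}$ for $v\in N^2(x)-v^*$ (each missing $x$, by a distance check from $d(x,w^*)=3$, $d(x,v)=2$, $v\notin N(w^*)$ and Lemmas~\ref{LemmaNi}, \ref{Lemmadxdy}); the lines $\overline{ww'}$ for distinct $w,w'\in N^3(x)$ (missing $x$ by Lemma~\ref{Lemmadxdy}); the lines $\overline{uu'}$ for distinct $u,u'\in N(x)$ with $uu'\notin E$ (these contain $x$ but, since $x$ lies on the line of two of its neighbours, need not belong to $\Li^x_2\cup\Li^x_3\cup L_{A_2}$); and the lines $\overline{xu}$, $u\in N(x)$, themselves. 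Using Lemmas~\ref{LemmaNi}, \ref{Lemmadxdy}, \ref{LemmaIndCom} to certify that the genuinely new members of this family are pairwise distinct and disjoint from $\Li^x_2\cup\Li^x_3\cup L_{A_2}$, the bound $\ell(G)\le n-1=d(x)+|N^2(x)|+|N^3(x)|$ with $d(x)\ge2$ gets violated unless $|N^2(x)|=|N^3(x)|=|A(x)|=1$ and $d(x)=2$. In more detail I expect: $|N^2(x)|\ge2$ is ruled out via the $\overline{w^*v}$ lines, as in Proposition~\ref{PropR1nonemptyA1empty}; $|N^3(x)|\ge2$ via the $\overline{ww'}$ lines; and once $|N^2(x)|=|N^3(x)|=1$, the lines generated by pairs of neighbours of $x$ together with the $\overline{xu}$ lines overflow the budget whenever $d(x)\ge3$, the cases being distinguished by the number of edges of $G[N(x)]$. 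Hence $n=5$ and $\ell(G)=4$.

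Finally I would finish on five vertices. Writing $N(x)=\{u^*,u\}$, $N^2(x)=\{v^*\}$, $N^3(x)=\{w^*\}$, the edges $u^*v^*$ and $v^*w^*$ are forced, and $N(w^*)\cap N^2(x)=\{v^*\}$ with $|N^3(x)|=1$ and $d(x,w^*)=3$ force $N(w^*)=\{v^*\}$; none of $xv^*$, $xw^*$, $u^*w^*$, $uw^*$ can be edges, so the only optional edges are $u^*u$ and $uv^*$, leaving four candidates. The one with neither edge is a path of length four, of diameter four, hence impossible; the other three are the $4$-cycle on $x,u^*,v^*,u$ (in this cyclic order) with a pendant $w^*$ at $v^*$ (which is $G_a$), the graph $K_4$ minus $xv^*$ on $\{x,u^*,u,v^*\}$ with a pendant $w^*$ at $v^*$ (which is $G_b$), and the triangle $x,u,u^*$ with the path $u^*v^*w^*$ attached at $u^*$ (which is $G_c$); each has diameter $3$, $\ell=4$ and $A_1(x)=\emptyset$, as it must. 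The hard part will be the counting step, namely controlling the coincidences among the auxiliary lines: the lines $\overline{w^*v}$ can coincide only across the split of $N^2(x)-v^*$ by $d(w^*,\cdot)\in\{2,3\}$ and only for adjacent pairs, a line $\overline{xu}$ can fall inside $L_{A_2}$ exactly when $u\in N(v^*)$, and the lines $\overline{uu'}$ behave differently according to whether $uu'\in E$; disentangling all of this so as still to net more than $n-1$ lines whenever any of $|N^2(x)|\ge2$, $|N^3(x)|\ge2$, $d(x)\ge3$, $|A(x)|\ge2$ holds is the delicate step and is precisely what forces $n=5$.
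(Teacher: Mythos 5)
Your overall architecture (local structure around $w^*$, a counting step forcing $n=5$ with $d(x)=2$, $|N^2(x)|=|N^3(x)|=1$, then an enumeration of the five-vertex candidates) matches the paper's, and your endgame is fine --- indeed slightly cleaner than the paper's split according to whether $V=\li{xw^*}$ or $V\in L_{A_2}$, since you just enumerate the two optional edges $uu^*$, $uv^*$ and discard the path of diameter four. But the counting step, which you yourself flag as ``the delicate step,'' is where the proof actually lives, and as sketched it has genuine gaps. First, your base family omits $L_1^*=\{\li{w^*u}:u\in N^3(w^*)\cap N(x)\}$; together with the $w^*$-part of $L_{A_2}$ this is how the paper converts the partition of $N(x)$ by distance to $w^*$ into $|N(x)|$ pairwise distinct lines and reaches $n-2$. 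You try to recover the $N(x)$-contribution from $\li{xu}$ and $\li{uu'}$ instead, but these all contain $x$, exactly like every line of $\Li^x_2\cup\Li^x_3\cup L_{A_2}$, so their distinctness from the base family (and from each other --- Lemma~\ref{LemmaNi} says nothing about $\li{xu}$ for $u\in N(x)$) is precisely the hard point; in $G_a$ and $G_b$ one already has $\li{xu}\in L_{A_2}\cup\Li^x_2$, so no general separation criterion is available. The paper's separation device in the final stage is that all $n-2$ base lines contain $w^*$ while no line of $L=\{\li{uu'}:u,u'\in N^2(w^*)\cap N(x)\}\cup\{\li{xu}:u\in N^3(w^*)\cap N(x)\}$ does (via Lemma~\ref{Lemmadxdy}); nothing of this sort appears in your sketch.

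Second, and more fundamentally, you assert that pure counting ``gets violated'' whenever $|N^2(x)|\ge 2$, $|N^3(x)|\ge 2$ or $d(x)\ge 3$. The paper's own proof shows this is not how these cases fall: for $|N^3(x)|\ge 2$ and for $|N^2(x)|=2$ it only reaches $n-1$ lines, concludes that $\Li(G)$ equals that family exactly, and then exhibits a further line ($\li{vu^*}$, resp.\ a line through a vertex of $N(x)-u^*$) not in it. A one-shot count of your families does not reach $n$ in these configurations (e.g.\ with $|N^2(x)|=1$, $|N^3(x)|=2$ your families give at most $3+1$ lines plus whatever the uncontrolled $N(x)$-lines contribute), so the mechanism that actually eliminates them is missing. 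Finally, your use of $|\{\li{w^*v}:v\in N^2(x)-v^*\}|=|N^2(x)|-1$ is premature: the paper obtains this only after $N^3(x)=\{w^*\}$ forces $d(w^*)=1$, whence $w^*\notin R$ and $\Li^{w^*}_2\cap\Li^{w^*}_3=\emptyset$; before that, a $v$ at distance $2$ and a $v'$ at distance $3$ from $w^*$ could generate the same line. These are not presentational issues --- they are the substance of the proof, and the proposal does not supply it.
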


\subsubsection*{Proof of Proposition \ref{PropR1A1empty}}

We assume that $A_1(x)=\emptyset$. Let $w^*\in A_2(x)$, $v^*=f(w^*)$ and $u^*\in N^2(w^*)\cap N(x)$. By definition, $\li{xv^*}=\li{xw^*}\neq \li{w^*u^*}$. We consider the sets of lines $\Li^x_2,\Li^x_3,L_{A_2}$ and $L_1^*=\{\li{w^*u}:u\in N^3(w^*)\cap N(x)\}$. Since no line in $L_1^*$ contains $x$, the set $L_1^*$ is disjoint from $\Li^x_2,\Li^x_3$ and $L_{A_2}$. Therefore, the set $\Li^x_2\cup \Li^x_3\cup L_{A_2}\cup L_1^*$ contains 
$$
|N^3(x)|+|N^2(x)|-|A_2(x)|+\sum_{w\in A_2(x)-w^*}|N^2(w)\cap N(x)|+|N^2(w^*)\cap N(x)|+|N^3(w^*)\cap N(x)|\geq n-2
$$ 
lines, where the inequality comes from the fact that $|N^2(w^*)\cap N(x)|+|N^3(w^*)\cap N(x)|=|N(x)|$ and $\sum_{w\in A_2(x)-w^*}|N^2(w)\cap N(x)|\geq |A_2(x)|-1$.

\begin{claim}
$|N^3(x)|=1$.
\end{claim}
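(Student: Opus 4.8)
The plan is to first upgrade the lower bound so that the full line set of $G$ is pinned down, and then argue by contradiction. The displayed count equals $n-|A_2(x)|+\sum_{w\in A_2(x)-w^*}|N^2(w)\cap N(x)|$ (grouping $|N^2(w^*)\cap N(x)|+|N^3(w^*)\cap N(x)|=|N(x)|$); since $|N^2(w)\cap N(x)|\geq 1$ for every $w\in A_2(x)$ — a neighbour in $N(x)$ of $f(w)$ always lies at distance exactly $2$ from $w$ — this is at least $n-1$. With $\ell(G)\leq n-1$ this forces $\ell(G)=n-1$ and $\mathcal{L}(G)=\Li^x_2\cup\Li^x_3\cup L_{A_2}\cup L_1^*$. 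As $\emptyset\neq A(x)\subseteq N^3(x)$, it suffices to rule out $|N^3(x)|\geq 2$; so I would fix $w\in N^3(x)\setminus\{w^*\}$ and aim for a contradiction.

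The first sub-step is to show $w^*w\in E$. By Lemma \ref{Lemmadxdy} the line $\overline{w^*w}$ omits $x$, hence lies in none of $\Li^x_2,\Li^x_3,L_{A_2}$; since $\mathcal{L}(G)$ is the union above, $\overline{w^*w}\in L_1^*$, so $\overline{w^*w}=\overline{w^*u}$ for some $u\in N^3(w^*)\cap N(x)$. Then $w\in\overline{w^*u}$, and a check of the three betweenness options, using $d(w^*,u)=3$ and $d(w,u)\geq d(x,w)-d(x,u)=2$, forces $[w^*wu]$ and hence $d(w^*,w)=1$. Consequently $v^*\neq w$ and $wv^*\notin E$ (Claim \ref{ClaimRnonemptyfunc} gives $N(v^*)\cap N^3(x)=\{w^*\}$), so $d(w,v^*)\leq d(w,w^*)+d(w^*,v^*)=2$ yields $d(w,v^*)=2$.

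The contradiction then comes from the line $\overline{wv^*}$. It omits $x$ (each of the three betweenness relations for $x$ would force a distance $\geq 4$) but contains $w^*$ via $[ww^*v^*]$, since $d(w,v^*)=2=d(w,w^*)+d(w^*,v^*)$; hence it lies in none of $\Li^x_2,\Li^x_3,L_{A_2}$, so $\overline{wv^*}\in L_1^*$, say $\overline{wv^*}=\overline{w^*u'}$ with $u'\in N^3(w^*)\cap N(x)$. From $w\in\overline{w^*u'}$ the same betweenness analysis as above gives $d(w,u')=2$; from $v^*\in\overline{w^*u'}$, the only admissible relation $[w^*v^*u']$ (the other two would force a distance $4$ or contradict $d(w^*,v^*)=1$) gives $d(v^*,u')=2$. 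But then $d(u',w)=d(u',v^*)=2$, so Lemma \ref{Lemmadxdy} gives $u'\notin\overline{wv^*}=\overline{w^*u'}$, contradicting $u'\in\overline{w^*u'}$. Therefore $|N^3(x)|\leq 1$, and since $A(x)\neq\emptyset$ we conclude $|N^3(x)|=1$.

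The step I expect to require the most care is the choice of witness line. Once $\mathcal{L}(G)=\Li^x_2\cup\Li^x_3\cup L_{A_2}\cup L_1^*$ is available, every line that avoids $x$ belongs to $L_1^*$ and hence passes through $w^*$, so it is enough to exhibit a line through $w^*$ but not through $x$ that cannot be any $\overline{w^*u}$; the line $\overline{wv^*}$ does exactly this, because coinciding with some $\overline{w^*u'}$ would put $u'$ at distance $2$ from both $w$ and $v^*$, which Lemma \ref{Lemmadxdy} forbids inside $\overline{wv^*}$. Establishing $w^*w\in E$ beforehand is what secures $d(w,v^*)=2$ and hence $w^*\in\overline{wv^*}$, so the order of the two sub-steps matters; everything else is the routine diameter-$3$ betweenness bookkeeping already used throughout the section.
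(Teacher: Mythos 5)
There is a genuine gap, and it sits at the very first step: the count of $|\Li^x_2\cup \Li^x_3\cup L_{A_2}\cup L_1^*|$. Using $|N^2(w^*)\cap N(x)|+|N^3(w^*)\cap N(x)|=|N(x)|$ and $|N(x)|+|N^2(x)|+|N^3(x)|=n-1$, the displayed quantity equals $n-1-|A_2(x)|+\sum_{w\in A_2(x)-w^*}|N^2(w)\cap N(x)|$, not $n-|A_2(x)|+\sum_{w\in A_2(x)-w^*}|N^2(w)\cap N(x)|$; combined with $\sum_{w\in A_2(x)-w^*}|N^2(w)\cap N(x)|\geq |A_2(x)|-1$ this yields only $n-2$, exactly as stated in the text preceding the claim. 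Consequently you cannot conclude that $\Li(G)=\Li^x_2\cup \Li^x_3\cup L_{A_2}\cup L_1^*$: the hypothesis $\ell(G)\leq n-1$ leaves room for one further line outside this union. Both of your key deductions --- that $\li{w^*w}\in L_1^*$ (used to force $w^*w\in E$) and that $\li{wv^*}\in L_1^*$ (used for the final contradiction) --- rest on that unproved identity, and either of these two lines could instead be the single missing $(n-1)$-st line, in which case no contradiction arises. In effect each branch of your argument exhibits at most one line outside the guaranteed $(n-2)$-line union, whereas two independent new lines are needed to exceed $n-1$.

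The paper's proof is organized precisely around this point: it takes $w\in N^3(x)-w^*$ minimizing $d(w^*,\cdot)$, shows directly (without assuming membership in $L_1^*$, and handling the possibility $d(w^*,w)=3$ as well as $d(w^*,w)=1$) that $l_1=\li{wv^*}$ contains $w^*$, avoids $x$, and meets no vertex of $N^3(w^*)\cap N(x)$, so that $l_1$ is a genuinely new line. Only then is $\Li(G)=\Li^x_2\cup \Li^x_3\cup L_{A_2}\cup L_1^*\cup\{l_1\}$ pinned down, after which a second new line $l_2=\li{vu^*}$, for a suitable $v\in N(w)\cap N^2(w^*)\cap N^2(x)$, delivers the contradiction. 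Your betweenness computations in the later steps are correct, but to repair the argument you would have to produce this second independent line in each of your two branches.
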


\begin{proof}
For the sake of contradiction, suppose that $|N^3(x)|\geq 2$. Let $w$ be the vertex in $ N^3(x)-w^*$ minimizing $d(w^*,\cdot )$.
$G$ is represented in the following diagram.
\begin{center}
\scalebox{.65}{\tikzset{every picture/.style={line width=0.75pt}} 

\begin{tikzpicture}[x=0.75pt,y=0.75pt,yscale=-1,xscale=1]

\draw  [fill={rgb, 255:red, 0; green, 0; blue, 0 }  ,fill opacity=1 ] (271.71,65) .. controls (271.71,62.24) and (273.95,60) .. (276.71,60) .. controls (279.48,60) and (281.71,62.24) .. (281.71,65) .. controls (281.71,67.76) and (279.48,70) .. (276.71,70) .. controls (273.95,70) and (271.71,67.76) .. (271.71,65) -- cycle ;
\draw  [fill={rgb, 255:red, 0; green, 0; blue, 0 }  ,fill opacity=1 ] (241.71,125) .. controls (241.71,122.24) and (243.95,120) .. (246.71,120) .. controls (249.48,120) and (251.71,122.24) .. (251.71,125) .. controls (251.71,127.76) and (249.48,130) .. (246.71,130) .. controls (243.95,130) and (241.71,127.76) .. (241.71,125) -- cycle ;
\draw    (276.71,65) -- (246.71,125) ;
\draw  [fill={rgb, 255:red, 0; green, 0; blue, 0 }  ,fill opacity=1 ] (221.71,185.14) .. controls (221.71,182.38) and (223.95,180.14) .. (226.71,180.14) .. controls (229.48,180.14) and (231.71,182.38) .. (231.71,185.14) .. controls (231.71,187.9) and (229.48,190.14) .. (226.71,190.14) .. controls (223.95,190.14) and (221.71,187.9) .. (221.71,185.14) -- cycle ;
\draw  [fill={rgb, 255:red, 0; green, 0; blue, 0 }  ,fill opacity=1 ] (221.71,245) .. controls (221.71,242.24) and (223.95,240) .. (226.71,240) .. controls (229.48,240) and (231.71,242.24) .. (231.71,245) .. controls (231.71,247.76) and (229.48,250) .. (226.71,250) .. controls (223.95,250) and (221.71,247.76) .. (221.71,245) -- cycle ;
\draw    (226.71,185.14) -- (226.71,245) ;
\draw    (246.71,125) -- (226.71,185.14) ;
\draw  [fill={rgb, 255:red, 0; green, 0; blue, 0 }  ,fill opacity=1 ] (283.01,125.25) .. controls (283.01,124.59) and (282.47,124.05) .. (281.81,124.05) .. controls (281.14,124.05) and (280.61,124.59) .. (280.61,125.25) .. controls (280.61,125.91) and (281.14,126.45) .. (281.81,126.45) .. controls (282.47,126.45) and (283.01,125.91) .. (283.01,125.25) -- cycle ;
\draw  [fill={rgb, 255:red, 0; green, 0; blue, 0 }  ,fill opacity=1 ] (293.13,125.2) .. controls (293.13,124.54) and (292.59,124) .. (291.93,124) .. controls (291.27,124) and (290.73,124.54) .. (290.73,125.2) .. controls (290.73,125.86) and (291.27,126.4) .. (291.93,126.4) .. controls (292.59,126.4) and (293.13,125.86) .. (293.13,125.2) -- cycle ;
\draw  [fill={rgb, 255:red, 0; green, 0; blue, 0 }  ,fill opacity=1 ] (303.15,125.25) .. controls (303.15,124.59) and (302.61,124.05) .. (301.95,124.05) .. controls (301.28,124.05) and (300.75,124.59) .. (300.75,125.25) .. controls (300.75,125.91) and (301.28,126.45) .. (301.95,126.45) .. controls (302.61,126.45) and (303.15,125.91) .. (303.15,125.25) -- cycle ;
\draw  [fill={rgb, 255:red, 0; green, 0; blue, 0 }  ,fill opacity=1 ] (313.95,185.25) .. controls (313.95,184.59) and (313.41,184.05) .. (312.75,184.05) .. controls (312.08,184.05) and (311.55,184.59) .. (311.55,185.25) .. controls (311.55,185.91) and (312.08,186.45) .. (312.75,186.45) .. controls (313.41,186.45) and (313.95,185.91) .. (313.95,185.25) -- cycle ;
\draw  [fill={rgb, 255:red, 0; green, 0; blue, 0 }  ,fill opacity=1 ] (324.07,185.2) .. controls (324.07,184.54) and (323.53,184) .. (322.87,184) .. controls (322.21,184) and (321.67,184.54) .. (321.67,185.2) .. controls (321.67,185.86) and (322.21,186.4) .. (322.87,186.4) .. controls (323.53,186.4) and (324.07,185.86) .. (324.07,185.2) -- cycle ;
\draw  [fill={rgb, 255:red, 0; green, 0; blue, 0 }  ,fill opacity=1 ] (334.08,185.25) .. controls (334.08,184.59) and (333.55,184.05) .. (332.88,184.05) .. controls (332.22,184.05) and (331.68,184.59) .. (331.68,185.25) .. controls (331.68,185.91) and (332.22,186.45) .. (332.88,186.45) .. controls (333.55,186.45) and (334.08,185.91) .. (334.08,185.25) -- cycle ;
\draw  [fill={rgb, 255:red, 0; green, 0; blue, 0 }  ,fill opacity=1 ] (271.55,185.14) .. controls (271.55,182.38) and (273.78,180.14) .. (276.55,180.14) .. controls (279.31,180.14) and (281.55,182.38) .. (281.55,185.14) .. controls (281.55,187.9) and (279.31,190.14) .. (276.55,190.14) .. controls (273.78,190.14) and (271.55,187.9) .. (271.55,185.14) -- cycle ;
\draw  [fill={rgb, 255:red, 0; green, 0; blue, 0 }  ,fill opacity=1 ] (271.55,245) .. controls (271.55,242.24) and (273.78,240) .. (276.55,240) .. controls (279.31,240) and (281.55,242.24) .. (281.55,245) .. controls (281.55,247.76) and (279.31,250) .. (276.55,250) .. controls (273.78,250) and (271.55,247.76) .. (271.55,245) -- cycle ;
\draw    (276.55,185.14) -- (276.55,245) ;
\draw  [dash pattern={on 4.5pt off 4.5pt}]  (276.55,185.14) -- (226.71,245) ;
\draw  [dash pattern={on 4.5pt off 4.5pt}]  (226.71,185.14) -- (276.55,245) ;
\draw  [fill={rgb, 255:red, 0; green, 0; blue, 0 }  ,fill opacity=1 ] (314.11,245.25) .. controls (314.11,244.59) and (313.58,244.05) .. (312.91,244.05) .. controls (312.25,244.05) and (311.71,244.59) .. (311.71,245.25) .. controls (311.71,245.91) and (312.25,246.45) .. (312.91,246.45) .. controls (313.58,246.45) and (314.11,245.91) .. (314.11,245.25) -- cycle ;
\draw  [fill={rgb, 255:red, 0; green, 0; blue, 0 }  ,fill opacity=1 ] (324.24,245.2) .. controls (324.24,244.54) and (323.7,244) .. (323.04,244) .. controls (322.37,244) and (321.84,244.54) .. (321.84,245.2) .. controls (321.84,245.86) and (322.37,246.4) .. (323.04,246.4) .. controls (323.7,246.4) and (324.24,245.86) .. (324.24,245.2) -- cycle ;
\draw  [fill={rgb, 255:red, 0; green, 0; blue, 0 }  ,fill opacity=1 ] (334.25,245.25) .. controls (334.25,244.59) and (333.72,244.05) .. (333.05,244.05) .. controls (332.39,244.05) and (331.85,244.59) .. (331.85,245.25) .. controls (331.85,245.91) and (332.39,246.45) .. (333.05,246.45) .. controls (333.72,246.45) and (334.25,245.91) .. (334.25,245.25) -- cycle ;

\draw (272.71,42.4) node [anchor=north west][inner sep=0.75pt]    {$x$};
\draw (141,118.83) node [anchor=north west][inner sep=0.75pt]    {$N( x)$};
\draw (141,175.63) node [anchor=north west][inner sep=0.75pt]    {$N^{2}( x)$};
\draw (141,235.63) node [anchor=north west][inner sep=0.75pt]    {$N^{3}( x)$};
\draw (200.81,178.29) node [anchor=north west][inner sep=0.75pt]    {$v^{*}$};
\draw (198.24,237.54) node [anchor=north west][inner sep=0.75pt]    {$w^{*}$};
\draw (219.91,117.81) node [anchor=north west][inner sep=0.75pt]    {$u^{*}$};
\draw (285.6,240.4) node [anchor=north west][inner sep=0.75pt]    {$w$};

\end{tikzpicture}}
\end{center}
Observe that $d(w^*,w)\in \{1,3\}$ and that if $d(w^*,w)=3$, it must hold that $d(w^*)=1$ and $d(v^*,w)=2$. We consider the line $l_1=\li{wv^*}$, observing that $w^*\in l_1$. Since $x\notin l_1$ and $l_1\cap N^3(w^*)\cap N(x) =\emptyset$, the line $l_1$ does not belong to $\Li^x_2\cup \Li^x_3\cup L_{A_2}\cup L_1^*$. It follows that $\ell(G)=n-1$ and $\Li(G)=\Li^x_2\cup \Li^x_3\cup L_{A_2}\cup L_1^*\cup \{l_1\}$. Consider $v\in N(w)\cap N^2(w^*)\cap N^2(x)\neq \emptyset$. As $x,v^*,w^*\in \li{xu^*}$, we have that $\li{xu^*}\in \{\li{xv^*}\}\cup L_{A_2}$. If $\li{xu^*}=\li{w'u'}\in L_{A_2}$, necessarily $d(w',u^*)=2$, thus $u'=u^*$, and since $w^*\in \li{xu^*}=\li{w'u^*}$, the vertex $w'$ must be equal to $w^*$. This implies that $\li{xu^*}\in \{\li{xv^*},\li{w^*u^*}\}$. Observe that $v\notin \li{xv^*}\cup \li{w^*u^*}$, therefore $v\notin \li{xu^*}$ and $d(v,u^*)=2$. We now consider the line $l_2=\li{vu^*}$. Given that $x,w^*\notin  l_2$, $l_2\notin \Li(G)$, which is a contradiction. 
\end{proof}

We have that $|N^3(x)|=1$, hence $N^3(x)=\{w^*\}$, $\Li^x_3=\{\li{xw^*}\}$ and $L_{A_2}=\{\li{w^*u}:u\in N^2(w^*)\cap N(x)\}$. Furthermore, $w^*$ has degree 1, which implies that $w^*\notin R$, that is, $\Li^{w^*}_2\cap \Li^{w^*}_3=\emptyset$. Now consider the set of lines $L_2^*=\{\li{w^*v}:v\in N^2(x)-v^*\}$. The lines in $L_2^*$ do not contain $x$, thus $L_2^*$ is disjoint from $\Li^x_2,\Li^x_3$ and $ L_{A_2}$. Given that $\Li^{w^*}_2\cap \Li^{w^*}_3=\emptyset$, the set $L_2^*$ is disjoint from $L_1^*$ and $|L_2^*|=|N^2(x)|-1$. The set $\Li^x_2\cup  \Li^x_3 \cup L_{A_2} \cup L_1^*\cup L_2^*$ contains $n-3+|N^2(x)|$ lines, implying that $|N^2(x)|\leq 2$.

\begin{claim}
$|N^2(x)|=1$.
\end{claim}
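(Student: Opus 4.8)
The plan is to assume for contradiction that $|N^2(x)|=2$. By the count just established this forces $\ell(G)=n-1$ and $\Li(G)=\Li^x_2\cup\Li^x_3\cup L_{A_2}\cup L_1^*\cup L_2^*$. Write $N^2(x)=\{v^*,v\}$ with $v^*=f(w^*)$; since $|N^3(x)|=1$ we have $N(w^*)=\{v^*\}$, $A(x)=A_2(x)=\{w^*\}$, and $u^*\in N^2(w^*)\cap N(x)$ satisfies $u^*v^*\in E$. The observation driving the argument is that every line in $\Li(G)$ contains $w^*$, with the single exception of $\li{xv}$: indeed $\li{xv^*}=\li{xw^*}$ and, by construction, each line of $L_{A_2}\cup L_1^*\cup L_2^*$ passes through $w^*$, while $w^*\notin\li{xv}$ because $d(v,w^*)\ge 2$ (as $v\neq v^*$ and $N(w^*)=\{v^*\}$) makes $[xvw^*]$, $[vxw^*]$ and $[xw^*v]$ all impossible. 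Hence any line of $G$ avoiding $w^*$ equals $\li{xv}$, and any line avoiding both $x$ and $w^*$ is a contradiction.

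From this I extract structure. First, no $u\in N(x)$ has $d(u,v)=2$, since then $\li{uv}$ would avoid both $x$ and $w^*$ (as $d(u,v)=2$ and $w^*$ is pendant with $d(v,w^*)\ge 2$); so $d(u,v)\in\{1,3\}$ for all $u\in N(x)$, which gives $\li{xv}=\{x,v\}\cup N(x)$, using Lemma \ref{LemmaNi} to exclude $v^*$. Second, $G[N(x)]$ has no edge $uu'$ with $d(u,w^*)=d(u',w^*)$, because such $\li{uu'}$ avoids $w^*$ (pendant $w^*$) and $x$ (an edge); as $d(u,w^*)=2\iff u\in N(v^*)$ for $u\in N(x)$, every edge of $G[N(x)]$ has exactly one endpoint in $N(v^*)$. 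Third, for $u\in N(x)\setminus N(v^*)$ the line $\li{xu}$ avoids $w^*$ (here $uv^*\notin E$ forces $d(u,w^*)=3$), so $\li{xu}=\li{xv}\supseteq N(x)$; this forces $uu'\notin E$ for every $u'\in N(x)-u$, and, since $v^*\notin\li{xv}$ by Lemma \ref{LemmaNi}, also $d(u,v^*)=2$.

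Now split on $|N(v^*)\cap N(x)|$ (which contains $u^*$). If there are distinct $u_1,u_2\in N(v^*)\cap N(x)$, then $u_1u_2\notin E$ by the second point, so $v^*\in\li{u_1u_2}$ via $[u_1v^*u_2]$ while $w^*\notin\li{u_1u_2}$, whence $\li{u_1u_2}=\li{xv}$, contradicting $v^*\notin\li{xv}$. Otherwise $N(v^*)\cap N(x)=\{u^*\}$; then by the third point every $u\in N(x)-u^*$ is non-adjacent to all of $N(x)$, so $N(x)$ is independent, and Lemma \ref{LemmaIndCom} gives $\li{uu'}\cap N(x)=\{u,u'\}$ for distinct $u,u'\in N(x)$; but $\li{uu'}$ avoids $w^*$ (non-edge, pendant $w^*$) and hence equals $\li{xv}\supseteq N(x)$, forcing $|N(x)|=2$, say $N(x)=\{u^*,u\}$.

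It remains to rule out this last configuration, which is the main obstacle and the only place the hypothesis $A_1(x)=\emptyset$ is used. From $d(u,v^*)=2$, together with $N(v^*)\subseteq\{u^*,v,w^*\}$ and the fact that $u$ is adjacent neither to $u^*$ (independence) nor to $w^*$ (pendant), the common neighbour of $u$ and $v^*$ must be $v$, so $uv,\,vv^*\in E$; moreover $d(u^*,v)\ne 2$ while $d(u^*,v)\le 2$ through $v^*$, so $u^*v\in E$. A short computation then shows the unique shortest $x$–$w^*$ path is $x\,u^*\,v^*\,w^*$, hence $\li{xw^*}=\{x,u^*,v^*,w^*\}$, and likewise $\li{w^*u^*}=\{x,u^*,v^*,w^*\}$ (indeed $u\notin\li{w^*u^*}$ since $u^*u\notin E$, and $v\notin\li{w^*u^*}$ since $vv^*\in E$). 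Thus $\li{w^*u^*}=\li{xw^*}$ with $u^*\in N(x)$, i.e.\ $w^*\in A_1(x)$, contradicting $A_1(x)=\emptyset$. This contradiction proves the claim.
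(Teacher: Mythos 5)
Your proof is correct, but it follows a genuinely different route from the paper's. The paper's argument is short and hinges on the bridge $w^*v^*$: since $w^*$ is pendant, $\li{w^*v^*}=V$ is universal, and a quick elimination shows $V$ must lie in $L_{A_2}$, forcing $N^2(w^*)\cap N(x)=\{u^*\}$ and $\li{w^*u^*}=V$; the contradiction then comes from $\li{xu}=\li{xv}$ for $u\in N(x)-u^*$ while $u^*\in\li{xv}\setminus\li{xu}$. You instead extract the key fact that $\li{xv}$ is the unique member of $\Li(G)$ avoiding $w^*$, so any line missing both $x$ and $w^*$ is immediately fatal; from this you derive several structural constraints ($d(u,v)\neq 2$ for $u\in N(x)$, every edge of $G[N(x)]$ meets $N(v^*)$ exactly once, etc.) and split on $|N(v^*)\cap N(x)|$, killing the first case directly and reducing the second to an explicit six-vertex configuration in which $\li{w^*u^*}=\li{xw^*}$, i.e.\ $w^*\in A_1(x)$. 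I checked the betweenness computations and they hold; the only implicit ingredients are $d(x)\ge 2$ (which is the standing hypothesis $R_1=\emptyset$ with $x\in R$) and $N(w^*)=\{v^*\}$ (established right after the claim $|N^3(x)|=1$). What the two approaches buy: the paper's use of the universal line is more economical and avoids any case split, while your argument is longer but makes visible exactly where the hypothesis $A_1(x)=\emptyset$ bites — your final configuration is a genuine diameter-three graph that is only excluded because it would place $w^*$ in $A_1(x)$, which explains why this claim sits inside Proposition \ref{PropR1A1empty} rather than holding unconditionally.
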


\begin{proof}
By contradiction, suppose that $|N^2(x)|=2$, and let $N^2(x)=\{v^*,v\}$. We have that $\ell(G)=n-1$ and $\Li(G)=\Li^x_2\cup  \Li^x_3\cup L_{A_2}\cup L_1^*\cup L_2^*$, where $L_2^*=\{\li{w^*v}\}$. We note that $V\notin L_2^*\cup L_1^*$, and since $|N^2(x)|=2$, $V\notin \Li^x_2$. Moreover, as $\Li^x_3=\{\li{xv^*}\}$, we also have that $V\notin  \Li^x_3$. Given that $|N^3(x)|=1$, the line $\li{w^*v^*}$ equals to $V$, which implies that $V\in L_{A_2}$. It follows that $N^2(w^*)\cap N(x)=\{u^*\}$, $\li{w^*u^*}=V$, $v\in N^3(w^*)\cap N(u^*)$ and $N(x)-u^*\subseteq N^3(w^*)\cap N(u^*)$. Let $u\in N(x)-u^*$, hence $u\in N^3(w^*)\cap N(u^*)$. We now consider the line $l=\li{xu}$ and observe that $w^*\notin l$, therefore it must hold that $l=\li{xv}\in \Li ^x_2$.  The vertex $u^*$ belongs to $\li{xv}\setminus \li{xu}$, leading to a contradiction.
\end{proof}

We now have that $N^2(x)=\{v^*\}$, therefore $\Li^x_2=\{\li{xv^*}\}=\{\li{xw^*}\}=\Li^x_3$ and $L_2^*=\emptyset$. The set $\Li^x_2\cup  \Li^x_3\cup L_{A_2}\cup L_1^*$ has $n-2$ lines, and all of them contain $w^*$. Consider the set of lines $L=\{\li{uu'}:u,u'\in N^2(w^*)\cap N(x)\}\cup \{\li{xu}:u\in N^3(w^*)\cap N(x)\}$. As no line in $L$ contains $w^*$, the set $L$ is disjoint from $\Li^x_2, \Li^x_3,L_{A_2}$ and $L_1^*$. Since $|N(x)|\geq 2$, we obtain that $|L|\geq 1$. It follows that $|L|=1$, $\ell(G)=n-1$ and $\Li(G)= \{\li{xw^*}\}\cup L_{A_2}\cup L_1^*\cup L$. Observe that the line $V=\li{w^*v^*}$ must belong to $\{\li{xw^*}\}\cup L_{A_2}$.

\begin{claim}
If $V=\li{xw^*}$, then $G$ is isomorphic to $G_a$ or $G_b$.
\end{claim}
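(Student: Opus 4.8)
The plan is to use the hypothesis $V=\li{xw^*}$ to determine the adjacency structure of $G$ near $v^*$, and then to exploit $|L|=1$ to force $|N(x)|=2$, at which point only two graphs remain.

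First I would observe that since $w^*$ has degree $1$ and $d(x,w^*)=3$ while $N^2(x)=\{v^*\}$, the unique neighbour of $w^*$ must lie in $N^2(x)$, so $N(w^*)=\{v^*\}$. By Lemma \ref{LemmaLineDiam}, a vertex $z\notin\{x,w^*\}$ belongs to $\li{xw^*}$ exactly when $[xzw^*]$; for $z\in N(x)$ this means $d(z,w^*)=2$, i.e.\ $z\in N(v^*)$, and $v^*$ itself always satisfies $[xv^*w^*]$. Hence $V=\li{xw^*}$ is equivalent to $N(x)\subseteq N(v^*)$, that is, $N^2(w^*)\cap N(x)=N(x)$ and $N^3(w^*)\cap N(x)=\emptyset$. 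In particular $L_1^*=\emptyset$ and $L=\{\li{uu'}:u,u'\in N(x),\ u\neq u'\}$; since $|L|=1$, all of these lines coincide with a common line $l_0$.

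Next I would show $|N(x)|=2$. For distinct $u,u'\in N(x)$ a direct check gives $x\in\li{uu'}$ if and only if $uu'\notin E$ (if $uu'\notin E$ then $d(u,u')=2$ and $[uxu']$ holds; if $uu'\in E$ then none of $[xuu']$, $[uxu']$, $[uu'x]$ can hold). Since all lines $\li{uu'}$ equal $l_0$, the graph $G[N(x)]$ cannot simultaneously contain an edge and a non-edge, so it is complete or independent. In either case Lemma \ref{LemmaIndCom} yields $\li{uu'}\cap N(x)=\{u,u'\}$, so distinct pairs give distinct lines and $|L|=\binom{|N(x)|}{2}$. As $|L|=1$ and $|N(x)|\geq 2$, this forces $|N(x)|=2$.

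Writing $N(x)=\{u_1,u_2\}$, the vertex set is $\{x,u_1,u_2,v^*,w^*\}$; the edges $xu_1,xu_2,v^*u_1,v^*u_2,v^*w^*$ are forced, the adjacencies $xv^*,xw^*,u_1w^*,u_2w^*$ are excluded, and only $u_1u_2$ is undetermined. Checking that the diameter is $3$ in both cases, $G\cong G_a$ if $u_1u_2\notin E$ and $G\cong G_b$ if $u_1u_2\in E$, which proves the claim. The only subtle point is the first step — correctly converting the universality of $\li{xw^*}$ into the condition $N(x)\subseteq N(v^*)$ and reading off its effect on $L_1^*$ and $L$; everything afterwards is a short, routine case analysis.
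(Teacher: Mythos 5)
Your proposal is correct and follows essentially the same route as the paper: translate $V=\li{xw^*}$ into $N(x)\subseteq N^2(w^*)$ so that $L$ reduces to $\{\li{uu'}:u,u'\in N(x)\}$, use $|L|=1$ to force $G[N(x)]$ to be complete or edgeless and hence $|N(x)|=2$, and then read off $G_a$ or $G_b$ according to whether the remaining pair is non-adjacent or adjacent. The extra detail you supply (the explicit equivalence via $N(w^*)=\{v^*\}$ and the count $|L|=\binom{|N(x)|}{2}$) only makes explicit what the paper leaves implicit.
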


\begin{proof}
Assume that $V=\li{xw^*}$. In this case $N(x)\subseteq N^2(w^*)$ and $L=\{\li{uu'}:u,u'\in  N(x)\}$. Given that $|L|=1$, let $L=\{l\}$. If $x\in l$, the graph $G[N(x)]$ has no edges, and if $x\notin l$, $G[N(x)]$ must be a complete graph. In both cases, we conclude that $|N(x)|\leq 2$, therefore $|N(x)|=2$. Let $N(x)=\{u,u^*\}$, hence $V=\{x,u,u^*,v^*,w^*\}$. If $uu^*\notin E$, then $G$ is isomorphic to $G_a$, and if $uu^*\in E$, then $G$ is isomorphic to $G_b$.   
\end{proof}

\begin{claim}
If $V\in L_{A_2}$, then $G$ is isomorphic to $G_c$.
\end{claim}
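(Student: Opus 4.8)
The plan is to leverage the description $\Li(G)=\{\li{xw^*}\}\cup L_{A_2}\cup L_1^*\cup L$ already obtained in this case (together with $N^2(x)=\{v^*\}$, $N^3(x)=\{w^*\}$, $N(w^*)=\{v^*\}$, and $d(x)\ge 2$), plus the hypothesis $V\in L_{A_2}$, to force $G$ to have exactly five vertices arranged as $G_c$.

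\textbf{Step 1 (pin down $u^*$, $v^*$ and the neighbourhood structure).} Since $L_{A_2}=\{\li{w^*u}:u\in N^2(w^*)\cap N(x)\}$, there is $u\in N^2(w^*)\cap N(x)$ with $\li{w^*u}=V$. By Lemma \ref{LemmaNi} we have $\li{w^*u}\cap N^2(w^*)=\{u\}$, so $N^2(w^*)\cap N(x)$ can contain no other vertex; hence $N^2(w^*)\cap N(x)=\{u\}$, and since $u^*$ was chosen in this set, $u=u^*$. Thus $\li{w^*u^*}=V$ and $L_{A_2}=\{V\}$. From $\li{w^*u^*}=V$ I would read off: $v^*u^*\in E$ (already forced by $d(w^*,u^*)=2$ and $N(w^*)=\{v^*\}$); $N(v^*)=\{w^*,u^*\}$ (using $N^2(x)=\{v^*\}$, $N^3(x)=\{w^*\}$, every neighbour of $v^*$ lying in $N(x)$ is at distance $2$ from $w^*$, hence in $N^2(w^*)\cap N(x)=\{u^*\}$); and $u^*u\in E$ for every $u\in N^3(w^*)\cap N(x)$ (such a vertex can lie in $\li{w^*u^*}=V$ only through the relation $[w^*u^*u]$). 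Also $N(x)=\{u^*\}\cup(N^3(w^*)\cap N(x))$ and $|N^3(w^*)\cap N(x)|=|N(x)|-1\ge 1$, as $d(x)\ge 2$.

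\textbf{Step 2 (the crux: $N^3(w^*)\cap N(x)$ is a singleton).} This is the step I expect to be the main obstacle. Suppose $u_1,u_2$ are distinct vertices of $N^3(w^*)\cap N(x)$; then $xu_1,xu_2,u^*u_1,u^*u_2\in E$. Consider $l=\li{u_1u_2}$. Since $d(w^*,u_1)=d(w^*,u_2)=3$, Lemma \ref{Lemmadxdy} gives $w^*\notin l$, so $l$ is none of $\li{xw^*}$, $V$, or a line of $L_1^*$, all of which contain $w^*$. Hence $l\in L$, i.e. $l=\li{xu}$ for some $u\in N^3(w^*)\cap N(x)$, and in particular $x\in l$. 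If $u_1u_2\notin E$, then $d(u_1,u_2)=2$ and $[u_1u^*u_2]$ holds, so $u^*\in l$; but $\{x,u,u^*\}$ is complete, whence $u^*\notin\li{xu}$ by Lemma \ref{LemmaIndCom}, a contradiction. If $u_1u_2\in E$, then $\{x,u_1,u_2\}$ is complete, so $x\notin\li{u_1u_2}=l$ by Lemma \ref{LemmaIndCom}, contradicting $x\in l$. Therefore $N^3(w^*)\cap N(x)=\{u\}$ for a single vertex $u$.

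\textbf{Step 3 (identify $G$).} Now $V=\{x,u,u^*,v^*,w^*\}$ has five vertices and all adjacencies are determined: $xu,xu^*,u^*u,u^*v^*,v^*w^*\in E$, while $xv^*,xw^*\notin E$ (distances $2$ and $3$), $uv^*\notin E$ (as $N(v^*)=\{w^*,u^*\}$), $uw^*\notin E$ (as $u\in N^3(w^*)$), and $u^*w^*\notin E$ (as $d(u^*,w^*)=2$). This is exactly the triangle $\{x,u,u^*\}$ with the pendant path $u^*-v^*-w^*$, i.e. the graph $G_c$, and so $G\cong G_c$, as claimed.
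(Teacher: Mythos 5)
Your proposal is correct and follows essentially the same route as the paper: both pin down $N^2(w^*)\cap N(x)=\{u^*\}$ via Lemma \ref{LemmaNi}, deduce $N(x)-u^*\subseteq N(u^*)$ from $V=\li{w^*u^*}$, and then rule out two distinct vertices $u_1,u_2\in N(x)-u^*$ by showing $\li{u_1u_2}$ would have to lie in $L$ yet cannot (splitting on whether $u_1u_2\in E$). The only cosmetic difference is that the paper isolates ``no line in $L$ contains $u^*$'' as a separate observation, which you inline.
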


\begin{proof}
Assume that $V\in L_{A_2}$. In this case $N^2(w^*)\cap N(x)=\{u^*\}$, $V=\li{w^*u^*}$ and $L= \{\li{xu}:u\in N(x)-u^*\}$. Furthermore, $ N(x)-u^*\subseteq N(u^*)$, therefore no line in $L$ contains $u^*$. We will now prove that $|N(x)|=2$. By contradiction, suppose that $|N(x)|\geq 3$ and consider distinct $u,u'\in N(x)-u^*=N^3(w^*)\cap N(x)$. We consider the line $l=\li{uu'}$, and note that $w^*\notin l$, thus $l\in L$ and $x\in l$. If follows that $d(u,u')=2$ and $u^*\in l$, yielding a contradiction. We conclude that $|N(x)|=2$, and let $N(x)=\{u,u^*\}$, hence $V=\{x,u,u^*,v^*,w^*\}$ and $G$ is isomorphic to $G_c$. 
\end{proof}

This concludes the proof of Proposition \ref{PropR1A1empty}.

\section{Proof of the main theorem: Case 2}\label{sec:Rempty}
In this section we assume that $R$ is empty, that is, $\forall \,x\in V$, $\Li^x_2\cap \Li^x_3=\emptyset$. We shall prove the following Proposition.

\begin{prop}
$G$ is isomorphic to $M'_{6}$ or $M'_{8}$.
\label{PropRempty}
\end{prop}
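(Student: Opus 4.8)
The plan is to fix a vertex $x$ of eccentricity $3$ together with a vertex $w\in N^3(x)$, to exploit the symmetry between $x$ and $w$ --- both will turn out to have eccentricity $3$, with $N(x),N^2(x),N^3(x)$ corresponding to $N^2(w),N(w),\{x\}$ --- and to bootstrap the structure of $G$ from one counting principle applied repeatedly. The principle is this: for every vertex $y$, the lines in $\Li^y_2\cup\Li^y_3$ are $|N^2(y)|+|N^3(y)|$ pairwise distinct lines of $\Li^y$ (distinctness inside each family by Lemma \ref{LemmaNi}, and $\Li^y_2\cap\Li^y_3=\emptyset$ since $y\notin R$), so by the corollary in Section \ref{sec:Prelim} there can be at most $|N(y)|-1$ further lines through $y$, while $\ell(G)\le n-1$ caps the total. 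Thus any configuration creating a line outside the families already accounted for is forbidden. (One first disposes of the degenerate cases --- a vertex of degree $1$, or $|N^2(x)|\le 1$ --- directly.)

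First I would use this principle alternately at $x$ and at $w$ to fix the coarse shape of $G$. For $u\in N(x)$ one checks that $x\in\li{wu}$ iff $d(w,u)=2$, in which case $\li{wu}$ is a line through both $x$ and $w$, whereas if $d(w,u)=3$ then $\li{wu}$ is a diameter line missing $x$ (Lemma \ref{LemmaLineDiam}). Exhibiting surplus lines whenever $|N^3(x)|\ge 2$ --- one such line being $\li{ww'}$ for distinct $w,w'\in N^3(x)$, which misses $x$ by Lemma \ref{Lemmadxdy} --- I would deduce $N^3(x)=\{w\}$, and by symmetry $N^3(w)=\{x\}$. A further round shows that no neighbour of $x$ is at distance $3$ from $w$ and conversely; together with $N^3(w)=\{x\}$ this gives $N(w)=N^2(x)$ and $N^2(w)=N(x)$. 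Hence $V=\{x,w\}\cup N(x)\cup N^2(x)$, with $x$ adjacent to none of $N^2(x)\cup\{w\}$ and $w$ adjacent to none of $N(x)\cup\{x\}$.

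Next I would sharpen this to the exact local structure. A non-edge $uu'$ inside $N(x)$ produces a line $\li{uu'}$ missing both $x$ and $w$ (Lemmas \ref{Lemmadxdy} and \ref{LemmaIndCom}), which is one line too many; hence $G[\{x\}\cup N(x)]$ and, symmetrically, $G[\{w\}\cup N^2(x)]$ are complete graphs. Likewise, if some $u\in N(x)$ had two neighbours in $N^2(x)$, or some $v\in N^2(x)$ had two neighbours in $N(x)$, a surplus line appears (using Lemma \ref{LemmaNi} to keep the lines of $\Li^x_2$, $\Li^w_2$ and the $\li{wu}$ with $d(w,u)=2$ distinct); so the bipartite graph between $N(x)$ and $N^2(x)$ is a perfect matching, forcing $|N(x)|=|N^2(x)|$. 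Writing $p=|N(x)|+1$, this makes $G$ isomorphic to $M'_{2p}$, with $p\ge 3$ since $M'_4$ is the path of length $3$, which has $R\neq\emptyset$ and was handled in Section \ref{sec:Rnonempty}. Finally Proposition \ref{PropFam2} gives $\ell(M'_{2p})=\binom{p}{2}+1$, and $\binom{p}{2}+1\le n-1=2p-1$ forces $p\le 4$; therefore $G$ is isomorphic to $M'_6$ or $M'_8$.

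The step I expect to be the main obstacle is the chain of "surplus line" arguments in the two middle paragraphs. Unlike Case 1 there is no analogue of the bijection $f\colon A(x)\to B(x)$ to organise the repetitions among lines through $x$, and genuine coincidences such as $\li{xu}=\li{xv}$ with $u\in N(x)$, $v\in N^2(x)$, or $\li{wu}=\li{xv}$, really do occur in $M'_6$ and $M'_8$; so for each forbidden configuration one must produce a concrete line and verify, via Lemmas \ref{LemmaLineDiam}--\ref{LemmaIndCom}, that it belongs to none of the several line families counted so far. The conceptual crux --- and the point where "a symmetry found in the graph" does the real work --- is formalising the $x\leftrightarrow w$ interchange so that each structural fact proved from the side of $x$ transfers to the side of $w$, which keeps the casework from doubling.
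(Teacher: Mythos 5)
Your overall strategy (fix an antipodal pair $x,w$, count $\Li^x_2\cup\Li^x_3$ plus surplus families of lines, exploit the $x\leftrightarrow w$ symmetry, land on $M'_{2p}$ and finish with Proposition \ref{PropFam2}) is the paper's strategy, and your endgame is correct. But the order in which you propose to extract structure does not work, and two of your intermediate assertions are wrong as stated. You cannot deduce $N^3(x)=\{w\}$ in a first round: the family $\Li^x_2\cup\Li^x_3$ accounts for only $n-1-|N(x)|$ lines, so one surplus line such as $\li{ww'}$ (or even several) yields no contradiction with $\ell(G)\le n-1$; you would need $|N(x)|$ lines outside these families. The only source of that many extra lines is a family of the form $\{\li{v^*u}:u\in N(x)\}$ for a suitable $v^*\in N^2(x)$, and making that family large and pairwise distinct requires precisely the matching structure between $\{u\in N^2(w)\cap N(x):\li{wu}\in\Li^x_2\}$ and its image in $N^2(x)\cap N(w)$ that you defer to your last step. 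This is why the paper establishes the bijection $f$ and the matching (Claims \ref{Claimwuxv} and \ref{Claimfuncmatch}) \emph{before} it can say anything about $|N^3(x)|$; the pieces $C(w,x)$ and $D'$ of $N^3(x)\setminus\{w\}$ are only shown empty near the end. Your proposed ordering is therefore circular: the ``coarse shape'' needs the ``exact local structure'' first.

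Two concrete errors. (i) For a non-edge $uu'$ inside $N(x)$ the line $\li{uu'}$ does \emph{contain} $x$: since $uu'\notin E$ we have $d(u,u')=2=d(u,x)+d(x,u')$, i.e.\ $[uxu']$. Lemma \ref{Lemmadxdy} does not apply to $x$ here because $d(x,u)=1$, and Lemma \ref{LemmaIndCom} presupposes that $N(x)$ is complete or independent, which is what you are trying to prove. The completeness of $G[N(x)]$ must be obtained from $w$'s side, where $N(x)\subseteq N^2(w)$ and Lemma \ref{Lemmadxdy} genuinely excludes $w$ from the line; this is how the paper proceeds (it proves $G[B(w,x)]$ complete, with a second auxiliary line to reach a contradiction, and then swaps $x$ and $w$). (ii) Your analysis of $N(x)$ never addresses the vertices $u$ with $d(w,u)=2$ but $\li{wu}\notin\Li^x_2$ (the set $A(x,w)$ of the paper). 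For such $u$ the line $\li{wu}$ is itself an unaccounted line through $x$, which changes the budget, and the paper needs three separate claims (Claims \ref{ClaimAxwAwx}, \ref{ClaimAxwCxw} and the claim that $A(x,w)=\emptyset$) to eliminate them; without distinguishing these vertices your ``perfect matching between $N(x)$ and $N^2(x)$'' step has no proof. In short, the proposal correctly describes the destination but not a viable route; repairing the gaps essentially forces you back to the paper's sequence of claims.
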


\subsubsection*{Proof of Proposition \ref{PropRempty}}

We begin by defining sets that will be useful in describing the graph $G$. For $x,y\in V$ such that $d(x,y)=3$, we define $A(x,y)=\{z\in N^2(y)\cap N(x): \li{yz}\notin \Li^x_2\}$, $B(x,y)=\{z\in N^2(y)\cap N(x): \li{yz}\in \Li^x_2\}$ and $C(x,y)=N^3(y)\cap N(x)$. For this section, fix $x,w\in V$ such that $d(x,w)=3$.  We define  $D=N^2(x)\setminus N(w)$ and $D'=N^3(x)\setminus (N(w)\cup \{w\})$. Observe that $N(x)$ is the disjoint union of  $A(x,w),B(x,w)$ and $C(x,w)$, that $N^2(x)$ is the disjoint union of $A(w,x),B(w,x)$ and $D$, and that $N^3(x)$ is the disjoint union of $\{w\}, C(w,x)$ and $D'$. The diagram below illustrates the graph $G$.

\begin{figure}[H]
\begin{center}      
\scalebox{.75}{\tikzset{every picture/.style={line width=0.75pt}} 

\begin{tikzpicture}[x=0.75pt,y=0.75pt,yscale=-1,xscale=1]

\draw  [fill={rgb, 255:red, 0; green, 0; blue, 0 }  ,fill opacity=1 ] (220,55) .. controls (220,52.24) and (222.24,50) .. (225,50) .. controls (227.76,50) and (230,52.24) .. (230,55) .. controls (230,57.76) and (227.76,60) .. (225,60) .. controls (222.24,60) and (220,57.76) .. (220,55) -- cycle ;
\draw   (70,120) .. controls (70,103.43) and (139.4,90) .. (225,90) .. controls (310.6,90) and (380,103.43) .. (380,120) .. controls (380,136.57) and (310.6,150) .. (225,150) .. controls (139.4,150) and (70,136.57) .. (70,120) -- cycle ;
\draw    (170.23,91.65) .. controls (180.23,104.32) and (180.23,136.32) .. (170.23,148.32) ;
\draw    (279.68,91.65) .. controls (289.68,104.32) and (289.68,136.32) .. (279.68,148.32) ;
\draw   (70,210) .. controls (70,193.43) and (139.4,180) .. (225,180) .. controls (310.6,180) and (380,193.43) .. (380,210) .. controls (380,226.57) and (310.6,240) .. (225,240) .. controls (139.4,240) and (70,226.57) .. (70,210) -- cycle ;
\draw    (170.23,181.65) .. controls (180.23,194.32) and (180.23,226.32) .. (170.23,238.32) ;
\draw    (279.68,181.65) .. controls (289.68,194.32) and (289.68,226.32) .. (279.68,238.32) ;
\draw   (70,300) .. controls (70,283.43) and (139.4,270) .. (225,270) .. controls (310.6,270) and (380,283.43) .. (380,300) .. controls (380,316.57) and (310.6,330) .. (225,330) .. controls (139.4,330) and (70,316.57) .. (70,300) -- cycle ;
\draw    (170.23,271.65) .. controls (180.23,284.32) and (180.23,316.32) .. (170.23,328.32) ;
\draw    (279.68,271.65) .. controls (289.68,284.32) and (289.68,316.32) .. (279.68,328.32) ;
\draw  [fill={rgb, 255:red, 0; green, 0; blue, 0 }  ,fill opacity=1 ] (130.4,299.03) .. controls (130.4,296.27) and (132.64,294.03) .. (135.4,294.03) .. controls (138.16,294.03) and (140.4,296.27) .. (140.4,299.03) .. controls (140.4,301.79) and (138.16,304.03) .. (135.4,304.03) .. controls (132.64,304.03) and (130.4,301.79) .. (130.4,299.03) -- cycle ;

\draw (221,32.4) node [anchor=north west][inner sep=0.75pt]    {$x$};
\draw (7,110) node [anchor=north west][inner sep=0.75pt]    {$N( x)$};
\draw (6.8,194) node [anchor=north west][inner sep=0.75pt]    {$N^{2}( x)$};
\draw (6.4,274.4) node [anchor=north west][inner sep=0.75pt]    {$N^{3}( x)$};
\draw (111,111.4) node [anchor=north west][inner sep=0.75pt]    {$A( x,w)$};
\draw (111,201.4) node [anchor=north west][inner sep=0.75pt]    {$A( w,x)$};
\draw (110.6,299.23) node [anchor=north west][inner sep=0.75pt]    {$w$};
\draw (197,112.4) node [anchor=north west][inner sep=0.75pt]    {$B( x,w)$};
\draw (197,202.4) node [anchor=north west][inner sep=0.75pt]    {$B( w,x)$};
\draw (297,112.4) node [anchor=north west][inner sep=0.75pt]    {$C( x,w)$};
\draw (197.26,290.51) node [anchor=north west][inner sep=0.75pt]    {$C( w,x)$};
\draw (306,202.4) node [anchor=north west][inner sep=0.75pt]    {$D$};
\draw (306,292.4) node [anchor=north west][inner sep=0.75pt]    {$D'$};
\draw (420,90) node [anchor=north west][inner sep=0.75pt] {\large $ \begin{array}{l}
A(x,w) =\{u\in N^2(w) \cap N(x): \li{wu}\notin \Li^x_2\} \vspace{3pt} \\
B(x,w)=\{u\in N^2(w) \cap N(x): \li{wu}\in \Li^x_2\} \vspace{3pt} \\
C(x,w)=N^3(w)\cap N(x)\vspace{3pt} \\
A(w,x)=\{v\in N^2(x)\cap N(w): \li{xv}\notin \Li^w_2\}\vspace{3pt} \\
B(w,x)=\{v\in N^2(x)\cap N(w): \li{xv}\in \Li^w_2\}\vspace{3pt} \\
C(w,x)=N^3(x)\cap N(w)\vspace{3pt} \\
D=N^2(x)\setminus N(w)\vspace{3pt} \\
D'=N^3(x)\setminus \left ( N(w)\cup \{w\}\right)
\end{array}$};

\end{tikzpicture}}
\end{center}
\caption{Diagram of $G$.}
\label{fig:RemptyDiagram}
\end{figure}
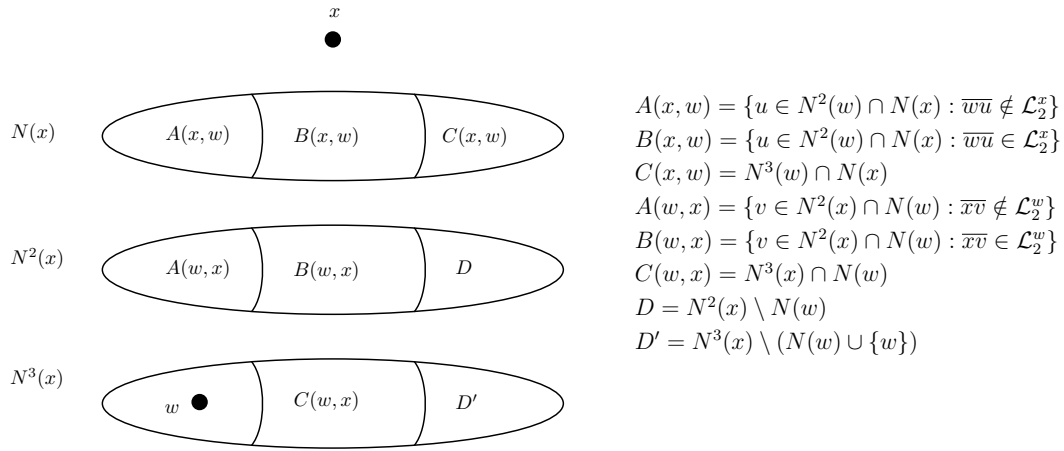

\begin{claim}
If $u\in B(x,w)$ and $v\in N^2(x)$ such that $\li{wu}=\li{xv}$. Then $v\in B(w,x)\cap N(u)$ and $\left ( A(x,w)\cup B(x,w)-u \right )\subseteq N^2(v)$.
\label{Claimwuxv}
\end{claim}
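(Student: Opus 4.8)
The plan is to prove the three assertions in turn, writing $\ell:=\li{wu}=\li{xv}$ and using freely that $\{w,u,x,v\}\subseteq\ell$ (since $w,u\in\li{wu}$ and $x,v\in\li{xv}$) and that $x,w,u,v$ are pairwise distinct (each pair has a known nonzero distance).

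\emph{First, $v\in B(w,x)$.} Since $w\in\ell=\li{xv}$ and $d(x,v)=2$, of the three betweenness relations $[wxv]$, $[xwv]$, $[xvw]$ only $[xvw]$ is compatible with diameter $3$ (the other two would force a distance at least $4$), so $[xvw]$ holds and $d(v,w)=1$. Thus $v\in N^2(x)\cap N(w)$, and since $\li{xv}=\li{wu}\in\Li^w_2$ (because $u\in N^2(w)$), the definition of $B(w,x)$ gives $v\in B(w,x)$.

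\emph{Second, $v\in N(u)$ — the step I expect to be the crux.} From $u\in\li{xv}$ and $d(x,v)=2$ one reads off that $[uxv]$ or $[xuv]$ holds (the relation $[xvu]$ would give $d(x,u)=2+d(v,u)\ge 3$), so $d(u,v)\in\{1,3\}$, and everything hinges on excluding $d(u,v)=3$. The idea: assume $d(u,v)=3$ and choose an internal vertex $y$ of a shortest $x$--$v$ path, so $d(x,y)=d(y,v)=1$; then $[xyv]$ gives $y\in\li{xv}=\ell$. On the other hand the triangle inequality squeezes $d(y,u)$ between $d(v,u)-d(v,y)=2$ and $d(y,x)+d(x,u)=2$, so $d(y,u)=2$, and similarly, using $d(v,w)=1$ from the first step, $d(y,w)$ is squeezed between $d(x,w)-d(x,y)=2$ and $d(y,v)+d(v,w)=2$, so $d(y,w)=2$. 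Hence $y$ is equidistant, at distance $2$, from $w$ and $u$, so Lemma~\ref{Lemmadxdy} gives $y\notin\li{wu}=\ell$, contradicting $y\in\ell$. Therefore $d(u,v)=1$, and combined with the first step $v\in B(w,x)\cap N(u)$.

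\emph{Third, $(A(x,w)\cup B(x,w))-u\subseteq N^2(v)$.} Note $A(x,w)\cup B(x,w)=N^2(w)\cap N(x)$, so fix $u'\in(N^2(w)\cap N(x))-u$; the goal is $d(u',v)=2$, and $1\le d(u',v)\le d(u',x)+d(x,v)=3$ is immediate. If $d(u',v)=3$ then $[u'xv]$ holds, so $u'\in\li{xv}=\li{wu}$; but $u'\in N^2(w)$ and $\li{wu}\cap N^2(w)=\{u\}$ by Lemma~\ref{LemmaNi}, forcing $u'=u$, a contradiction. If $d(u',v)=1$ then $[xu'v]$ holds (as $d(x,v)=2=d(x,u')+d(u',v)$), so again $u'\in\li{wu}$; but the three betweenness relations of $u'$ with $w$ and $u$ are all impossible ($[u'wu]$ gives $d(u',u)=4$, while $[wu'u]$ and $[wuu']$ each force $d(u',u)=0$), so $u'\notin\li{wu}$, a contradiction. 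Hence $d(u',v)=2$. Apart from the midpoint trick in the second step, the argument is just a direct application of the betweenness characterization together with Lemmas~\ref{LemmaLineDiam}, \ref{LemmaNi} and \ref{Lemmadxdy}.
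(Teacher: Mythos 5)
Your proof is correct. The overall decomposition into the three assertions is the same as the paper's, and your treatment of the last one — forcing $u'\in\li{xv}=\li{wu}$ when $d(u',v)\in\{1,3\}$ and contradicting $\li{wu}\cap N^2(w)=\{u\}$ — is exactly the paper's argument (your explicit case check of the three betweenness relations in the $d(u',v)=1$ subcase just re-derives that instance of Lemma \ref{LemmaNi}). Where you genuinely diverge is the step $v\in N(u)$: you first extract $d(v,w)=1$ from $w\in\li{xv}$, then exclude $d(u,v)=3$ by taking a midpoint $y$ of a shortest $x$--$v$ path, squeezing $d(y,u)=d(y,w)=2$ out of the triangle inequality, and playing $y\in\li{xv}$ against $y\notin\li{wu}$ via Lemma \ref{Lemmadxdy}. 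The paper instead gets $v\in N(w)\cap N(u)$ in one stroke: it picks any $v'\in N(w)\cap N(u)$ (which exists since $d(w,u)=2$), observes $v'\in N^2(x)$, and concludes $v'\in\li{wu}\cap N^2(x)=\li{xv}\cap N^2(x)=\{v\}$ by Lemma \ref{LemmaNi}, so $v'=v$. The paper's identification is shorter and leans on the same uniqueness lemma it uses everywhere else; your route is longer but requires no choice of a clever witness tied to $w$ and $u$, and it is the only one of the two that actually exercises Lemma \ref{Lemmadxdy}. Both are sound.
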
        

\begin{proof}
Let $u\in B(x,w)$ and $v\in N^2(x)$ such that $\li{wu}=\li{xv}$, in particular, $\li{xv}\in \Li^w_2$. Consider $v'\in N(w)\cap N(u)$, and observe that $v'\in N^2(x)$. Since $v'\in \li{wu}=\li{xv}$, we have that $v'=v$ and $v\in B(w,x)\cap N(u)$. To prove the second statement, consider $u'\in A(x,w)\cup B(x,w)-u$. If $d(v,u')\in \{1,3\}$, we obtain the contradiction $u'\in \li{xv}=\li{wu}$, concluding that $u'\in N^2(v)$.  
\end{proof}

\begin{claim}
The function  $f: B(x,w)\to B(w,x)$ given by $f(u)=v\in N^2(x)$ such that $\li{wu}=\li{xv}$ is well defined and bijective. Moreover, $E(B(x,w),A(w,x)\cup B(w,x))=E(B(w,x),A(x,w)\cup B(x,w))=\{uf(u): u \in B(x,w)\}$.
\label{Claimfuncmatch}
\end{claim}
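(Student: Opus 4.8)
The plan is to verify, in turn, that $f$ is well defined, that it is a bijection, and that the two edge sets have the claimed form; each step reduces to Claim \ref{Claimwuxv}, Lemma \ref{LemmaNi}, and the symmetry of the whole configuration under interchanging $x$ and $w$. This symmetry is legitimate because $d(x,w)=d(w,x)=3$ and the sets $A(w,x),B(w,x),C(w,x)$ play exactly the role symmetric to $A(x,w),B(x,w),C(x,w)$; consequently the statement of Claim \ref{Claimwuxv} remains valid with the roles of $x$ and $w$ exchanged, and I will use this interchanged form freely.

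For well-definedness: if $u\in B(x,w)$ then $u\in N^2(w)\cap N(x)$ and $\li{wu}\in\Li^x_2$, so there is $v\in N^2(x)$ with $\li{wu}=\li{xv}$; by Lemma \ref{LemmaNi} this $v$ is unique, and by Claim \ref{Claimwuxv} it lies in $B(w,x)$, so $f\colon B(x,w)\to B(w,x)$ is well defined. Injectivity: if $f(u)=f(u')=v$ then $\li{wu}=\li{xv}=\li{wu'}$, and Lemma \ref{LemmaNi} gives $u=u'$. Surjectivity: given $v\in B(w,x)$ we have $\li{xv}\in\Li^w_2$, hence $\li{xv}=\li{wu}$ for some $u\in N^2(w)$; the interchanged form of Claim \ref{Claimwuxv} yields $u\in B(x,w)$, and then $f(u)=v$ by the uniqueness already established. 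Thus $f$ is a bijection.

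It remains to describe the edges. One inclusion is immediate: for $u\in B(x,w)$ we have $f(u)\in B(w,x)\subseteq A(w,x)\cup B(w,x)$, and $uf(u)\in E$ since Claim \ref{Claimwuxv} places $f(u)$ in $N(u)$; hence $\{uf(u):u\in B(x,w)\}$ is contained both in $E(B(x,w),A(w,x)\cup B(w,x))$ and in $E(B(w,x),A(x,w)\cup B(x,w))$. For the reverse, suppose $uv\in E$ with $u\in B(x,w)$ and $v\in A(w,x)\cup B(w,x)$. Then $d(w,u)=2$ while $v\in N(w)\cap N(u)$, so $d(w,v)=d(v,u)=1$ (and $w,v,u$ are pairwise distinct); therefore $[wvu]$ holds and $v\in\li{wu}=\li{xf(u)}$. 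Since $v\in N^2(x)$, Lemma \ref{LemmaNi} forces $v=f(u)$, so $uv=uf(u)$. This gives $E(B(x,w),A(w,x)\cup B(w,x))\subseteq\{uf(u):u\in B(x,w)\}$, and the argument with $x$ and $w$ interchanged gives $E(B(w,x),A(x,w)\cup B(x,w))\subseteq\{f^{-1}(v)\,v:v\in B(w,x)\}=\{uf(u):u\in B(x,w)\}$. Combining the inclusions yields the two asserted equalities.

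I do not expect a serious obstacle; the argument is routine once Claim \ref{Claimwuxv} is available. The only mildly delicate point is confirming that the statement and proof of Claim \ref{Claimwuxv} are genuinely invariant under swapping $x$ and $w$ — this is what powers the surjectivity step and the second edge-set inclusion — but since the proof of that claim used only generic betweenness facts together with $d(x,w)=3$, the symmetry is clear.
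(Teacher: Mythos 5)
Your proof is correct and takes essentially the same route as the paper's: well-definedness via Claim \ref{Claimwuxv} plus the uniqueness from Lemma \ref{LemmaNi}, surjectivity and the second edge-set equality via the $x\leftrightarrow w$ symmetry, and the edge description reducing to the fact that a vertex of $B(w,x)$ has exactly one neighbour in $A(x,w)\cup B(x,w)$. The only cosmetic differences are that the paper proves surjectivity by a short direct computation (picking $u\in N(v)\cap N(x)$) rather than by invoking the interchanged Claim \ref{Claimwuxv}, and reads the edge equalities off the conclusion of Claim \ref{Claimwuxv} instead of re-deriving the betweenness computation.
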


\begin{proof}
Let $u\in B(x,w)$, by definition, there exists $ v\in N^2 (x)$ such that $\li{wu}=\li{xv}$. It is immediate that this $v$ is unique, and by Claim \ref{Claimwuxv}, the vertex $v$ belongs to $B(w,x)$, hence $f$ is well defined. We now show that $f$ is surjective. Consider $v\in B(w,x)$ and $u\in N(v)\cap N(x)$. As $u\in \li{xv}\in \Li^w_2$, we have that $\li{wu}=\li{xv}\in \Li^x_2$, therefore $u\in  B(x,w)$ and $v=f(u)$. The injectivity of $f$ follows directly from the fact that $\li{wu}\neq \li{wu'}$ for distinct $u,u'\in B(x,w)$. We now prove the second statement. By Claim \ref{Claimwuxv}, $E(B(w,x),A(x,w)\cup B(x,w))=\{uf(u): u \in B(x,w)\}$. In particular, $E(A(x,w),B(w,x))=\emptyset$, hence reversing the roles of $x$ and $w$ we obtain that $E(A(w,x),B(x,w))=\emptyset$, concluding that $E(B(x,w),A(w,x)\cup B(w,x))=\{uf(u): u \in B(x,w)\}$.
\end{proof}

The previous two claims imply that for $v\in B(w,x)$, we have that $A(x,w)\subseteq N^2(v)$. Thus, for $u\in A(x,w)$, it holds that $B(w,x)\subseteq N^2(u)$. Reversing the roles of $x$ and $w$, we conclude that for $v\in A(w,x)$, the inclusion $B(x,w)\subseteq N^2(v)$ holds. 

\begin{claim}
$E(B(x,w),C(x,w))=E(B(w,x),C(w,x))=E(B(w,x),C(x,w))=E(B(w,x),D')=\emptyset$.
\end{claim}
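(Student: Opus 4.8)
The plan is to prove the four equalities by combining elementary distance constraints with the line identity $\overline{wu}=\overline{xf(u)}$ supplied by Claim~\ref{Claimfuncmatch}, while exploiting the symmetry of the whole configuration (the data is merely a pair $x,w$ with $d(x,w)=3$, and the sets $A(\cdot,\cdot),B(\cdot,\cdot),C(\cdot,\cdot)$ together with the map $f$ all behave symmetrically under exchanging $x$ and $w$). One equality is immediate: every vertex of $B(w,x)$ lies in $N(w)$ and every vertex of $C(x,w)$ lies in $N^3(w)$, so no vertex of $B(w,x)$ can be adjacent to a vertex of $C(x,w)$; hence $E(B(w,x),C(x,w))=\emptyset$.

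Next I would establish $E(B(x,w),C(x,w))=\emptyset$; the equality $E(B(w,x),C(w,x))=\emptyset$ then follows by exchanging the roles of $x$ and $w$. Suppose for contradiction that $uu'\in E$ with $u\in B(x,w)$ and $u'\in C(x,w)$, and set $v=f(u)$, so that by Claims~\ref{Claimwuxv} and~\ref{Claimfuncmatch} we have $v\in B(w,x)\cap N(u)$ and $\overline{wu}=\overline{xv}$. Since $d(w,v)=1$ (as $v\in N(w)$) and $d(w,u')=3$ (as $u'\in N^3(w)$), the triangle inequality gives $d(u',v)\ge d(w,u')-d(w,v)=2$, while $d(u',v)\le d(u',u)+d(u,v)=2$; thus $d(u',v)=2$. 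On the other hand $d(w,u')=3=d(w,u)+d(u,u')$, that is $[wuu']$ holds, so $u'\in\overline{wu}=\overline{xv}$. But $u'\in N(x)$, $v\in N^2(x)$ and $d(u',v)=2$, so $u'\notin\{x,v\}$ and none of $[u'xv]$, $[xu'v]$, $[xvu']$ can hold (they would force $d(u',v)=3$, $d(x,v)=3$ and $d(x,u')=4$ respectively); hence $u'\notin\overline{xv}$, a contradiction.

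Finally, for $E(B(w,x),D')=\emptyset$, suppose $vw'\in E$ with $v\in B(w,x)$ and $w'\in D'$. Since $w'\in N^3(x)$ and $v\in N^2(x)$, we get $d(x,w')=3=d(x,v)+d(v,w')$, that is $[xvw']$, so $w'\in\overline{xv}$. Putting $u=f^{-1}(v)\in B(x,w)\subseteq N(x)\cap N^2(w)$, Claim~\ref{Claimfuncmatch} gives $\overline{xv}=\overline{wu}$, hence $w'\in\overline{wu}$. Now $d(w,w')\le d(w,v)+d(v,w')=2$, and $d(w,w')\ge 2$ because $w'\notin N(w)\cup\{w\}$, so $w'\in N^2(w)$; since also $u\in N^2(w)$ and $u\ne w'$ (indeed $d(x,u)=1\ne 3=d(x,w')$), Lemma~\ref{LemmaNi} yields $w'\notin\overline{wu}=\overline{xv}$, a contradiction.

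The step I expect to be the real obstacle is $E(B(x,w),C(x,w))=\emptyset$: there both classes of endpoints lie inside $N(x)$, so there is no distance obstruction, and one genuinely has to push the betweenness relation $[wuu']$ through the line identity $\overline{wu}=\overline{xf(u)}$, extract $d(u',f(u))=2$, and then contradict membership of $u'$ in $\overline{xf(u)}$ by inspecting the three possible betweenness relations. The remaining cases reduce either to a one-line distance argument or to a short variant of this idea (using Lemma~\ref{LemmaNi} in place of the direct betweenness check).
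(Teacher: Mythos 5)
Your proposal is correct and follows essentially the same route as the paper: the same reduction to the four equalities (one immediate by distance, one by symmetry), the same use of $v=f(u)$ and the identity $\li{wu}=\li{xv}$ to place the offending vertex in one line but not the other, and the same distance computations. The only difference is that you spell out the betweenness checks that the paper leaves implicit.
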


\begin{proof}
It is immediate that $E(B(w,x),C(x,w))=\emptyset$. We first show that $E(B(x,w),C(x,w))=\emptyset$ by contradiction. If $uu'\in E(B(x,w),C(x,w))$, considering $v=f(u)$, we have that $d(v,u')=2$, therefore $u'\in \li{wu}\setminus \li{xv}$, which is a contradiction. Reversing the roles of $x$ and $w$, we conclude that $E(B(w,x),C(w,x))=\emptyset$. For the last equality, if $vw'\in E(B(w,x),D')$, consider $u=f^{-1}(v)$, and observe that $d(w',u)=2$, hence $w'\in \li{xv}\setminus \li{wu}$, leading to a contradiction.
\end{proof}

We will gradually prove that $A(x,w)=A(w,x)=C(x,w)=C(w,x)=D=D'=\emptyset$. Consider the sets of lines $\Li^x_2, \Li^x_3, L_A=\{\li{wu}:u\in A(x,w)\},L_C=\{\li{wu}:u\in C(x,w)\},L_D=\{\li{wv}:v\in D\}$ and $L_{D'}=\{\li{ww'}:w'\in D'\}$. We observe that all the lines in $\Li^x_2\cup \Li^x_3 \cup L_A$ contain $x$, and that no line in  $L_C\cup L_D\cup L_{D'}$ contains $x$.

\begin{claim}
The sets $\Li^x_2, \Li^x_3, L_A,L_C,L_D$ and $L_{D'}$ are pairwise disjoint.
\label{ClaimDisjoint1}
\end{claim}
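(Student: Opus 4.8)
The plan is to sort the $\binom{6}{2}=15$ pairs according to the observation recorded just before the claim: every line in $\Li^x_2\cup\Li^x_3\cup L_A$ passes through $x$, whereas no line in $L_C\cup L_D\cup L_{D'}$ does. This instantly disposes of the nine ``mixed'' pairs (one family from each group), so only the three pairs inside $\{\Li^x_2,\Li^x_3,L_A\}$ and the three pairs inside $\{L_C,L_D,L_{D'}\}$ remain.

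For the first group: $\Li^x_2\cap\Li^x_3=\emptyset$ is precisely the standing hypothesis that $x\notin R$. The equality $L_A\cap\Li^x_2=\emptyset$ is immediate, since $u\in A(x,w)$ means by definition that $\li{wu}\notin\Li^x_2$. For $L_A\cap\Li^x_3$ I would argue by contradiction: if $\li{wu}=\li{xw'}$ for some $u\in A(x,w)$ and $w'\in N^3(x)$, then $w\in\li{wu}=\li{xw'}$ together with $w\in N^3(x)$ forces $w'=w$ by Lemma \ref{LemmaNi}, so $\li{wu}=\li{xw}$; but $u\in N^2(w)$ puts $\li{wu}$ in $\Li^w_2$, while $\li{xw}=\li{wx}$ lies in $\Li^w_3$ because $d(w,x)=3$, contradicting $w\notin R$.

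For the second group I would use one uniform argument, exploiting that $L_C$, $L_D$, $L_{D'}$ all consist of lines of the form $\li{w\cdot}$ whose second endpoint ranges over $N(x)$, $N^2(x)$, $N^3(x)$ respectively; since these three shells around $x$ are pairwise disjoint, two lines coming from different families among $L_C,L_D,L_{D'}$ always have distinct second endpoints. It then suffices to note: if $a\neq b$ and $d(w,a),d(w,b)\in\{2,3\}$, then $\li{wa}\neq\li{wb}$. Indeed, if $d(w,a)=d(w,b)=:i$ then $a,b$ are distinct vertices of $N^i(w)$, so Lemma \ref{LemmaNi} gives $\li{wa}\neq\li{wb}$; and if, say, $d(w,a)=2$ and $d(w,b)=3$, then $\li{wa}\in\Li^w_2$ and $\li{wb}\in\Li^w_3$ are disjoint since $w\notin R$. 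Here one uses that the relevant endpoints are not adjacent to $w$ and distinct from $w$: for $u\in C(x,w)$ one has $d(w,u)=3$; for $v\in D$, $v\notin N(w)\cup\{w\}$ so $d(w,v)\in\{2,3\}$; and for $w'\in D'$, likewise $d(w,w')\in\{2,3\}$. Applying this to $(L_C,L_D)$, $(L_C,L_{D'})$ and $(L_D,L_{D'})$ settles all three at once.

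I do not anticipate a genuine obstacle: the argument is a short case split resting entirely on Lemma \ref{LemmaNi} and the hypothesis $R=\emptyset$. The only point requiring a little care is verifying the two ``observation'' statements about membership of $x$ in these lines; these reduce to one-line distance computations using $d(x,w)=3$, $d(x,u)=1$ for $u\in N(x)$, $d(x,v)=2$ for $v\in N^2(x)$, $d(x,w')=3$ for $w'\in N^3(x)$, together with Lemma \ref{LemmaLineDiam} for the diameter lines $\li{wu}$ with $u\in C(x,w)$ and Lemma \ref{Lemmadxdy} for $\li{ww'}$ with $w'\in D'$.
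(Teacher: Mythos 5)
Your proposal is correct and follows essentially the same route as the paper: the nine mixed pairs are handled by membership of $x$, the pairs $\Li^x_2\cap\Li^x_3$ and $\Li^x_2\cap L_A$ by hypothesis and definition respectively, $\Li^x_3\cap L_A$ by the same contradiction showing $\li{wu}=\li{wx}\in\Li^w_2\cap\Li^w_3$, and the pairs among $L_C,L_D,L_{D'}$ by viewing them inside $\Li^w_2\cup\Li^w_3$. You merely spell out a few steps (the identification $w'=w$ via Lemma \ref{LemmaNi}, and the endpoint/distance case split for $L_C,L_D,L_{D'}$) that the paper leaves implicit.
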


\begin{proof}
By hypothesis, $\Li^x_2$ is disjoint from $\Li^x_3$, and by definition of $A(x,w)$, $\Li^x_2$ is disjoint from $L_A$. As no line in $L_C\cup L_D\cup L_{D'}$ contains $x$, the sets $\Li^x_2,\Li^x_3$ and $L_A$ are disjoint from $L_C,L_D$ and $L_{D'}$. Since $\Li^w_2\cap \Li^w_3=\emptyset$ and $L_C\cup L_D\cup L_{D'}\subseteq \Li^w_2\cup \Li^w_3 $, the sets $L_C,L_D$ and $L_{D'}$ are pairwise disjoint. It remains to show that $\Li^x_3\cap L_A=\emptyset$. By contradiction, suppose that $\li{wu}\in L_A\cap \Li^x_3$, thus $\li{wu}=\li{xw}$. We have that the line $\li{wu}=\li{wx}$ belongs to $\Li^w_2\cap \Li^w_3=\emptyset$, a contradiction. 
\end{proof}

The set $\Li^x_2\cup  \Li^x_3\cup  L_A\cup L_C\cup L_D\cup L_{D'}$ contains
$$
|N^2(x)|+ |N^3(x)|+ |A(x,w)|+ |C(x,w)|+ |D|+ |D'|
$$
lines.

\begin{claim}
$B(x,w)\neq \emptyset$.
\end{claim}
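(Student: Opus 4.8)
The plan is to argue by contradiction: assume $B(x,w)=\emptyset$ and produce a line of $G$ that contains neither $x$ nor $w$, while also showing that under this hypothesis every line of $G$ passes through $x$ or $w$.

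First I would extract a rigid structure from the displayed count. If $B(x,w)=\emptyset$, then $N(x)=A(x,w)\sqcup C(x,w)$, so the set $\Li^x_2\cup\Li^x_3\cup L_A\cup L_C\cup L_D\cup L_{D'}$ consists of $|N^2(x)|+|N^3(x)|+|N(x)|+|D|+|D'|=(n-1)+|D|+|D'|$ distinct lines. Since $\ell(G)\le n-1$, this forces $D=D'=\emptyset$, $\ell(G)=n-1$, and $\Li(G)=\Li^x_2\sqcup\Li^x_3\sqcup L_A\sqcup L_C$. In particular $N^2(x)=A(w,x)\subseteq N(w)$ and $N^3(x)\setminus\{w\}=C(w,x)\subseteq N(w)$; moreover Claim~\ref{Claimfuncmatch} gives $|B(w,x)|=|B(x,w)|=0$, so the same count with the roles of $x$ and $w$ exchanged yields $N^2(w)\subseteq N(x)$ and $N^3(w)\setminus\{x\}\subseteq N(x)$. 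Since every line of $\Li^x_2\cup\Li^x_3$ contains $x$ and every line of $L_A\cup L_C$ contains $w$, every line of $G$ indeed passes through $x$ or $w$.

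Next I would show $C(x,w)\neq\emptyset$. As no line of $L_C$ contains $x$ while every line of $L_A$ does, the inclusion $\Li^x\subseteq\Li(G)$ together with the disjointness of Claim~\ref{ClaimDisjoint1} gives $\Li^x=\Li^x_2\sqcup\Li^x_3\sqcup L_A$, hence $|\Li^x|=|N^2(x)|+|N^3(x)|+|A(x,w)|$. Since $\ell(G)=n-1<n$ and $n\ge 3$, the corollary of Section~\ref{sec:Prelim} gives $|\Li^x|<n-1=|N(x)|+|N^2(x)|+|N^3(x)|$, so $|A(x,w)|<|N(x)|=|A(x,w)|+|C(x,w)|$, i.e.\ $C(x,w)\neq\emptyset$; by the same reasoning applied to $w$, also $C(w,x)\neq\emptyset$. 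Now I would choose $q\in C(x,w)=N^3(w)\cap N(x)$ and $q'\in C(w,x)=N^3(x)\cap N(w)$. Then $d(q,q')\neq1$ (else $d(x,q')\le2$), and a common neighbour $z$ of $q$ and $q'$ would satisfy $d(x,z)=2$, hence $z\in N^2(x)\subseteq N(w)$ and $d(w,q)\le2$, which is impossible; therefore $d(q,q')=3$. By Lemma~\ref{LemmaLineDiam}, $\li{qq'}=\{q,q'\}\cup\{z:[qzq']\}$, and neither $[qxq']$ nor $[qwq']$ holds since each would force $d(q,q')=4$. Thus $\li{qq'}$ contains neither $x$ nor $w$, contradicting the previous paragraph; hence $B(x,w)\neq\emptyset$.

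The step I expect to be the main obstacle is the first one: the displayed count alone only yields $\ell(G)\ge n-1$, so no contradiction is immediate. The work lies in exploiting the equality case to pin down $D=D'=\emptyset$, the inclusions $N^2(x)\subseteq N(w)$ and their symmetric companions, the fact that every line meets $\{x,w\}$, and $C(x,w),C(w,x)\neq\emptyset$ (via the corollary on $|\Li^x|$) — the facts that together make the line $\li{qq'}$ available.
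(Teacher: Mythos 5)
Your strategy is sound and ends with a genuinely different contradiction from the paper's, but there is a gap in the step where you establish $C(x,w)\neq\emptyset$. From $\Li^x\subseteq\Li(G)=\Li^x_2\cup\Li^x_3\cup L_A\cup L_C$ and the fact that every line of $\Li^x$ contains $x$ while no line of $L_C$ does, you only get the inclusion $\Li^x\subseteq\Li^x_2\cup\Li^x_3\cup L_A$, not the equality you assert: a line $\li{wu}\in L_A$ does contain $x$ (because $[wux]$), but nothing forces it to be of the form $\li{xy}$, so $L_A\subseteq\Li^x$ is unjustified. Without that reverse inclusion you only have the \emph{upper} bound $|\Li^x|\le |N^2(x)|+|N^3(x)|+|A(x,w)|$, and combining it with the other upper bound $|\Li^x|<n-1$ from the Corollary yields nothing; in particular $|A(x,w)|<|N(x)|$ does not follow. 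The conclusion is nevertheless true and easy to recover: if $C(x,w)=\emptyset$ then $L_C=\emptyset$ and every line of $\Li(G)=\Li^x_2\cup\Li^x_3\cup L_A$ contains $x$; since $\Li^x_2$ and $\Li^x_3$ are nonempty and disjoint, $|\Li^x|\ge 2$, so exactly as in the proof of the Proposition of Section~\ref{sec:Prelim} some $\li{xy}\ne V$, and any $z\notin\li{xy}$ gives a line $\li{yz}$ avoiding $x$ --- a contradiction. The symmetric argument gives $C(w,x)\neq\emptyset$. With this patch the remainder of your argument is correct: $d(q,q')=3$ follows as you say (using $D=\emptyset$), Lemma~\ref{LemmaLineDiam} shows $x,w\notin\li{qq'}$, and this contradicts the fact that every line of $\Li(G)$ meets $\{x,w\}$.

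For comparison, the paper reaches its contradiction without ever needing $C(x,w)\neq\emptyset$ or the symmetric count at $w$: after the same counting step it splits on $|N^2(x)|$, exhibiting a line $\li{vv'}$ with $v,v'\in N^2(x)$ outside $\Li(G)$ when $|N^2(x)|\ge 2$, and a line $\li{ww'}$ with $w'\in N^3(x)-w$ when $N^2(x)=\{v\}$. Your route, once repaired, is more symmetric in $x$ and $w$ and avoids the case analysis, at the cost of the extra nonemptiness lemma that is precisely where your gap sits.
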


\begin{proof}
For the sake of contradiction, suppose that $B(x,w)=\emptyset$. It follows that $A(x,w)\cup C(x,w)=N(x)$, hence $\ell(G)=n-1$, $\Li(G)=\Li^x_2\cup  \Li^x_3\cup  L_A\cup L_C$ and $D=D'=\emptyset$. If $|N^2(x)|\geq 2$, consider a line $l=\li{vv'}$ with distinct $v,v'\in N^2(x)$. Since $D=\emptyset$, the vertices $v$ and $v'$ must belong to $N(w)$. If $vv'\notin E$, we have that $l\cap C(x,w)=\emptyset$, and if $vv'\in E$, we have that $w\notin l$. In both cases we conclude that $l\notin L_C$. As $x\notin l$, we obtain that $l\notin \Li(G)$, which is a contradiction. Therefore, we conclude that $|N^2(x)|=1$ and let $N^2(x)=\{v\}$. Observe that $v\in N(w)$, $A(x,w)\subseteq N(v)$ and  $C(x,w)\subseteq N^2(v)$, hence $\li{xv}\cap \left ( \{x\}\cup N(x)\cup N^2(x)\cup \{w\}\right )=\{x,v,w\}\cup A(x,w)=\li{xw}\cap \left ( \{x\}\cup N(x)\cup N^2(x)\cup \{w\}\right )$. Given that $\li{xv}\neq \li{xw}$, there must exist a vertex $w'$ in $N^3(x)-w$ such that $w'\in \li{xv}\setminus \li{xw}$, and necessarily $w'v\in E$. We now consider the line $l=\li{ww'}$. As $x\notin l$, the line $l$ does not belong to $ \Li^x_2\cup \Li^x_3\cup L_A$. Since $N^2(x)=\{v\}$, we have that $C(x,w)\subseteq N^3(w')$, therefore $l\cap C(x,w)=\emptyset$ and $l\notin L_C$. If follows that $l\notin \Li(G)$, yielding a contradiction.
\end{proof}

Fix $u^*\in B(x,w)$ and $v^*=f(u^*)$, thus, $v^*\in B(w,x)$ and $\li{wu^*}=\li{xv^*}$. $G$ is depicted in the following diagram.

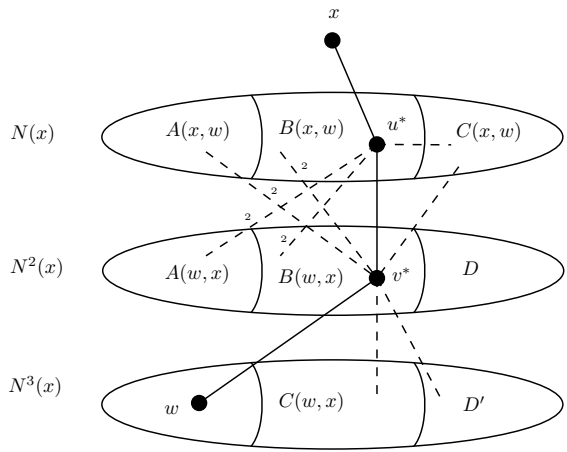
\begin{figure}[H]
\begin{center}      
\scalebox{.75}{\tikzset{every picture/.style={line width=0.75pt}} 

\begin{tikzpicture}[x=0.75pt,y=0.75pt,yscale=-1,xscale=1]

\draw  [fill={rgb, 255:red, 0; green, 0; blue, 0 }  ,fill opacity=1 ] (310,25) .. controls (310,22.24) and (312.24,20) .. (315,20) .. controls (317.76,20) and (320,22.24) .. (320,25) .. controls (320,27.76) and (317.76,30) .. (315,30) .. controls (312.24,30) and (310,27.76) .. (310,25) -- cycle ;
\draw   (160,90) .. controls (160,73.43) and (229.4,60) .. (315,60) .. controls (400.6,60) and (470,73.43) .. (470,90) .. controls (470,106.57) and (400.6,120) .. (315,120) .. controls (229.4,120) and (160,106.57) .. (160,90) -- cycle ;
\draw    (260.23,61.65) .. controls (270.23,74.32) and (270.23,106.32) .. (260.23,118.32) ;
\draw    (369.68,61.65) .. controls (379.68,74.32) and (379.68,106.32) .. (369.68,118.32) ;
\draw   (160.57,180) .. controls (160.57,163.43) and (229.97,150) .. (315.57,150) .. controls (401.18,150) and (470.57,163.43) .. (470.57,180) .. controls (470.57,196.57) and (401.18,210) .. (315.57,210) .. controls (229.97,210) and (160.57,196.57) .. (160.57,180) -- cycle ;
\draw    (260.23,151.65) .. controls (270.23,164.32) and (270.23,196.32) .. (260.23,208.32) ;
\draw    (369.68,151.65) .. controls (379.68,164.32) and (379.68,196.32) .. (369.68,208.32) ;
\draw   (160,270) .. controls (160,253.43) and (229.4,240) .. (315,240) .. controls (400.6,240) and (470,253.43) .. (470,270) .. controls (470,286.57) and (400.6,300) .. (315,300) .. controls (229.4,300) and (160,286.57) .. (160,270) -- cycle ;
\draw    (260.23,241.65) .. controls (270.23,254.32) and (270.23,286.32) .. (260.23,298.32) ;
\draw    (369.68,241.65) .. controls (379.68,254.32) and (379.68,286.32) .. (369.68,298.32) ;
\draw  [fill={rgb, 255:red, 0; green, 0; blue, 0 }  ,fill opacity=1 ] (220.4,269.03) .. controls (220.4,266.27) and (222.64,264.03) .. (225.4,264.03) .. controls (228.16,264.03) and (230.4,266.27) .. (230.4,269.03) .. controls (230.4,271.79) and (228.16,274.03) .. (225.4,274.03) .. controls (222.64,274.03) and (220.4,271.79) .. (220.4,269.03) -- cycle ;
\draw  [fill={rgb, 255:red, 0; green, 0; blue, 0 }  ,fill opacity=1 ] (340,95) .. controls (340,92.24) and (342.24,90) .. (345,90) .. controls (347.76,90) and (350,92.24) .. (350,95) .. controls (350,97.76) and (347.76,100) .. (345,100) .. controls (342.24,100) and (340,97.76) .. (340,95) -- cycle ;
\draw  [fill={rgb, 255:red, 0; green, 0; blue, 0 }  ,fill opacity=1 ] (340,185) .. controls (340,182.24) and (342.24,180) .. (345,180) .. controls (347.76,180) and (350,182.24) .. (350,185) .. controls (350,187.76) and (347.76,190) .. (345,190) .. controls (342.24,190) and (340,187.76) .. (340,185) -- cycle ;
\draw    (345,95) -- (345,185) ;
\draw    (345,185) -- (225.4,269.03) ;
\draw    (315,25) -- (345,95) ;
\draw  [dash pattern={on 4.5pt off 4.5pt}]  (280,100) -- (345,185) ;
\draw  [dash pattern={on 4.5pt off 4.5pt}]  (230,100) -- (345,185) ;
\draw  [dash pattern={on 4.5pt off 4.5pt}]  (345,95) -- (394.88,95.29) ;
\draw  [dash pattern={on 4.5pt off 4.5pt}]  (345,185) -- (389.6,269.1) ;
\draw  [dash pattern={on 4.5pt off 4.5pt}]  (345,185) -- (345,267.19) ;
\draw  [dash pattern={on 4.5pt off 4.5pt}]  (400,110) -- (345,185) ;
\draw  [dash pattern={on 4.5pt off 4.5pt}]  (345,95) -- (280,170) ;
\draw  [dash pattern={on 4.5pt off 4.5pt}]  (345,95) -- (230,170) ;

\draw (311,2.4) node [anchor=north west][inner sep=0.75pt]    {$x$};
\draw (97,80) node [anchor=north west][inner sep=0.75pt]    {$N( x)$};
\draw (96.8,169.11) node [anchor=north west][inner sep=0.75pt]    {$N^{2}( x)$};
\draw (95.73,249.51) node [anchor=north west][inner sep=0.75pt]    {$N^{3}( x)$};
\draw (201.29,78.13) node [anchor=north west][inner sep=0.75pt]    {$A( x,w)$};
\draw (200.78,173.75) node [anchor=north west][inner sep=0.75pt]    {$A( w,x)$};
\draw (200.6,269.23) node [anchor=north west][inner sep=0.75pt]    {$w$};
\draw (277.25,76.76) node [anchor=north west][inner sep=0.75pt]    {$B( x,w)$};
\draw (276.94,176.54) node [anchor=north west][inner sep=0.75pt]    {$B( w,x)$};
\draw (396.67,78.84) node [anchor=north west][inner sep=0.75pt]    {$C( x,w)$};
\draw (277.4,260.01) node [anchor=north west][inner sep=0.75pt]    {$C( w,x)$};
\draw (400.89,172.18) node [anchor=north west][inner sep=0.75pt]    {$D$};
\draw (401.33,262.84) node [anchor=north west][inner sep=0.75pt]    {$D'$};
\draw (353.61,176.12) node [anchor=north west][inner sep=0.75pt]    {$v^{*}$};
\draw (350.33,74.62) node [anchor=north west][inner sep=0.75pt]    {$u^{*}$};
\draw (292.44,105.9) node [anchor=north west][inner sep=0.75pt]  [font=\tiny]  {$2$};
\draw (278.67,154.57) node [anchor=north west][inner sep=0.75pt]  [font=\tiny]  {$2$};
\draw (272.06,122.68) node [anchor=north west][inner sep=0.75pt]  [font=\tiny]  {$2$};
\draw (254.5,140.23) node [anchor=north west][inner sep=0.75pt]  [font=\tiny]  {$2$};

\end{tikzpicture}}
\end{center}
\caption{Diagram of $G$. Dashed lines represent missing edges. Numbers indicate distances between the corresponding vertices.}  
\label{fig:RemptyDiagram2}
\end{figure}

We consider the set of lines $L_B=\{\li{v^*u}:u\in  A(x,w)\cup B(x,w)-u^*\}$. Observe that $\forall \, l\in L_B$, it holds that $x,w\notin l$.

\begin{claim}
The sets $\Li^x_2, \Li^x_3, L_A,L_C,L_D,L_{D'}$ and $L_B$ are pairwise disjoint.
\end{claim}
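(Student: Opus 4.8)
The plan is to reduce the statement to Claim~\ref{ClaimDisjoint1}, which already establishes that $\Li^x_2$, $\Li^x_3$, $L_A$, $L_C$, $L_D$ and $L_{D'}$ are pairwise disjoint; it then suffices to show that $L_B$ is disjoint from each of these six sets. The whole argument rests on which of the two anchor vertices $x,w$ a given line contains.

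First I would record the relevant membership facts. Every line in $\Li^x_2\cup\Li^x_3\cup L_A$ contains $x$: the lines of $\Li^x_2$ and $\Li^x_3$ are generated by $x$, and for $u\in A(x,w)$ one has $[xuw]$ because $d(x,w)=3=d(x,u)+d(u,w)$, hence $x\in\li{wu}$. Every line in $L_C\cup L_D\cup L_{D'}$ contains $w$, simply because each such line has the form $\li{wz}$ for some vertex $z$. On the other hand, no line of $L_B$ contains $x$ or $w$: this is precisely the observation recorded just before the claim, and it follows from Claim~\ref{Claimwuxv} applied to $u^*$ and $v^*=f(u^*)$, which yields $A(x,w)\cup B(x,w)-u^*\subseteq N^2(v^*)$, so that any line $\li{v^*u}\in L_B$ is generated by vertices with $d(v^*,u)=2$; together with $d(x,v^*)=2$, $d(x,u)=1$, $d(w,v^*)=1$, $d(w,u)=2$ and $d(x,w)=3$, a short check of the betweenness relations rules out both $x\in\li{v^*u}$ and $w\in\li{v^*u}$.

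Then I would conclude: since every line in the six sets of Claim~\ref{ClaimDisjoint1} contains $x$ or $w$, while no line of $L_B$ contains either of them, $L_B$ is disjoint from all six; combining this with Claim~\ref{ClaimDisjoint1} shows that the seven sets are pairwise disjoint.

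I do not anticipate any real difficulty here: this is a bookkeeping step that reuses the $x/w$-membership dichotomy already established in the proof of Claim~\ref{ClaimDisjoint1}, together with the inclusion $A(x,w)\cup B(x,w)-u^*\subseteq N^2(v^*)$ coming from Claim~\ref{Claimwuxv}. The only mildly delicate point, the verification that $x,w\notin\li{v^*u}$ for $l\in L_B$, is already available as the observation preceding the claim, so in the write-up it costs at most one sentence.
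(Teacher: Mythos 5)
Your proposal is correct and follows the same route as the paper: reduce to Claim 4.4 and then observe that every line in the six earlier sets contains $x$ or $w$, while no line of $L_B$ contains either. You merely spell out the betweenness check behind the observation $x,w\notin l$ for $l\in L_B$, which the paper records without proof just before the claim.
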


\begin{proof}
By Claim \ref{ClaimDisjoint1}, it suffices to prove that the set $L_B$ is disjoint from $\Li^x_2, \Li^x_3, L_A,L_C,L_D$ and $L_{D'}$. This is immediate from the fact that the lines in $L_B$ contain neither $x$ nor $w$.
\end{proof}

The set $\Li^x_2\cup \Li^x_3\cup  L_A\cup L_B \cup L_C \cup L_D\cup L_{D'}$ contains  
$$
|N^2(x)|+ |N^3(x)|+ |A(x,w)|+|A(x,w)|+|B(x,w)|-1 + |C(x,w)|+ |D|+ |D'| =n-2+|A(x,w)|+|D|+|D'|
$$
lines. Observe that $|A(x,w)|+|D|+|D'|\leq 1$.

\begin{claim}
$A(x,w)\neq \emptyset \Longleftrightarrow A(w,x)\neq \emptyset$.
\label{ClaimAxwAwx}
\end{claim}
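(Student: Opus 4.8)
The plan is to prove the forward implication $A(x,w)\neq\emptyset\Rightarrow A(w,x)\neq\emptyset$ directly, and then obtain the converse by exchanging the roles of $x$ and $w$; this exchange is legitimate because the standing hypotheses ($R=\emptyset$ and $d(x,w)=3$) are symmetric in $x$ and $w$, and the equalities of Claim~\ref{Claimfuncmatch} that will be used are themselves symmetric in $x$ and $w$ (the matching $\{uf(u):u\in B(x,w)\}$ is unchanged if one replaces $f$ by $f^{-1}$).

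So assume $u\in A(x,w)$. By definition $A(x,w)\subseteq N^2(w)\cap N(x)$, so $d(u,w)=2$ and there is a common neighbour $v\in N(u)\cap N(w)$. From $v\in N(w)$ and $d(x,w)=3$ we get $d(x,v)\ge 2$, while $v\in N(u)$ and $u\in N(x)$ give $d(x,v)\le 2$; hence $v\in N^2(x)\cap N(w)$. Since $N^2(x)=A(w,x)\sqcup B(w,x)\sqcup D$ with $D=N^2(x)\setminus N(w)$, we have $N^2(x)\cap N(w)=A(w,x)\cup B(w,x)$, so $v$ lies in $A(w,x)$ or in $B(w,x)$. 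I would then exclude the second possibility: by Claim~\ref{Claimfuncmatch}, $E(B(w,x),A(x,w)\cup B(x,w))=\{u'f(u'):u'\in B(x,w)\}$, and every edge of this set joins a vertex of $B(x,w)$ to a vertex of $B(w,x)$; as $A(x,w)$ and $B(x,w)$ are disjoint (both are blocks of the partition $N(x)=A(x,w)\sqcup B(x,w)\sqcup C(x,w)$), there is no edge between $A(x,w)$ and $B(w,x)$. Since $uv\in E$ and $u\in A(x,w)$, this forces $v\in A(w,x)$, and in particular $A(w,x)\neq\emptyset$.

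I do not expect a genuine obstacle here: the substantive content has already been extracted in Claim~\ref{Claimfuncmatch}, which identifies all edges incident to $B(w,x)$ as matching edges, and the argument above is just a one-step deduction from it. The only points deserving a line of justification are the identity $N^2(x)\cap N(w)=A(w,x)\cup B(w,x)$ (immediate from the partition of $N^2(x)$ together with $D\cap N(w)=\emptyset$) and the symmetry used to get the converse, which is genuine because every ingredient invoked is invariant under swapping $x$ and $w$.
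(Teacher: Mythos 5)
Your proof is correct and follows essentially the same route as the paper: pick $u\in A(x,w)$, take a common neighbour $v\in N(u)\cap N(w)$, locate $v$ in $N^2(x)\cap N(w)=A(w,x)\cup B(w,x)$, rule out $B(w,x)$ via the edge description of Claim~\ref{Claimfuncmatch}, and obtain the converse by exchanging $x$ and $w$. The paper's version is just terser, citing Claim~\ref{Claimfuncmatch} to conclude $v\in A(w,x)$ in one step.
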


\begin{proof}
Assume that $A(x,w)\neq \emptyset$. Let $u\in A(x,w)$ and $v\in N(w)\cap N(u)$. By Claim \ref{Claimfuncmatch} it holds that $v\in A(w,x)$. Reversing the roles of $x$ and $w$ we obtain the converse implication.
\end{proof}

\begin{claim}
$A(x,w)\neq\emptyset\Longrightarrow |A(x,w)|=1\,\, \land \,\, C(x,w)=\emptyset$.
\label{ClaimAxwCxw}
\end{claim}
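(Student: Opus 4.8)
The plan is to wring all available structure out of the line count just displayed, and then, under the assumption $C(x,w)\neq\emptyset$, to produce a single line lying outside $\Li(G)$, contradicting $\ell(G)\le n-1$. The displayed union $\Li^x_2\cup\Li^x_3\cup L_A\cup L_B\cup L_C\cup L_D\cup L_{D'}$ consists of $n-2+|A(x,w)|+|D|+|D'|$ pairwise distinct lines, so $\ell(G)\le n-1$ together with $A(x,w)\neq\emptyset$ forces $|A(x,w)|=1$ (the first half of the claim), $D=D'=\emptyset$, $\ell(G)=n-1$, $\Li(G)=\Li^x_2\cup\Li^x_3\cup L_A\cup L_B\cup L_C$, and $N^3(x)=\{w\}\cup C(w,x)$.

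For the second half I would argue by contradiction: suppose $u''\in C(x,w)$, and first establish that $d(v^*,u'')=3$. Since $v^*\in N(w)$ and $u''\in N^3(w)$ we have $d(v^*,u'')\in\{2,3\}$; if it were $2$, take a common neighbour $z$ of $v^*$ and $u''$. From $z\sim v^*\in N(w)$ and $z\sim u''\in N^3(w)$ one gets $d(w,z)=2$, and from $z\sim u''\in N(x)$ one gets $d(x,z)\le 2$; but $d(x,z)=2$ would place $z$ in $N^2(x)\setminus N(w)=D=\emptyset$, so $z\in N(x)$, hence $z\in N^2(w)\cap N(x)=A(x,w)\cup B(x,w)$. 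Since $z$ is adjacent to $v^*\in B(w,x)$, Claim~\ref{Claimfuncmatch} forces $z=f^{-1}(v^*)=u^*$, giving $u^*u''\in E$ with $u^*\in B(x,w)$ and $u''\in C(x,w)$, contradicting $E(B(x,w),C(x,w))=\emptyset$. Hence $d(v^*,u'')=3$.

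Now take $l=\li{v^*u''}$. By Lemma~\ref{LemmaLineDiam}, $l=\{v^*,u''\}\cup\{z:[v^*zu'']\}$, so $x\in l$ (because $d(v^*,x)+d(x,u'')=3$) but $w\notin l$ (because $d(v^*,w)+d(w,u'')=4$ and $w\notin\{v^*,u''\}$). Every line in $L_A\cup L_C$ has $w$ as an endpoint and every line in $L_B$ avoids $x$, while $L_D=L_{D'}=\emptyset$; since $l$ contains $x$ but not $w$, it must lie in $\Li^x_2\cup\Li^x_3$. It is not in $\Li^x_2$: for $v\in N^2(x)\setminus\{v^*\}$ none of $[xv^*v]$, $[v^*xv]$, $[xvv^*]$ can hold (the diameter is $3$ and $d(x,v^*)=d(x,v)=2$), so $v^*\notin\li{xv}$ although $v^*\in l$, and $\li{xv^*}=\li{wu^*}$ contains $w$ whereas $l$ does not. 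It is not in $\Li^x_3$: $u''\notin\li{xw}$ since $d(x,u'')+d(u'',w)=4$, and for $y\in C(w,x)$, Lemma~\ref{LemmaLineDiam} shows $v^*\in\li{xy}$ would force $v^*\sim y$, which is impossible because $E(B(w,x),C(w,x))=\emptyset$. Thus $l\notin\Li(G)$, a contradiction, and therefore $C(x,w)=\emptyset$.

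The main obstacle is the intermediate equality $d(v^*,u'')=3$: it is exactly what turns $\li{v^*u''}$ into a line governed by Lemma~\ref{LemmaLineDiam}, and pinning it down is the step where the emptiness of $D$, the matching structure of Claim~\ref{Claimfuncmatch}, and $E(B(x,w),C(x,w))=\emptyset$ all have to be used together. Once that is in hand, excluding $l$ from each of the listed families is routine distance bookkeeping.
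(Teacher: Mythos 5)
Your proof is correct and follows the same overall strategy as the paper's: use the displayed line count to force $|A(x,w)|=1$, $D=D'=\emptyset$, $\ell(G)=n-1$ and $\Li(G)=\Li^x_2\cup\Li^x_3\cup L_A\cup L_B\cup L_C$, and then exhibit $\li{v^*u''}$ with $u''\in C(x,w)$ as a line lying outside this union. The one genuine divergence is the subcase $d(v^*,u'')=2$: the paper lets this case run, observes $x\notin\li{v^*u''}$, deduces that the line would have to be $\li{wu''}\in L_C$, and gets a contradiction from $\Li^{u''}_2\cap\Li^{u''}_3=\emptyset$; you instead rule out $d(v^*,u'')=2$ up front by showing that a common neighbour of $v^*$ and $u''$ would have to lie in $A(x,w)\cup B(x,w)$ (using $D=\emptyset$), hence equal $u^*$ by the matching of Claim~\ref{Claimfuncmatch}, contradicting $E(B(x,w),C(x,w))=\emptyset$. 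Both dispatches are valid; yours avoids invoking the hypothesis $R=\emptyset$ at the vertex $u''$ at the cost of a slightly longer structural argument, and your remaining bookkeeping in the $d(v^*,u'')=3$ case (excluding the line from $L_A\cup L_C$ via $w\notin l$, from $L_B$ via $x\in l$, and from $\Li^x_2\cup\Li^x_3$ via Lemmas~\ref{LemmaLineDiam} and \ref{LemmaNi}) matches the paper's in substance.
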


\begin{proof}
We assume that $A(x,w)\neq \emptyset$. It follows that $|A(x,w)|=1$, $\ell(G)=n-1$, $D=D'=\emptyset$ and $\Li(G)=\Li^x_2\cup \Li^x_3\cup  L_A\cup L_B \cup L_C$. We will now prove that $C(x,w)=\emptyset$ by contradiction. Suppose that $C(x,w)\neq \emptyset$ and consider the line $l=\li{v^*u}$ with $u\in C(x,w)$. Since $\Li^{v^*}_2\cap \Li^{v^*}_3=\emptyset$ and $v^*u\notin E$, the line $l$ does not belong to $L_B$. If $d(v^*,u)=3$, since $N(v^*)\cap N^3(x)=\{w\}$, it must hold that $l\cap N^3(x)=\emptyset$ and therefore $l\notin \Li^x_3\cup L_A\cup L_C$. Given that $w\in \li{xv^*}\setminus l$ and $v^*\in l$, we have that $l\notin \Li^x_2$, hence $l\notin \Li(G)$. This is a contradiction, thus we conclude that $d(v^*,u)=2$. This implies that $x\notin l$, therefore $\li{v^*u}=l=\li{wu}\in L_C$. The line $l$ belongs to $\Li^u_2\cap \Li ^u_3=\emptyset$, which is a contradiction. 
\end{proof}

\begin{claim}
$A(x,w)=\emptyset$.
\end{claim}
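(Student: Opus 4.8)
The plan is a proof by contradiction: assume $A(x,w)\neq\emptyset$, squeeze a completely rigid description of $G$ out of the claims already proved (and their images under the symmetry $x\leftrightarrow w$), and then exhibit a single line that lies in $\Li^x_2$ while, by the very definition of $A(x,w)$, it cannot lie in $\Li^x_2$.

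For the structural part I would argue as follows. By Claim \ref{ClaimAxwAwx} we also have $A(w,x)\neq\emptyset$, and by Claim \ref{ClaimAxwCxw} together with the inequality $|A(x,w)|+|D|+|D'|\le 1$ recorded just before it, we get $|A(x,w)|=1$, $C(x,w)=\emptyset$ and $D=D'=\emptyset$. Everything in this section uses $x$ and $w$ only through $d(x,w)=3$ and the standing hypothesis $R=\emptyset$, so the same statements hold with the roles of $x$ and $w$ interchanged; in particular $|A(w,x)|=1$, $C(w,x)=\emptyset$ and $N^2(w)\setminus N(x)=\emptyset$. Writing $A(x,w)=\{u_0\}$ and $A(w,x)=\{v_0\}$, these facts say exactly that $N^3(x)=\{w\}$ (so $V=\{x\}\sqcup N(x)\sqcup N^2(x)\sqcup\{w\}$), that $N(x)=\{u_0\}\sqcup B(x,w)\subseteq N^2(w)$, and that $N^2(x)=\{v_0\}\sqcup B(w,x)=N(w)$. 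Running the argument in the proof of Claim \ref{ClaimAxwAwx} for $u_0$ shows that the common neighbour of $u_0$ and $w$ lies in $A(w,x)$, hence $u_0v_0\in E$; and the paragraph following Claim \ref{Claimfuncmatch} gives $B(w,x)\subseteq N^2(u_0)$ and, symmetrically, $B(x,w)\subseteq N^2(v_0)$.

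With this in hand I would compute $\overline{wu_0}$ directly. From $d(x,u_0)=1$, $d(u_0,w)=2$ and $d(x,w)=3$ we get $[wu_0x]$, and from $d(v_0,w)=d(u_0,v_0)=1$ we get $[wv_0u_0]$, so $x,v_0\in\overline{wu_0}$. Conversely, every $u\in N(x)-u_0$ satisfies $d(u,w)=2$, and every $v\in B(w,x)=N^2(x)-v_0$ satisfies $d(v,w)=1$ and $d(v,u_0)=2$; a routine inspection of the three betweenness relations then shows that no such $u$ or $v$ lies on $\overline{wu_0}$ (each relation would force a distance of $4$, a distance of $0$, or equate two different small distances). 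Hence $\overline{wu_0}=\{x,u_0,v_0,w\}$, and by the mirror computation (swapping $x$ with $w$ and $u_0$ with $v_0$) also $\overline{xv_0}=\{x,u_0,v_0,w\}$. Therefore $\overline{wu_0}=\overline{xv_0}$, which lies in $\Li^x_2$ since $v_0\in N^2(x)$; this contradicts $u_0\in A(x,w)$, which means precisely that $\overline{wu_0}\notin\Li^x_2$. Thus $A(x,w)=\emptyset$.

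The only step I expect to need real care is making the $x\leftrightarrow w$ symmetry explicit, i.e.\ checking that the ``mirrored'' forms of Claims \ref{ClaimAxwAwx}--\ref{ClaimAxwCxw} and of the structural remarks after Claim \ref{Claimfuncmatch} are legitimately available; once that is granted, the structural reduction and the two betweenness computations are entirely routine.
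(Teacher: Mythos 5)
Your proposal is correct and follows essentially the same route as the paper: reduce via Claims \ref{ClaimAxwAwx} and \ref{ClaimAxwCxw} (and their $x\leftrightarrow w$ mirrors) to $|A(x,w)|=|A(w,x)|=1$ and $C(x,w)=C(w,x)=D=D'=\emptyset$, then show $\li{wu_0}=\{x,u_0,v_0,w\}=\li{xv_0}\in\Li^x_2$, contradicting the definition of $A(x,w)$. The symmetry step you flag as needing care is indeed legitimate and is used explicitly in the paper ("reversing the roles of $x$ and $w$"); your only addition is spelling out the betweenness computations that the paper leaves implicit.
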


\begin{proof}
For the sake of contradiction, we suppose that $A(x,w)\neq \emptyset$. By Claim \ref{ClaimAxwCxw} we obtain that $|A(x,w)|=1$ and $C(x,w)=\emptyset$. By Claim \ref{ClaimAxwAwx}, we also have that $A(w,x)\neq \emptyset$. Reversing the roles of $x$ and $w$ in Claim \ref{ClaimAxwCxw}, we obtain that $|A(w,x)|=1$ and $C(w,x)=\emptyset$. Since $A(x,w)\neq \emptyset$, we also have that $D=D'=\emptyset$. Consider $\hat{u}\in A(x,w)$ and $\hat{v}\in N(w)\cap N(\hat{u})$. By Claim \ref{Claimfuncmatch}, the vertex $\hat{v}$ must belong to $ A(w,x)$. Recall that $B(x,w)\subseteq N^2(\hat{v})$ and $B(w,x)\subseteq N^2(\hat{u})$. As $C(x,w)=C(w,x)=D=D'=\emptyset$ and $|A(x,w)|=|A(w,x)|=1$, we have that $\li{w\hat{u}}=\{x,\hat{u},\hat{v},w\}=\li{x\hat{v}}\in\Li^x_2$, leading to a contradiction.
\end{proof}

By Claim \ref{ClaimAxwAwx}, we also conclude that $A(w,x)=\emptyset$. The set $\Li^x_2\cup \Li^x_3\cup L_B \cup L_C\cup L_D\cup L_{D'}$ contains 
$$
|N^2(x)|+ |N^3(x)|+|B(x,w)|-1 + |C(x,w)|+ |D|+ |D'| =n-2+|D|+|D'|
$$
lines. Since $E(B(w,x),D')=\emptyset$, we have that $D'\neq \emptyset \Longrightarrow D\neq \emptyset \Longrightarrow \ell(G)\geq n$. It follows that $D'=\emptyset$.   

\begin{claim}
 $D=\emptyset$.
\end{claim}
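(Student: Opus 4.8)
The plan is to argue by contradiction. Suppose $D\neq\emptyset$. Since $D'=\emptyset$ was just established, the set $\Li^x_2\cup \Li^x_3\cup L_B\cup L_C\cup L_D$ already contains $n-2+|D|$ distinct lines, so $\ell(G)\le n-1$ together with $D\neq\emptyset$ forces $|D|=1$, $\ell(G)=n-1$, and
$$
\Li(G)=\Li^x_2\cup \Li^x_3\cup L_B\cup L_C\cup L_D .
$$
Write $D=\{v_0\}$, so that $N^2(x)=B(w,x)+v_0$ with $v_0\notin N(w)$, and recall that $L_B=\{\li{v^*u}:u\in B(x,w)-u^*\}$.

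The key point is what this exhaustive description forbids. Every line of $G$ that does not contain $x$ must lie in $L_B\cup L_C\cup L_D$, since all lines in $\Li^x_2\cup\Li^x_3$ contain $x$ and $L_A=\emptyset$; moreover, every line that contains neither $x$ nor $w$ must lie in $L_B$, because every line in $L_C\cup L_D$ contains $w$, and hence such a line must pass through $v^*$. So it suffices to produce a single line avoiding $x$, $w$ and $v^*$. I first record the elementary facts that will be used throughout: $d(x,v_0)=2$; $v_0\notin N(w)$, so $d(w,v_0)\in\{2,3\}$; and, combining these with $d(x,w)=3$, both $w\notin\li{xv_0}$ and $x\notin\li{wv_0}$.

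To build the forbidden line I choose $u'\in N(x)\cap N(v_0)$ (possible since $v_0\in N^2(x)$); because $A(x,w)=\emptyset$ we have $u'\in B(x,w)$ or $u'\in C(x,w)$. I then run a short case analysis on this alternative and on whether $d(w,v_0)=2$ or $d(w,v_0)=3$. In each case I pick an appropriate pair of vertices — drawn from $v_0$, $u'$, $f(u')$ or a vertex of $C(x,w)$, together with $v^*$ — and show that the line they generate (i) omits $x$, typically because two of the chosen vertices lie in a common $N^i(x)$ (Lemma~\ref{Lemmadxdy}); (ii) omits $w$, by distance arithmetic using $v_0\notin N(w)$ and the geodesic description of distance-three lines (Lemma~\ref{LemmaLineDiam}); and (iii) omits $v^*$, by a distance/adjacency argument relying on the inclusion $B(x,w)-u^*\subseteq N^2(v^*)$ and the matching structure of $E(B(x,w),B(w,x))$ from Claims~\ref{Claimwuxv} and~\ref{Claimfuncmatch}, together with the hypothesis $\Li^z_2\cap\Li^z_3=\emptyset$ for every vertex $z$. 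Any such line contradicts the structural observation above, so no such $v_0$ exists and $D=\emptyset$.

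The main obstacle is clause (iii): the lines in $L_B$ really do pass through $v^*$, so avoiding $v^*$ is not automatic, and it is precisely this requirement that forces the split according to $d(w,v_0)\in\{2,3\}$ and according to whether the auxiliary neighbour $u'$ lies in $B(x,w)$ or in $C(x,w)$; clauses (i) and (ii) are, by contrast, routine once $v_0\notin N(w)$ is in hand. One could also attempt to shortcut the analysis by invoking the $x\leftrightarrow w$ symmetry available here — it puts $N^2(w)\setminus N(x)$ in the role of $D$ — but a direct case analysis seems cleaner.
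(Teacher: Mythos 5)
Your reduction is sound: every line in $\Li^x_2\cup\Li^x_3$ contains $x$, every line in $L_C\cup L_D$ contains $w$, and every line in $L_B$ contains $v^*$, so exhibiting a line disjoint from $\{x,w,v^*\}$ would indeed contradict $\Li(G)=\Li^x_2\cup\Li^x_3\cup L_B\cup L_C\cup L_D$. But the proposal stops there. You never name the generating pair in any of your four cases, and the verifications of clauses (i)--(iii) --- in particular clause (iii), which you yourself flag as the main obstacle --- are only promised, not carried out. Whether a line avoiding $x$, $w$ and $v^*$ exists in each case is precisely the content to be proved, so as written this is a plan rather than a proof. Moreover it is not clear the plan closes: in the case $u'\in N(v_0)\cap B(x,w)$, the natural candidates do not fit your template ($\li{v_0u'}$ contains $x$ since $[v_0u'x]$; $\li{v_0f(u')}$ contains $w$ when $d(w,v_0)=3$, and controlling $v^*$ on it would need adjacency information inside $B(w,x)$ that is only established in the subsequent claim).

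The paper's argument does not follow your template at all: both of its contradictions come from lines that \emph{do} contain $x$. It first shows $N(v_0)\cap B(x,w)=\emptyset$ by considering $\li{xu'}$ for $u'\in N(v_0)\cap B(x,w)$: this line contains the two distinct vertices $f(u')$ and $v_0$ of $N^2(x)$, so by Lemma \ref{LemmaNi} it is not in $\Li^x_2$; since it contains $x$ it is in none of $L_B,L_C,L_D$; hence it lies in $\Li^x_3$ and, containing $w$, must equal $\li{xw}$, contradicting $v_0\notin\li{xw}$ (as $v_0w\notin E$). Then, taking $u\in N(v_0)\cap N(x)\subseteq C(x,w)$, it considers $\li{uu^*}$, shows it meets no vertex of $N^3(x)$ and therefore must equal $\li{xv_0}\in\Li^x_2$, and deduces $d(v_0,u^*)=3$, $v^*v_0\notin E$, $d(v^*,u)=3$ and finally $v^*\in\li{uu^*}\setminus\li{xv_0}$, a contradiction with Lemma \ref{LemmaNi}. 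To repair your proposal you must either supply the explicit lines and the $v^*$-avoidance arguments for each of your cases, or switch to contradictions of this second kind, driven by lines through $x$ that Lemma \ref{LemmaNi} excludes from $\Li^x_2\cup\Li^x_3$.
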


\begin{proof}
By contradiction, suppose that $D\neq \emptyset$. It follows that $|D|=1$, $\ell(G)=n-1$ and $\Li(G)=\Li^x_2\cup  \Li^x_3\cup L_B \cup L_C \cup L_D$. Let $D=\{v\}$. We will first show that $N(v)\cap B(x,w)=\emptyset$ by contradiction. If there exists $u\in N(v)\cap B(x,w)$ we consider the line $l=\li{xu}$. As $x\in l$, we have that $l\notin  L_B \cup L_C\cup L_D$, and since $f(u),v\in l$, we obtain that $l\notin \Li^x_2$. Hence the line $l$ must belong to $\Li^x_3$, and since $w\in l$, we have that $\li{xu}=l=\li{xw}$. The vertex $v$ belongs to $\li{xu}\setminus \li{xw}$, which is a contradiction, thus we conclude  $N(v)\cap B(x,w)=\emptyset$. Let $u\in N(x)\cap N(v)$, therefore, $u\in C(x,w)$. Consider the line $l=\li{uu^*}$. We have that $uu^*\notin E$, hence $l\cap N^3(x)=\emptyset$. Since $x\in l$, the line $l$ must be equal to $\li{xv}\in \Li^x_2$. It follows that $v\in \li{uu^*}$, thus $d(v,u^*)=3$ and $v^*v\notin E$. As $uu^*\notin E$, we obtain that $d(v^*,u)=3$, hence $v^*\in \li{uu^*}=\li{xv}$, yielding a contradiction.
\end{proof}

Given that $E(B(x,w),C(x,w))=E(B(w,x),C(x,w))=\emptyset$, necessarily, $C(x,w)=\emptyset$. Reversing the roles of $x$ and $w$, we conclude that $C(w,x)=\emptyset$. As $L_C=L_D=L_{D'}=\emptyset$, the set $\Li^x_2\cup \Li^x_3\cup L_B$ contains 
$$
|N^2(x)|+ |N^3(x)|+|B(x,w)|-1 =n-2
$$
lines.

\begin{claim}
$G[B(w,x)]$ is a complete graph.
\end{claim}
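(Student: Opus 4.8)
The plan is to argue by contradiction: if $G[B(w,x)]$ is not complete, then the whole line set $\Li(G)$ becomes rigidly determined, and this rigidity, applied for \emph{every} choice of the base vertex, will force $G[B(w,x)]$ to be complete after all. If $|B(w,x)|\le 1$ the claim is trivial, so assume $|B(w,x)|=|B(x,w)|\ge 2$; recall that at this stage $N(x)=B(x,w)$, $N^2(x)=N(w)=B(w,x)$, $N^3(x)=\{w\}$ and, symmetrically, $N^2(w)=B(x,w)$, $N^3(w)=\{x\}$, that $f:B(x,w)\to B(w,x)$ is a bijection, and that the only edges between $B(x,w)$ and $B(w,x)$ are the pairs $uf(u)$. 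Suppose $v_1v_2\notin E$ for some distinct $v_1,v_2\in B(w,x)$. Since $v_1,v_2\in N(w)$ we have $[v_1wv_2]$, so $w\in\li{v_1v_2}$, whereas $d(x,v_1)=d(x,v_2)=2$ gives $x\notin\li{v_1v_2}$ by Lemma \ref{Lemmadxdy}. A direct betweenness check shows that no line $\li{v^*u}$ in $L_B$ (here $d(v^*,u)=2$, $d(w,v^*)=1$, $d(w,u)=2$) contains $w$; hence $\li{v_1v_2}$ belongs to none of $\Li^x_2$, $\Li^x_3$, $L_B$, so $\ell(G)=n-1$ and $\Li(G)=\Li^x_2\cup\Li^x_3\cup L_B\cup\{\li{v_1v_2}\}$.

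Now observe that the same reasoning applies to any choice of $u^*\in B(x,w)$ with $v^*=f(u^*)$: every claim of this section used above holds for such a choice, and $\li{v_1v_2}\notin\Li^x_2\cup\Li^x_3\cup L_B$ for the same reason. Fix any such $u^*$. For each $v\in B(w,x)-v^*$ we have $d(u^*,v)=2$ (by Claim \ref{Claimwuxv} with the roles of $x$ and $w$ exchanged), and a short betweenness computation from $d(x,u^*)=1$ and $d(w,u^*)=2$ shows that neither $x$ nor $w$ lies on $\li{u^*v}$. Since the only members of $\Li(G)$ avoiding both $x$ and $w$ are those in $L_B$, we get $\li{u^*v}=\li{v^*u}$ for some $u\in B(x,w)-u^*$. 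In particular $v^*\in\li{u^*v}$, and $d(u^*,v^*)=1$ together with $d(u^*,v)=2$ forces $d(v^*,v)\in\{1,3\}$ by the definition of betweenness; as $v^*,v\in N(w)$ we have $d(v^*,v)\le 2$, so $v^*v\in E$. Thus $v^*$ is adjacent to every vertex of $B(w,x)-v^*$.

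Since $u^*$ ranged over all of $B(x,w)$ and $f$ is a bijection onto $B(w,x)$, every vertex of $B(w,x)$ is adjacent to all the others; that is, $G[B(w,x)]$ is complete, contradicting our assumption.

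The main obstacle is conceptual rather than computational: one must realize that the failure of completeness pins down $\Li(G)$ exactly, and that this holds simultaneously for all base vertices $u^*\in B(x,w)$---which is precisely what upgrades the adjacency obtained for one $v^*$ to completeness of the whole clique candidate $B(w,x)$. The betweenness verifications involved are of the same routine type used repeatedly throughout this section.
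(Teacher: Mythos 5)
Your proof is correct and follows essentially the same route as the paper: the non-edge in $B(w,x)$ supplies the one extra line that forces $\Li(G)=\Li^x_2\cup\Li^x_3\cup L_B\cup\{\li{v_1v_2}\}$, and the contradiction then comes from the line joining $u^*$ to a vertex $v\in B(w,x)-v^*$, which avoids $x$ and $w$. The only (cosmetic) difference is that the paper first re-chooses $u^*$ so that $v^*$ is an endpoint of the non-edge and shows $\li{vu^*}$ lies in no class at all, whereas you keep the non-edge arbitrary and instead force $\li{u^*v}\in L_B$, extracting $v^*v\in E$ for every $v$ — both are the same computation on the same line.
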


\begin{proof}
By contradiction, suppose that  $G[B(w,x)]$ is not a complete graph. We may assume that there exists $v\in B(w,x)-v^*$ such that $v^*v\notin E$. Consider the line $l_1=\li{v^*v}$. Since $x\notin l_1$ and $w\in l_1$, the line $l_1$ does not belong to $\Li^x_2\cup \Li^x_3\cup L_B$, therefore $\ell(G)=n-1$ and $\Li(G)=\Li^x_2\cup \Li^x_3\cup L_B\cup \{l_1\}$. We now consider the line $l_2=\li{vu^*}$. Given that $d(v,u^*)=2$ and $x,v^*\in N^2(v)$, we have that $x,v^*\notin l_2$. It follows that $l_2\notin \Li(G)$, which is a contradiction.
\end{proof}

Reversing the roles of $x$ and $w$, we also obtain that $G[B(x,w)]$ is a complete graph. Since it holds that $E(B(x,w),B(w,x))=\{uf(u): u \in B(x,w)\}$, we conclude that $G$ is isomorphic to $M'_{2p}$ with $p=|B(x,w)|+1=|B(w,x)|+1$. As $\Li^x_2\cap \Li^x_3=\emptyset$, the set $\Li(G)$ contains at least two lines. Since the graph $M'_{4}$, the path of length three, has only one line, we obtain that $p\geq 3$, therefore $G$ is isomorphic to $M'_{6}$ or $M'_{8}$. 

This concludes the proof of Proposition \ref{PropRempty}.

\section{Conclusion}
Surprisingly, the core ideas of our proof can be generalized to graphs of arbitrary diameter, though the strength of the argument becomes weaker. The argument of Section \ref{sec:Rempty} can be generalized to obtain the following. Consider a graph $G=(V,E)$ of diameter $k\geq 3$ such that for every vertex $x$ and distinct $i,j\in \{2,3,...,k\}$, it holds that $\Li^x_i\cap \Li^x_j = \emptyset$. Then $G$ satisfies that $\ell(G)\geq |V|-2$, with hope of finding $|V|$ distinct lines when $k\geq 4$. However, this hypothesis appears to be too strong, rendering the other case difficult as the diameter increases. In a graph of diameter $k\geq 3$, the argument of Section \ref{sec:Rnonempty} can be generalized to repair line repetitions in $\Li ^x_{k-1}\cap \Li^x_{k}$ for some vertex $x$, counting lines of the type $\li{yu}$ with $y\in N^k(x)$ and $u\in N(x)$. Nonetheless, when $k\geq 4$, this argument is not enough by itself to count a significant number of lines.

\section*{Acknowledgements}
Both authors were supported by Basal program FB210005, ANID, Chile. Villarroel-Sepúlveda's postgraduate studies were financed by ANID--Subdirección de Capital Humano / Magíster Nacional / 2025 - 22251816.

\bibliographystyle{plain}

\bibliography{biblio}

\end{document}